\renewcommand{\mathcal}{\EuScript}
\author{Samir Canning}
\author{Dan Petersen}
\author{Olivier Ta\"ibi}
\title{The low degree cohomology of compactifications of $A_g$}
\renewcommand{\tilde}{\widetilde}
\newcommand{\Q}{\mathbf Q}
\newcommand{\Z}{\mathbf Z}
\newcommand{\R}{\mathbf R}
\newcommand{\C}{\mathbf C}
\newcommand{\A}{\mathbf A}
\newcommand{\V}{\mathbf V}
\newcommand{\G}{\mathbf G}
\newcommand{\Mbar}{\overline{M}}
\newcommand{\Abar}{\overline{A}}
\newcommand{\Xbar}{\overline{X}}
\newcommand{\IH}{\mathit{I\!H}}
\newcommand{\IC}{\mathrm{IC}}
\newcommand{\Sat}{\mathit{Sat}}
\newcommand{\Std}{\mathrm{Std}}
\newcommand{\std}{\mathrm{std}}
\DeclareMathOperator{\Spec}{Spec}
\DeclareMathOperator{\Sym}{Sym}
\DeclareMathOperator{\gr}{gr}
\DeclareMathOperator{\Res}{Res}
\DeclareMathOperator{\Hom}{Hom}
\newcommand{\rt}{\mathrm{rt}}
\newcommand{\aff}{\mathrm{aff}}
\DeclareMathOperator{\GL}{GL}
\DeclareMathOperator{\PGL}{PGL}
\DeclareMathOperator{\SL}{SL}
\DeclareMathOperator{\Sp}{Sp}
\DeclareMathOperator{\SO}{SO}
\DeclareMathOperator{\GSp}{GSp}
\DeclareMathOperator{\Spin}{Spin}
\DeclareMathOperator{\GSpin}{GSpin}
\newcommand{\spin}{\mathrm{spin}}
\DeclareMathOperator{\Frob}{Frob}
\newcommand{\cpsc}{c_{p,\mathrm{sc}}}
\newcommand{\Qp}{\mathbf{Q}_p}
\newcommand{\Qell}{\mathbf{Q}_\ell}
\newcommand{\Qellbar}{\overline{\mathbf{Q}}_\ell}
\newcommand{\Tcal}{\mathcal{T}}
\newcommand{\tPsidut}{\tilde{\Psi}_{\mathrm{disc}}^{\mathrm{unr}, \tau}}
\newcommand{\Zhat}{\widehat{\Z}}
\newcommand{\gfrak}{\mathfrak{g}}
\newcommand{\kfrak}{\mathfrak{k}}
\newcommand{\pfrak}{\mathfrak{p}}
\newcommand{\ol}[1]{\overline{#1}}
\newtheorem{thm}{Theorem}[section]
\newtheorem{cor}[thm]{Corollary}
\newtheorem{lem}[thm]{Lemma}
\newtheorem{prop}[thm]{Proposition}
\theoremstyle{definition}
\newtheorem{defn}[thm]{Definition}
\theoremstyle{remark}
\newtheorem{rem}[thm]{Remark}
\numberwithin{equation}{section}
\begin{document}

\begin{abstract}
    We compute the low degree $\ell$-adic intersection cohomology of symplectic local systems on $A_g^{\Sat}$, the Satake compactification of the moduli space $A_g$ of principally polarized abelian varieties. We prove that only a small finite list of irreducible Galois representations can appear in the low degree cohomology of any nonsingular toroidal compactification of $A_g$ or $X_{g,s}$, the $s$-fold fiber product of the universal abelian variety. We give several applications, including to spaces of holomorphic forms on toroidal compactifications and to the cohomology of the interior. In particular, we give a complete characterization of when $H^\bullet(X_{g,s},\Qell)$ and $H^\bullet(\Xbar_{g,s},\Qell)$ are Tate, which is independent of the choice of toroidal compactification.
\end{abstract}

\subjclass[2020]{14K10, 11F70, 11F75}

 \maketitle

\section{Introduction}

\subsection{Motivic predictions}
There is a web of conjectures, falling under the umbrella of the Langlands program and the conjectures of Fontaine--Mazur, making predictions about the cohomology of varieties, or more generally Deligne--Mumford stacks.
Namely, one expects there to be natural bijections between the isomorphism classes of the following objects:
\begin{itemize}
\item (certain) automorphic representations for $\GL_{n,\Q}$, for varying integers $n$,
\item (certain) representations of $\mathrm{Gal}(\overline \Q/\Q)$ over $\Qellbar$, for any prime number $\ell$,
\item pure motives\footnote{More precisely motives with coefficients in $\overline{\Q}$, i.e.\ motives with an action of a number field $E \subset \overline{\Q}$ that is allowed to grow.} over $\Q$.
\end{itemize}
It is a delicate matter to describe precisely the automorphic representations or Galois representations that should arise from geometry, but the geometry should impose significant restrictions on the automorphic side.
For example, if $X$ is a smooth proper Deligne--Mumford stack over $\mathrm{Spec}(\Z)$, the Galois representation (or ``motive'') given by the $\ell$-adic cohomology $H^\bullet(X,\Q_\ell)$ will be unramified at all primes, which is a very rare property. An everywhere unramified Galois representation should conjecturally correspond to an automorphic representation that is \emph{spherical} at all primes, i.e.\ a representation of \emph{level one}, or \emph{conductor one}.

Recent work of Chenevier, Renard, Lannes, and Ta\"ibi \cite{chenevierrenard,chenevierlannes,taibi,cheneviertaibi} has shown that level one automorphic representations of small ``motivic weight'' can be completely classified, meaning that the Galois representations that should contribute to the low-degree cohomology of smooth proper Deligne--Mumford stacks over $\Spec \Z$ are classified as well.
Up to twisting by $|\det|^w$ for some $w \in \tfrac{1}{2} \Z$ we can reduce to automorphic representations for $\PGL_{n,\Q}$.
Chenevier--Lannes \cite[Theorem F]{chenevierlannes} obtain a complete list of all algebraic cuspidal automorphic representations of level one whose motivic weight is at most $22$; in particular, apart from the trivial representation for $\PGL_{1,\Q}$ there is \emph{no} level one algebraic cuspidal automorphic representation for $\PGL_{n,\Q}$ of motivic weight less than $11$, for any $n$.
For odd motivic weights this was already known, by an argument of Mestre \cite[III, Remarque 1]{mestre}.
See also \cite{cheneviertaibi} for a simpler proof of \cite[Theorem F]{chenevierlannes}, and partial results in motivic weights 23 and 24.

\begin{thm}[Chenevier--Lannes]\label{thm:ChenevierLannes}
    There are $11$ algebraic cuspidal automorphic representations for $\PGL_{n,\Q}$ of level one of motivic weight at most $22$:
    \[
    1,\ \Delta_{11},\ \Delta_{15},\ \Delta_{17},\ \Delta_{19},\ \Delta_{19,7},\ \Delta_{21},\ \Delta_{21,5},\ \Delta_{21,9},\ \Delta_{21,13},\ \Sym^2 \Delta_{11}.
    \]
\end{thm}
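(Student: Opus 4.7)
The plan is to translate the classification problem into one about the discrete automorphic spectrum of classical groups via Arthur's endoscopic classification. An algebraic cuspidal automorphic representation of $\PGL_{n,\Q}$ is conjugate self-dual, hence by Arthur's results arises as a functorial transfer from the discrete spectrum of a classical group: an odd orthogonal group $\SO_{2m+1}$ in the symplectic case $n=2m$, and a symplectic group $\Sp_{2m}$ or even orthogonal group $\SO_{2m}$ in the orthogonal cases. The constraints of level one and motivic weight $\leq 22$ bound both the rank of these classical groups and the infinitesimal character of their archimedean component, so only finitely many classical groups need to be examined.

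First I would set up the combinatorics of level-one Arthur parameters. Each such parameter is a formal sum of self-dual cuspidal representations of various $\GL_k$, subject to explicit sign and multiplicity conditions, and carries a prescribed infinitesimal character. The enumeration then becomes a bootstrap on motivic weight: assuming all self-dual cuspidals of smaller weight have been classified, for each classical group of small rank one lists all Arthur parameters realizing a given archimedean infinitesimal character, and Arthur's multiplicity formula decides which parameters contribute to the discrete spectrum. Isolating the genuinely new contributions, i.e.\ those which are not transferred up from a smaller endoscopic group, yields candidate new self-dual cuspidals on some $\PGL_n$.

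The core computational step is to evaluate the stable trace formula at level one on each relevant classical group, tested against a pseudocoefficient of a discrete series with the prescribed infinitesimal character. In the cases that appear in the final list, this reduces to known dimension formulas: the spaces of elliptic cusp forms of weights $12, 16, 18, 20, 22$ account for $\Delta_{11}, \Delta_{15}, \Delta_{17}, \Delta_{19}, \Delta_{21}$; the scalar and vector-valued Siegel cusp forms of genus two of suitable weights produce $\Delta_{19,7}$ and the three $\Delta_{21,\ast}$; and $\Sym^2 \Delta_{11}$ arises as the Gelbart--Jacquet functorial transfer. Mestre's argument \cite{mestre} disposes of odd motivic weights essentially for free, so the main work is concentrated in the even weights.

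The main obstacle is the explicit evaluation of the geometric side of the stable trace formula on $\Sp_{2m}$ and $\SO_{2m}$ as $m$ ranges up to roughly $11$: one must control the elliptic and parabolic contributions uniformly in the infinitesimal character, which requires a careful bookkeeping of endoscopic transfer factors and of local orbital integrals at every finite prime, orchestrated by the recursive structure of Arthur's multiplicity formula. This is precisely what is carried out in \cite{chenevierrenard, taibi}, reassembled in \cite{chenevierlannes}, and given a more streamlined treatment in \cite{cheneviertaibi}; upon showing that no further Arthur parameter of weight $\leq 22$ realizes a new cuspidal contribution beyond the eleven above, the theorem follows.
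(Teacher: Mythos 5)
First, a point of comparison: the paper does not prove this statement at all --- it is imported verbatim as \cite[Theorem F]{chenevierlannes} (with a streamlined proof in \cite{cheneviertaibi}), so there is no internal argument to measure yours against. Judged as a sketch of the Chenevier--Lannes proof, your outline correctly identifies the role of Arthur's classification and of the trace-formula computations of \cite{chenevierrenard,taibi}, but it contains two genuine gaps. The first is the opening reduction: an algebraic cuspidal automorphic representation of $\PGL_{n,\Q}$ is \emph{not} automatically self-dual (and ``conjugate self-dual'' is not the relevant notion over $\Q$). The theorem classifies \emph{all} level-one algebraic cuspidal representations of motivic weight at most $22$; that each of the eleven happens to be self-dual is a conclusion, not a hypothesis. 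Your passage to the discrete spectra of classical groups applies only once self-duality is known, so non-self-dual $\pi$ are never ruled out by your argument.

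The second gap is that you confine the explicit-formula argument of Mestre to odd motivic weights and claim the even weights are handled by the stable trace formula on classical groups of rank up to roughly $11$. In fact the analytic input --- the Riemann--Weil explicit formula applied to the Rankin--Selberg $L$-functions $L(s,\pi\times\pi')$, in the quantitative form developed by Chenevier--Lannes and sharpened in \cite{cheneviertaibi} --- is the decisive step in \emph{all} weights: it is what bounds $n$, constrains the archimedean infinitesimal character, and cuts the a priori infinite list of possible Hodge weights down to the finitely many regular cases that the trace-formula computations can then confirm or eliminate. Without it, your assertion that ``only finitely many classical groups need to be examined'' is unjustified (a fixed motivic weight does not bound $n$ unless the infinitesimal character is regular, and regularity is not given), and the recursive enumeration you describe does not terminate. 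As written, the proposal therefore does not close.
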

Each automorphic representation in Theorem \ref{thm:ChenevierLannes} has an associated $\ell$-adic Galois representation. 
To the automorphic representation $\Delta_w$ is associated the two-dimensional Galois representation $S_{\ell}\langle w+1\rangle$ attached to weight $w+1$ cusp forms for $\SL_2(\Z)$; to the automorphic representation $\Delta_{w_1,w_2}$ is similarly associated a $4$-dimensional Galois representation attached to vector valued Siegel cusp forms for $\Sp_4(\Z)$ of stable type (it factors through $\GSp_4(\Qell)$), and to $\Sym^2\Delta_{11}$ is associated $\Sym^2 S_{\ell}\langle 12 \rangle$.

Given a smooth proper Deligne--Mumford stack $X$ over $\mathrm{Spec}(\Z)$ and $k \leq 22$, it is therefore a natural problem to verify that the only Galois representations appearing in $H^k(X,\Q_{\ell})$ are Tate twists of the ones associated to the automorphic representations in Theorem \ref{thm:ChenevierLannes}.
Smooth and proper stacks over $\mathrm{Spec}(\Z)$ are rare objects. The ones which first spring to mind (e.g.~flag varieties) have no interesting Galois representations in their cohomology at all.
A highly nontrivial example is $\Mbar_{g,n}$, the moduli space of stable curves of genus $g$ with $n$ markings, which was shown to be smooth and proper over $\mathrm{Spec}(\Z)$ by Deligne and Mumford \cite{dm69}.
Many of the predictions for $H^\bullet(\Mbar_{g,n})$ arising from Theorem \ref{thm:ChenevierLannes} have been verified in work of Arbarello--Cornalba \cite{arbarellocornalba-calculating}, Bergstr\"om--Faber--Payne \cite{bergstromfaberpayne}, and Canning--Larson--Payne \cite{CanningLarsonPayne11,CanningLarsonPayneMotivic}.
The following theorem is an amalgamation of these results.

\begin{thm} \label{thm:Mbar} 
For $k\leq 15$, the semisimplification of the Galois representation $H^k(\Mbar_{g,n},\Q_{\ell})$ is a direct sum of Tate twists of $1$, $S_{\ell}\langle 12\rangle$, and $S_{\ell}\langle 16\rangle$.
\end{thm}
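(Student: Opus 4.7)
The plan is to assemble three families of computations, using that $\Mbar_{g,n}$ is smooth and proper over $\Spec \Z$ by Deligne-Mumford. By Deligne's Weil conjectures, each irreducible summand of $H^k(\Mbar_{g,n},\Qell)$ is pure of weight $k$. Among the entries of Theorem~\ref{thm:ChenevierLannes}, only $1$, $\Delta_{11}$, and $\Delta_{15}$ have motivic weight at most $15$, so only their attached Galois representations---Tate twists of $1$, $S_\ell\langle 12 \rangle$, and $S_\ell\langle 16 \rangle$---are candidates for appearing. The content of the theorem is to verify this unconditionally, without appealing to the conjectural Langlands/Fontaine-Mazur correspondence between Galois and automorphic representations.

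For the even degrees $k \in \{0,2,\ldots,14\}$, the goal is to show $H^k$ is of Tate type. Stratifying $\Mbar_{g,n}$ by topological type of the stable curve gives a spectral sequence computing $H^k(\Mbar_{g,n},\Qell)$ from the cohomologies of open moduli $M_{g',n'}$ with $g' + n' \le g + n$, with each stratum a product (up to finite quotients) of smaller $M_{g',n'}$. For the lowest even degrees, Arbarello-Cornalba \cite{arbarellocornalba-calculating} show that tautological classes generate; Canning-Larson-Payne \cite{CanningLarsonPayne11, CanningLarsonPayneMotivic} then extend this through degree $14$ by combining Tate-type open cohomology of $M_{g,n}$ in this range with an induction on $g + n$ to handle boundary contributions.

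For the odd degrees, $H^k$ vanishes for $k \le 9$. The cases $k = 11, 13$ are handled by Bergstr\"om-Faber-Payne \cite{bergstromfaberpayne}: polynomial $\Fq$-point counts exhibit Frobenius traces matching $\tau(p)$, identifying the non-Tate part with Tate twists of $S_\ell\langle 12 \rangle$ by comparison with the dimensions of spaces of cusp forms for $\SL_2(\Z)$. The hardest step is $k = 15$: both $S_\ell\langle 12 \rangle(-2)$ and $S_\ell\langle 16 \rangle$ are pure of weight $15$, so separating them demands finer arithmetic input---in particular, point counts over enough primes to distinguish the Hecke eigenvalues of $\Delta_{11}$ from those of the weight-$16$ cusp form $E_4\Delta$. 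This separation, together with unconditionally excluding any further irreducible summand not on the Chenevier-Lannes list, is what is carried out in \cite{CanningLarsonPayneMotivic}.
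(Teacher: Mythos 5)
The paper offers no proof of this statement: Theorem \ref{thm:Mbar} is explicitly presented as an amalgamation of the cited results of Arbarello--Cornalba, Bergstr\"om--Faber--Payne, and Canning--Larson--Payne, so your proposal---a survey of how those papers combine, with the correct parity/weight observation that even degrees must be Tate and odd degrees $\leq 9$ must vanish---is essentially the same approach. Two small corrections, though: $H^{11}$ is due to Canning--Larson--Payne \cite{CanningLarsonPayne11} (not Bergstr\"om--Faber--Payne, who handle $H^{13}$ and the vanishing in odd degrees $\leq 9$), and the separation of $S_\ell\langle 12\rangle(-2)$ from $S_\ell\langle 16\rangle$ in $H^{15}$ in \cite{CanningLarsonPayneMotivic} is achieved by exhibiting explicit generators via boundary induction and the weight filtration on the open strata, rather than by distinguishing Hecke eigenvalues through point counts over many primes.
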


The next natural cases to study are moduli spaces of abelian varieties.  Like $M_{g,n}$, the moduli stack $A_g$ of principally polarized abelian varieties is smooth but not proper over the integers; unlike $M_{g,n}$, it does not come with a God-given and most natural compactification, but rather a whole zoo of different compactifications with various advantages and disadvantages. The cohomology of $A_g$ with twisted coefficients has been extensively studied when $g$ is small \cite{bfg11,BF}, using point counting over finite fields, and the results provide remarkable consistency checks with the predictions arising from \cref{thm:ChenevierLannes}. For further computations of the cohomology of $A_g$ and its various compactifications, see e.g.\ \cite{hainA3,HT-3,HT-4,GH,HulekTommasiTaibi,w0,w0-2,borelstablereal1,CL1,CL,GHT}.

\subsection{The Satake compactification}
One natural compactification of $A_g$ is the \emph{Satake compactification} $A_g^\Sat$. It is a proper Deligne--Mumford stack over $\Z$ containing $A_g$ as a dense open substack, but it is highly singular. It may therefore be more natural to consider its \emph{intersection cohomology} groups $\IH^\bullet(A_g^\Sat,\Q_\ell)$. Given that the raison d'\^etre of intersection cohomology is that it should treat singular spaces as if they were smooth, it is perhaps not surprising that $\IH^\bullet(A_g^\Sat,\Q_\ell)$ is similarly everywhere unramified (see \cite[Proposition 4.5.2]{taibi_ecAn} or Lemma \ref{hansen lemma} below). Abstractly, one should thus expect similarly that $\IH^k(A_g^\Sat,\Q_\ell)$, for $k \leq 22$, decomposes into a direct sum of Tate twists of the Galois representations coming from Theorem \ref{thm:ChenevierLannes}.

In fact, there is a more direct link between intersection cohomology of the Satake compactification and automorphic representations, using which much more than the above expectation can be proved. In particular, it is striking that we can go beyond weight $22$ (in principle \emph{much} further, in particular with constant coefficients), even without knowing an analogue of \cref{thm:ChenevierLannes} in weight at least $23$.
We comment more on this in \cref{rem:77} and in \cref{rem:higher_wt}. Let us for simplicity first state the special case of cohomology with constant coefficients.

\begin{thm}\label{thm:lowdegintcoh}
We can explicitly compute $\IH^k(A_g^\Sat, \Qell)$ for all $g \geq 1$ and all $k \leq 23$. All cohomology is pure of Tate type, with the sole exception of $(g,k)=(7,22)$, in which case
\[ \IH^{22}(A_7^\Sat, \Qell) \simeq \Qell(-11)^{\oplus 10} \oplus \Sym^2 S_\ell \langle 12 \rangle. \]
In particular, $\IH^k(A_g^\Sat,\Q_\ell)$ vanishes for all odd $k \leq 23$ and all $g$.
\end{thm}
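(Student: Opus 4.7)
The overall plan is to use the standard dictionary between intersection cohomology of the Satake (Baily--Borel) compactification and discrete automorphic representations, and then to apply the classification theorem of Chenevier--Lannes (Theorem \ref{thm:ChenevierLannes}) and Chenevier--Taïbi to enumerate the finitely many contributions in degrees $\leq 23$. More precisely, Zucker's conjecture (proved by Looijenga and Saper--Stern), combined with Borel--Casselman's decomposition of $L^2$-cohomology and Morel's Galois-equivariant refinement, identifies $\IH^\bullet(A_g^\Sat,\Qell)$ with a direct sum, over discrete automorphic representations $\pi$ of $\GSp_{2g}(\A_\Q)$ which are unramified at every finite place, of copies of $H^\bullet(\gfrak,K_\infty; \pi_\infty)$, with the $\ell$-adic Galois action on the $\pi$-isotypic component prescribed by Arthur's parameter of $\pi$.

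The next step is to use Arthur's multiplicity formula for $\Sp_{2g}$ and $\GSp_{2g}$ to replace the sum over $\pi$ by a sum over discrete Arthur parameters $\psi = \boxplus_i \pi_i[d_i]$ with $\sum n_i d_i = 2g+1$, where each $\pi_i$ is a self-dual cuspidal automorphic representation of $\GL_{n_i,\Q}$. The level-one hypothesis forces every $\pi_i$ to itself be level one, and so the available $\pi_i$'s of motivic weight $\leq 22$ are precisely those listed in Theorem \ref{thm:ChenevierLannes}. The constraint $k \leq 23$ on the degree, together with the known bounds on $(\gfrak,K_\infty)$-cohomology of Adams--Johnson packets, forces the archimedean parameter to contain a large multiplicity of the trivial representation; concretely, only a small combinatorial list of parameters can contribute, and for each such parameter one can compute in which bidegrees it contributes and with what Galois representation (a product of Tate twists of the $\ell$-adic representations attached to the $\pi_i$ appearing in $\psi$).

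For almost every admissible parameter, all the $\pi_i$'s are trivial, so the contribution is pure of Tate type. A parameter involving a nontrivial $\pi_i$ contributes a non-Tate piece only if the motivic weight of $\pi_i$ is small enough and the Adams--Johnson packet at infinity actually has cohomology in some degree $\leq 23$; going through the list of Theorem \ref{thm:ChenevierLannes} and the constraint $\sum n_i d_i = 2g+1$ with archimedean parameter compatible with such a low-degree contribution, the only surviving case should be $\psi = \Sym^2\Delta_{11} \boxplus \mathrm{triv}[12]$ for $\GSp_{14}$, i.e.\ $g = 7$. A direct computation of the relevant Adams--Johnson packet gives a contribution of $\Sym^2 S_\ell\langle 12 \rangle$ in degree $22$; combined with the Tate contributions from all parameters with $\pi_i = \mathrm{triv}$, one assembles the stated formula for $\IH^{22}(A_7^\Sat,\Qell)$, while all other degrees $\leq 23$ and all other $g$ produce only Tate classes. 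The vanishing in odd degrees follows because every surviving parameter is a sum of parameters with even motivic weight, and the $(\gfrak,K_\infty)$-cohomology of each Adams--Johnson packet vanishes in odd total degree.

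The main obstacle is the bookkeeping in the last step: translating each admissible Arthur parameter into its precise contribution to $\IH^k$, keeping track both of the degree in which the archimedean cohomology appears (via Vogan--Zuckerman/Adams--Johnson) and of the Galois action, and then verifying that no exotic combination of the $\Delta_w$'s and $\Delta_{w_1,w_2}$'s from Theorem \ref{thm:ChenevierLannes} produces a non-Tate summand in degree $\leq 23$ for some $g$ other than the single case $(g,k) = (7,22)$. This is essentially a finite check, but it requires a careful matching between the combinatorics of Arthur parameters for $\Sp_{2g}$ of level one and the explicit list of low-weight level-one forms.
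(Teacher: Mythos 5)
Your overall strategy (decompose $\IH^\bullet(A_g^\Sat,\Qell)$ into summands indexed by level-one Arthur--Langlands parameters $\psi=\boxplus_i\pi_i[d_i]$ with $\sum_i n_id_i=2g+1$, then enumerate using the Chenevier--Lannes classification) is the same as the paper's, but the execution contains a genuine error at the decisive step. The non-Tate class in $\IH^{22}(A_7^\Sat,\Qell)$ does \emph{not} come from $\Sym^2\Delta_{11}\boxplus[12]$: that is not even an element of $\tPsidut(\Sp_{14})$, since the factor $[12]$ contributes the half-integers $\pm\tfrac{11}{2},\dots,\pm\tfrac12$ to the infinitesimal character, which can never match the integral multiset $\{\pm 7,\dots,\pm 1,0\}$ required for $\lambda=0$. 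The correct parameter is $\Delta_{11}[4]\oplus[7]$, and the reason it produces $\Sym^2 S_\ell\langle 12\rangle$ is exactly the subtlety your recipe misses: the Galois representation attached to a summand is \emph{not} ``a product of Tate twists of the $\ell$-adic representations attached to the $\pi_i$'', but is built from lifts of those representations to $\GSpin$ groups (Kottwitz's recipe, refined in \cite[Theorem 7.1.3]{taibi_ecAn}). Here $\Sym^2 S_\ell\langle 12\rangle$ appears as a constituent of the $8$-dimensional spin representation of $\GSpin_8$ evaluated on the lift of $\Delta_{11}[4]$, not as the standard representation of a constituent $\pi_i$ of $\psi$. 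With your naive recipe you would either miss this summand or assign it to a nonexistent parameter, and the enumeration of ``exotic combinations'' you describe would not be the right finite check.

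Two further gaps. First, the finiteness of the enumeration for \emph{degree} $k\le 23$ (as opposed to motivic weight $\le 22$) requires a quantitative lower bound on the degree in which a given $\psi$ can contribute; the paper proves this by computing the minimal Frobenius weight of the spin eigenvalues of each $\cpsc(\psi_i)$ (its Lemma 2.1 and Lemma \ref{lem:k_psi_i_ineq}), whereas you only gesture at ``known bounds on $(\gfrak,K_\infty)$-cohomology''. This bound is also what allows weight-$23$ constituents such as $\Delta_{23}$ to be handled without extending Theorem \ref{thm:ChenevierLannes}. Second, the automorphic decomposition only determines $\IH^k$ up to semisimplification, while the theorem asserts an actual direct sum; the paper removes the semisimplification by noting that $\IH^k(A_g^\Sat,\Qell)$ is everywhere unramified and crystalline (so the Tate-type cases are trivial after a twist) and by invoking \cite[Theorem B (1)]{newtonthorne_adjselmer} to exclude a nontrivial conductor-one extension between $\Sym^2 S_\ell\langle 12\rangle$ and $\Qell(-11)$ in the case $(g,k)=(7,22)$. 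Your proposal is silent on this point.
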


For $\lambda = (\lambda_1 \geq \lambda_2 \geq \dots \geq \lambda_g\geq 0)$ a dominant weight of $\Sp_{2g}$, we associate a lisse $\ell$-adic sheaf $\V_\lambda$ on $A_g$ of weight $|\lambda|$, and we may consider the intersection cohomology group $\IH^k(A_g^\Sat,\V_\lambda)$, which is pure of weight $k+|\lambda|$. For $k + |\lambda| \leq 22$, we expect it to decompose into summands given by Galois representations associated to the automorphic representations in Theorem \ref{thm:ChenevierLannes}. 

Let $Y_g$ be the Lagrangian Grassmannian of $g$-dimensional subspaces of a $2g$-dimensional space. Over the complex numbers, $Y_g$ is the compact dual of $A_g$.
The cohomology ring of $Y_g$ is well-understood: $H^\bullet(Y_g)$ is pure Tate and is isomorphic to the tautological ring of any nonsingular toroidal compactification $\Abar_g$ (see Section \ref{sec:compact} for definitions), which was calculated by van der Geer \cite{vdg}. In particular, its Poincar\'e polynomial is $\sum_{i\geq 0} b_{k} t^k = \prod_{j=1}^g(1+t^{2j})$.

\begin{thm}[Theorem \ref{thm IH tables}] \label{thm IH}
 For all but finitely many pairs $(g,\lambda)$, there exists an isomorphism 
	\begin{equation}\label{isom in thm IH} \IH^k(A_g^\Sat,\V_\lambda) \cong \begin{cases}
		H^k(Y_g,\Q_\ell) & \lambda = 0 \\
		0 & \lambda \neq 0
	\end{cases}\qquad \text{for all $k+|\lambda| \leq 23$.}\end{equation}
 We tabulate all exceptional cases $(g,\lambda)$ where \eqref{isom in thm IH} fails to hold, and in these cases we compute the semisimplification of $\IH^k(A_g^\Sat,\V_\lambda)$ for  $k+|\lambda| \leq 23$.
 All summands are Tate twists of the Galois representations associated to the automorphic representations in Theorem \ref{thm:ChenevierLannes} along with $\Delta_{23}^{(i)}$ ($i=1,2$, corresponding to the two eigenforms in $S_{24}(\SL_2(\Z))$), $\Delta_{23,7}$, and $\Delta_{23,9}$ in weight 23.
\end{thm}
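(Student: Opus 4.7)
The plan is to identify $\IH^\bullet(A_g^\Sat, \V_\lambda)$ with $L^2$-cohomology and then decompose spectrally using Arthur's endoscopic classification for $\Sp_{2g}$, combined with the Chenevier--Lannes classification of level-one automorphic representations of small motivic weight (\cref{thm:ChenevierLannes}) together with its extension to motivic weight $23$. Zucker's conjecture, proved by Looijenga and Saper--Stern, yields
\[
\IH^k(A_g^\Sat, \V_\lambda) \otimes_{\Qell} \C \;\cong\; H^k_{(2)}(A_g(\C), \V_\lambda),
\]
and the standard spectral decomposition presents the latter as a direct sum indexed by discrete automorphic representations $\pi$ of $\Sp_{2g}(\A_\Q)$ with $\pi_f$ everywhere unramified and $\pi_\infty$ cohomological with respect to $V_\lambda$. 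The contribution of each such $\pi$ to degree $k$ is Arthur's discrete multiplicity $m(\pi)$ times $\dim H^k(\gfrak, \kfrak; \pi_\infty \otimes V_\lambda^\vee)$.

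The second step is to apply Arthur's endoscopic classification, which parametrizes discrete automorphic representations of $\Sp_{2g}(\A_\Q)$ by Arthur parameters $\psi = \boxplus_i \pi_i[d_i]$, where each $\pi_i$ is a self-dual cuspidal automorphic representation of some $\GL_{n_i}(\A_\Q)$ and $[d_i]$ denotes pairing with the irreducible $d_i$-dimensional representation of the Arthur $\SL_2$. The level-one condition forces each $\pi_i$ to itself be of level one, and the cohomological condition at $\infty$ bounds the motivic weight of each $\pi_i$. For $k + |\lambda| \leq 23$ this weight is at most $23$, so by \cref{thm:ChenevierLannes} and the weight-$23$ computations of Chenevier--Ta\"ibi each $\pi_i$ lies in an explicit finite list. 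Hence the set of Arthur parameters contributing to the range $k + |\lambda| \leq 23$ is finite and enumerable.

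For each such parameter $\psi$, the archimedean component $\pi_\infty$ is one of finitely many cohomological unitary representations, and its $(\gfrak, \kfrak)$-cohomology is computed by Vogan--Zuckerman via the associated $A_{\mathfrak{q}}$-modules, giving both the degrees in which cohomology is nonzero and the multiplicities. The trivial parameter (with $\lambda = 0$) contributes the main term $H^\bullet(Y_g, \Qell)$, because the associated unitary representation at infinity is trivial and its relative Lie algebra cohomology agrees with the rational cohomology of the compact dual $Y_g$. The other parameters yield the exceptional contributions. Transported to the Galois side by the known results for Siegel-type Shimura varieties, the $\ell$-adic Galois representation attached to each $\pi_i$ is the two-dimensional $S_\ell\langle w+1\rangle$ of Eichler--Shimura--Deligne if $\pi_i = \Delta_w$, or the four-dimensional $\GSp_4$-type representation attached to a vector-valued Siegel cusp form of stable type if $\pi_i = \Delta_{w_1,w_2}$; the contribution of $\psi$ to intersection cohomology is an appropriate direct sum of Tate twists of these.

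The main obstacle is the combinatorial enumeration and the bookkeeping of multiplicities. For each pair $(g, \lambda)$ with $k + |\lambda| \leq 23$ one must list all parameters $\psi$ for $\Sp_{2g}$ whose cuspidal constituents are drawn from the Chenevier--Lannes--Ta\"ibi list, check the infinitesimal-character and Arthur $\SL_2$-parity compatibilities imposed by $\lambda$, and apply Arthur's multiplicity formula to determine which parameters actually have $m(\pi) > 0$ (a number of potential contributions being killed by Arthur signs). The exceptional table is then assembled by summing contributions. The task is finite because the set of cuspidal ingredients is finite and the regularity of the infinitesimal character together with the motivic-weight bound constrains $g$ in each case; the isolated anomaly at $(g,k) = (7,22)$ should emerge from the unique discrete parameter of $\Sp_{14}$ whose cuspidal part involves $\Sym^2 \Delta_{11}$ with the remaining slots filled by trivial Arthur--$\SL_2$ pieces.
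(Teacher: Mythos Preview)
Your outline follows a recognizable strategy, but there are two genuine gaps, and the paper's approach differs in an important way that you should understand.

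\textbf{Galois representations versus Hodge structures.} The Zucker-conjecture route you propose computes $\IH^\bullet(A_g^\Sat,\V_\lambda)$ only as a real Hodge structure, not as a Galois representation. The paper explicitly flags this in \cref{sec:L2}: the $L^2$/$(\gfrak,K)$-cohomology method ``would only see the `shadow' of our results involving Hodge numbers and real Hodge structures.'' To get the semisimplified Galois representation claimed in the theorem, the paper instead invokes \cite[Theorem 4.7.2]{taibi_ecAn}, which is a Kottwitz-conjecture-type description of the $\ell$-adic \'etale intersection cohomology, and there is no known Hodge--Tate or de Rham comparison bridging the two in this non-compact setting (see \cref{rem:comp_Arthur_Kottwitz}).

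\textbf{The weight bound is not immediate.} Your assertion ``the cohomological condition at $\infty$ bounds the motivic weight of each $\pi_i$; for $k+|\lambda|\le 23$ this weight is at most $23$'' hides the real work. The infinitesimal-character condition only gives $w_1(\pi_i)\le \lambda_1+g$, which is not bounded by $23$. The actual mechanism is that the contribution of $\psi=\oplus_i\pi_i[d_i]$ to $\IH^k$ is computed via \emph{spin} representations of $\Spin_{n_id_i}$, and one must track the Frobenius weights there. The paper introduces the quantity $k(\psi_i)=|\tau^{(i)}|-\tfrac{n_i\lfloor d_i^2/2\rfloor}{4}$ and proves the nontrivial lower bound $k(\psi_i)\ge r_i(r_i+1)/2$ (\cref{lem:k_psi_i_ineq}); only then does the enumeration become finite. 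Without this, you have not shown the list of contributing $\pi_i$ is finite.

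\textbf{Spin, not standard.} Relatedly, the Galois representation contributed by $\psi$ is built from \emph{spin} lifts $\sigma_{\psi_i,\iota}^{\spin}$, not directly from the standard Galois representations attached to the $\pi_i$. The theorem can nonetheless be phrased in terms of $S_\ell\langle w+1\rangle$ etc.\ only because in this range the relevant $n_id_i$ are small enough for the exceptional isomorphisms $\Spin_3\simeq\SL_2$, $\Spin_4\simeq\SL_2^2$, $\Spin_5\simeq\Sp_4$ to apply (see \cref{rem:77}). In particular, the anomaly at $(g,k)=(7,22)$ comes from the parameter $\Delta_{11}[4]\oplus[7]$, and the $\Sym^2 S_\ell\langle 12\rangle$ arises as (part of) the spin Galois representation attached to $\Delta_{11}[4]$; the cuspidal ingredient is $\Delta_{11}$, not $\Sym^2\Delta_{11}$.
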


\begin{rem}The first sentence of \cref{thm IH} is not new: since \eqref{isom in thm IH} is vacuous for $|\lambda| > 24$, it suffices to prove that \eqref{isom in thm IH} holds for all sufficiently large $g$ to see that there are only finitely many exceptions. Now by the  Zucker conjecture (theorem of \cite{looijengazucker,saperstern}), $\IH^\bullet(A_g^\Sat,\R)$ is isomorphic to the transcendentally defined $L^2$-cohomology of $A_g$, and the $L^2$-cohomology of $A_g$ was studied in detail in Borel's work on stable real cohomology of arithmetic groups \cite{borelstablereal1}. Indeed, Borel's results (specialized to symplectic groups) prove homological stability for $H^\bullet(A_g,\R)$ by first proving homological stability for $L^2$-cohomology, and then showing that $L^2$-cohomology and ordinary cohomology agree in a range; from this one may deduce \eqref{isom in thm IH} for sufficiently large $g$. We remark also that Borel's results are valid more generally for any congruence subgroup of $\Sp_{2g}(\Z)$.
\end{rem}

\begin{rem}\label{rem:77}
Let us explain why the methods used in this paper allow one to go beyond motivic weight 22. 
The point is that the intersection cohomology of the Satake compactification can be decomposed into summands indexed by \emph{Arthur--Langlands parameters}, each of which is described in terms of level 1 self-dual cuspidal automorphic representations of some $\PGL_{n,\Q}$.
But if an Arthur--Langlands parameter $\psi$ is built out of level $1$ cuspidal automorphic representations $\pi_0,\dots,\pi_r$, then in general the corresponding summand of intersection cohomology is \emph{not} built directly from the Galois representations attached to $\pi_0,\dots,\pi_r$, but rather from lifts to $\GSpin$ groups of these standard Galois representations (see \S 1.3.4--6 of \cite{taibi_ecAn}).
Hence the recipe for passing from an Arthur--Langlands parameter to a corresponding Galois representation appearing in the intersection cohomology is more subtle; in particular, the motivic weights of the constituents do not directly determine the weight of the summand of cohomology.  

The nontrivial constituents of an Arthur--Langlands parameter are always \emph{regular} level 1 cuspidal automorphic representations.
By contrast, Galois representations appearing in $\IH^\bullet(A_g^\Sat,\V_\lambda)$ are typically not associated to regular automorphic representations.
Moreover, one can bound from above the dimensions appearing in any fixed weight.
For example, it follows from \cref{lem:k_psi_i_ineq} that if $k+|\lambda| \leq 77$, then the Arthur--Langlands parameters contributing nontrivially to $\IH^k(A_g^\Sat,\V_\lambda)$ will only involve regular level 1 cuspidal automorphic representations for $\PGL_{n,\Q}$ for $n \leq 23$.
Enumerating regular level 1 cuspidal automorphic representations is a significantly easier task than enumerating \emph{all} of them, and can be done by computer up to rather high weights.  

The reason \cref{thm IH} can be formulated in terms of (standard) Galois representations associated to level $1$ cuspidal automorphic representations of $\PGL_n$ is that only $\pi_i$'s of small dimension occur, we have exceptional isomorphisms $\Spin_3 \simeq \SL_2$, $\Spin_4 \simeq \SL_2 \times \SL_2$ and $\Spin_5 \simeq \Sp_4$, \emph{and} we know the corresponding lifts on the automorphic side.
This breaks down in higher dimensions (but see Remark \ref{rem:higher_wt} for an extra case), and thus we cannot formulate the analogue of \cref{thm IH} in the same way in higher degree, even though we can write a precise list of (half-)spin Galois representations which can occur (the representations $\sigma_{\psi_0,\iota}^{\spin}$ and $\sigma_{\psi_i,\iota}^{\spin,\epsilon}$ of \cite[\S 1.3.6]{taibi_ecAn}).

Also note that not all known level one cuspidal automorphic representations for $\GL_{n,\Q}$ of motivic weight $23$ (see \cite[Theorem 3 and Theorem 4]{cheneviertaibi}) contribute to some $\IH^k(A_g^\Sat, \V_\lambda)$ for $k + |\lambda| \leq 23$.
A similar phenomenon occurs in degree $24$ as well: compare \cref{rem:higher_wt} and \cite[Theorem 5]{cheneviertaibi}.
\end{rem}

The bulk of the work in proving \cref{thm IH} is in prior work of the third author \cite{taibi_ecAn}. 

\subsection{Toroidal compactifications}
Beyond the Satake compactification, one may consider \emph{toroidal compactifications} of $A_g$, as constructed by Ash, Mumford, Rapoport, and Tai \cite{AMRT} over $\C$. The toroidal compactifications are normal crossings compactifications, and any toroidal compactification admits a resolution of singularities that is also a toroidal compactification.

Faltings and Chai extended the work in \cite{AMRT}, giving smooth and proper compactifications over $\Z$ \cite{faltingschai}. Moreover, let $X_g\rightarrow A_g$ be the universal principally polarized abelian $g$-fold, and let $X_{g,s}\rightarrow A_g$ be its $s$-fold fiber product. For all $g$ and $s$, Faltings and Chai constructed toroidal compactifications of $X_{g,s}$ that are smooth and proper over the integers. The case $s=0$ recovers the toroidal compactifications of $A_g$.

A toroidal compactification $\Abar^\Sigma_{g}$ depends on the choice of an admissible decomposition $\Sigma$ of the rational closure of the cone of positive definite quadratic forms on $\R^g$. Toroidal compactifications of $X_{g,s}$ also depend on admissible cone decompositions of a related cone (see Section \ref{sec:compact} for definitions). Certain aspects of the geometry are reflected in the combinatorics of the cone decomposition. In particular, the compactification is smooth as a stack over $\mathrm{Spec}(\Z)$ if the cone decomposition is simplicial.

\begin{thm}\label{thm:main}
    For $k\leq 23$ and for any nonsingular toroidal compactification $\Xbar_{g,s}$ of $X_{g,s}$, the semi-simplification of the Galois representation $H^k(\Xbar_{g,s},\Qellbar)$ is isomorphic to a direct sum of Tate twists of irreducible constituents of the Galois representations associated to the automorphic representations in Theorem \ref{thm IH}.
\end{thm}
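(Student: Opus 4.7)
The plan is to apply the Beilinson--Bernstein--Deligne--Gabber decomposition theorem to the proper morphism $p \colon \Xbar_{g,s} \to A_g^{\Sat}$, obtained by composing the natural maps $\Xbar_{g,s} \to \Abar_g^\Sigma \to A_g^{\Sat}$, and then reduce the claim to Theorem~\ref{thm IH}. Since $\Xbar_{g,s}$ is smooth and proper, $Rp_*\Qellbar$ is pure and decomposes as a direct sum of shifted, Tate-twisted simple perverse sheaves on $A_g^{\Sat}$. The strata of $A_g^{\Sat}$ are $A_{g'}$ for $0 \leq g' \leq g$, and the simple local systems of geometric origin on each $A_{g'}$ semisimplify to direct sums of the symplectic local systems $\V_\lambda$. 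Passing to hypercohomology yields
\[
H^k(\Xbar_{g,s}, \Qellbar)^{\mathrm{ss}} \cong \bigoplus_{g',\,\lambda,\,k',\,n} \IH^{k'}(A_{g'}^{\Sat}, \V_\lambda)(n)^{\oplus m_{g', \lambda, k', n}}
\]
for some nonnegative integer multiplicities, and purity of $H^k(\Xbar_{g,s}, \Qellbar)$ (which holds because $\Xbar_{g,s}$ is smooth and proper) forces the weight relation $k' + |\lambda| - 2n = k$ on every nonzero summand.

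The key technical step is to show that only summands with $k' + |\lambda| \le 23$ occur when $k \le 23$, so that Theorem~\ref{thm IH} applies termwise. This rests on Faltings--Chai's explicit description of the toroidal compactifications: the preimage of each stratum $A_{g'}$ in $\Xbar_{g,s}$ is stratified by torus bundles over products of smaller universal abelian varieties, and the weights of the $\V_\lambda$ that can appear, together with the perverse shifts in the decomposition theorem, are controlled by the cohomology of these fibers. Combining this control with the purity relation pins down the range of the Tate twists $n$ and forces the desired inequality.

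With the bound in place, Theorem~\ref{thm IH} identifies each $\IH^{k'}(A_{g'}^{\Sat}, \V_\lambda)$ with $k' + |\lambda| \le 23$ as a direct sum of Tate twists of irreducible constituents of the Galois representations associated to the automorphic representations listed there, and the further Tate twist $(n)$ preserves this property, giving the asserted description of $H^k(\Xbar_{g,s}, \Qellbar)^{\mathrm{ss}}$. The main obstacle is the bound $k' + |\lambda| \le 23$: it requires tracking the combined contribution of perverse shifts, local-system weights, and Tate twists in the decomposition theorem via the boundary geometry of $\Xbar_{g,s} \to A_g^{\Sat}$, so as to rule out ``deep'' summands with $k' + |\lambda| > 23$ from contributing to low-degree cohomology.
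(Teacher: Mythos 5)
Your overall strategy---reduce $H^k(\Xbar_{g,s},\Qellbar)$ to intersection cohomology groups $\IH^{k'}(A_{g'}^{\Sat},\V_\lambda)$ with $k'+|\lambda|\leq 23$ and then quote Theorem \ref{thm IH}---is the right reduction, but the route you choose (the full decomposition theorem for $\Xbar_{g,s}\to A_g^{\Sat}$) leaves the decisive step unproven, and you say so yourself: the ``key technical step'' and ``main obstacle'' is exactly the assertion that only summands with $k'+|\lambda|\leq 23$ occur, and your proposal offers no argument for it beyond an appeal to ``tracking perverse shifts, local-system weights, and Tate twists.'' This does not follow from general weight theory. Purity of $H^k(\Xbar_{g,s})$ only gives the relation $k'+|\lambda|+2m=k$ for a summand $\IH^{k'}(A_{g'}^{\Sat},\V_\lambda)(-m)$; to conclude $k'+|\lambda|\leq k$ you need \emph{effectivity} of the twist, $m\geq 0$. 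Over the deeper strata $A_{g'}$ the fibers of $p$ are singular projective varieties, for which one only knows $H^j$ has weights in $[0,j]$; this is compatible with a constituent $\V_\lambda(-m)$ having $m<0$ as long as $|\lambda|+2m\geq 0$, so effectivity must be extracted from the actual geometry. Likewise, your claim that every simple local system of geometric origin on $A_{g'}$ semisimplifies into $\V_\lambda$'s (up to Tate twist) is not automatic --- the arithmetic fundamental group of $A_{g'}$ has many non-algebraic representations --- and needs to be verified for the specific local systems produced by this morphism.

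The paper sidesteps the decomposition theorem entirely for this upper bound. It runs the spectral sequence of the stratification of $\Xbar_{g,s}$ by the strata $\gamma(\tau)$ in \emph{compactly supported} cohomology; purity of the smooth proper target shows only $\gr^W_i H^i_c(\gamma(\tau))$ can survive. Each stratum is explicitly a finite quotient of a torus bundle of rank $k$ over $X_{g-r,s+r}$, which embeds its top weight cohomology into that of a rank-$k$ vector bundle, producing a Tate twist by $-k$ with $k\geq 0$ \emph{by construction}; the Leray spectral sequence for $X_{g-r,s+r}\to A_{g-r}$ together with the K\"unneth decomposition of $R^q\pi^{s+r}_*\Qell$ (Lemma \ref{lem:decomposition}) then yields exactly the constraints $m\geq\binom{r+1}{2}+rs\geq 0$ and $\lambda_1\leq s+r$. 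Finally, a Durfee-type argument shows the pure quotient $\gr^W_p H^p_c(A_{g-r},\V_\lambda(-m))$ injects into $\IH^p(A_{g-r}^{\Sat},\V_\lambda(-m))$, which is where intersection cohomology enters --- as a receptacle for subquotients rather than as the output of a perverse decomposition. If you want to salvage your approach, you would in effect have to reprove this stratum-by-stratum analysis in order to control the constituents of $Rp_*\Qellbar$ supported on $\overline{A_{g'}}$, at which point the decomposition theorem is doing no work; the paper reserves that tool (Lemma \ref{hansen lemma}) for the complementary task of showing that certain representations genuinely \emph{do} occur.
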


Some of the most commonly studied toroidal compactifications of $A_g$ correspond to the perfect cone, second Voronoi, and central cone decompositions. None of these decompositions is simplicial for $g\geq 5$, and thus the corresponding compactifications are singular. Nevertheless, any admissible cone decomposition can be refined to a simplicial one, and the analogous result holds for cone decompositions giving rise to compactifications of $X_{g,s}$. The refinement induces a resolution of singularities on the level of the moduli stacks. Considering weights, we obtain the following corollary of Theorem \ref{thm:main}.
\begin{cor}\label{cor:purepart}
        For $k\leq 23$ and for any toroidal compactification $\Xbar_{g,s}$ of $X_{g,s}$, the Galois representation $\gr^W_k H^k(\Xbar_{g,s},\Q_{\ell})$ is a direct sum of Tate twists of the Galois representations associated to the automorphic representations in Theorem \ref{thm IH}.
\end{cor}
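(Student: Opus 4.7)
The plan is to reduce \cref{cor:purepart} to \cref{thm:main} via the refinement of cone decompositions noted in the paragraph preceding the corollary. Given an arbitrary toroidal compactification $\Xbar_{g,s}^\Sigma$, the first step is to choose a simplicial refinement $\Sigma'$ of $\Sigma$ and to consider the induced proper birational morphism $\pi\colon \Xbar_{g,s}^{\Sigma'}\to \Xbar_{g,s}^\Sigma$, whose source is a smooth and proper Deligne--Mumford stack over $\Spec(\Z)$; \cref{thm:main} therefore applies directly to $H^k(\Xbar_{g,s}^{\Sigma'},\Qell)$.

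The central step is the following standard weight-theoretic fact: for a proper surjection $f\colon \tilde X\to X$ of proper Deligne--Mumford stacks over $\Q$ with $\tilde X$ smooth, the Galois-equivariant pullback $f^*\colon H^k(X,\Qell)\to H^k(\tilde X,\Qell)$ has kernel exactly $W_{k-1}H^k(X,\Qell)$. The inclusion $W_{k-1}\subseteq \ker f^*$ follows from $H^k(\tilde X,\Qell)$ being pure of weight $k$ combined with strict compatibility of $f^*$ with the Frobenius weight filtration; the reverse inclusion is the standard $\ell$-adic analogue of Deligne's theorem on proper surjections in mixed Hodge theory. Combined with the weight bound $H^k(X,\Qell)\subseteq W_k$ for proper $X$, this exhibits $\gr^W_k H^k(X,\Qell)$ as a Galois-stable subrepresentation of the pure-weight-$k$ representation $H^k(\tilde X,\Qell)$.

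Applying the fact to $\pi$ and passing to semisimplifications, \cref{thm:main} tells us that the semisimplification of $H^k(\Xbar_{g,s}^{\Sigma'},\Qell)$ is a direct sum of Tate twists of irreducible constituents of the Galois representations from \cref{thm IH}; the subrepresentation $\gr^W_k H^k(\Xbar_{g,s}^\Sigma,\Qell)$ is thus built out of the same constituents, yielding \cref{cor:purepart}. The only real (and minor) obstacle is carrying out the weight argument for proper Deligne--Mumford stacks rather than for schemes; this is routine, either by passing to coarse moduli spaces (which have the same $\Qell$-cohomology) or by directly invoking the formalism of mixed $\ell$-adic sheaves on DM stacks.
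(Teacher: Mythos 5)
Your proposal is correct and follows exactly the paper's (very brief) argument: refine $\Sigma$ to a simplicial decomposition, observe that the induced map is a resolution of singularities, and use the standard weight argument to embed $\gr^W_k H^k(\Xbar_{g,s}^\Sigma,\Qell)$ into the pure cohomology of the smooth model, to which \cref{thm:main} applies. The paper compresses all of this into the sentence ``Considering weights, we obtain the following corollary of Theorem \ref{thm:main}''; your write-up simply supplies the details.
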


\begin{table}[h!]
\centering
\begin{tabular}{|c|c|c|c|c|c|c|c|c|c|} 
 \hline
 $g$ & $1$ & $2$ & $3$ & $4$ & $5$ & $6$ & $\geq 7$ \\
 \hline
 $c(g)$ & $10$ & $7$ & $6$ & $3$ & $2$ & $1$ & $0$  \\ 
\hline
\end{tabular}
\vspace{.1in}
\caption{}
\label{nontatetable}
\end{table}

It is natural to ask if any of the Galois representations associated to the non-trivial automorphic representations in Theorem \ref{thm:ChenevierLannes} actually appear in the cohomology of $\Xbar_{g,s}$. Let $c(g)$ be the function defined in Table \ref{nontatetable}.
\begin{thm}\label{thm:tateornot}
    Let $g\geq 1$ and $\Xbar_{g,s}$ be a nonsingular toroidal compactification of $X_{g,s}$. Then $H^{\bullet}(\Xbar_{g,s},\Q_{\ell})$ is not Tate if and only if $s\geq c(g)$. 
\end{thm}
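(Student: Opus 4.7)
The plan is to combine the low-degree information of \cref{thm:main} with Poincaré duality on one side, and explicit constructions of non-Tate classes on the other. The crucial preliminary remark is dimensional: the pairs $(g,s)$ for which the implication ``$s<c(g)\Rightarrow H^\bullet$ is Tate'' has content are those with $g\le 6$ and $s<c(g)$, and a direct check gives $\dim \Xbar_{g,s}=\binom{g+1}{2}+gs\le 21$ in every such case. Since $\Xbar_{g,s}$ is smooth and proper, $H^k(\Xbar_{g,s},\Q_\ell)$ is pure of weight $k$, and Poincaré duality identifies it with a Tate twist of $H^{2\dim-k}$; so the range $k\le 23$ of \cref{thm:main} suffices to describe all of $H^\bullet$ in these cases.

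For the backward direction I would combine this dimensional remark with the refinement of \cref{thm:main} implicit in its proof: via the Leray spectral sequence for $\pi\colon X_{g,s}\to A_g$ and the decomposition theorem for a morphism from $\Xbar_{g,s}$ towards $A_g^\Sat$, the non-Tate constituents of $H^k(\Xbar_{g,s},\Q_\ell)$ for $k\le 23$ arise as non-Tate constituents of $\IH^{k-|\lambda|}(A_g^\Sat,\V_\lambda)$ for symplectic weights $\lambda$ such that $\V_\lambda$ occurs in the $\Sp_{2g}$-representation $H^\bullet(A^s)=(\Lambda^\bullet V)^{\otimes s}$; in particular $|\lambda|\le s$. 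When $s<c(g)$, inspection of the exceptional cases recorded in \cref{thm IH} shows that no such $\V_\lambda$ admits a non-Tate summand, proving Tateness in degrees $\le 23$ and thus in all degrees.

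For the forward direction I would exhibit an explicit non-Tate class for each $g$. For $1\le g\le 6$, I would extract from the tables of \cref{thm IH} a dominant weight $\lambda_g$ with $|\lambda_g|=c(g)$ and a non-Tate summand $\rho_g\subset \IH^{k_g}(A_g^\Sat,\V_{\lambda_g})$ (for instance $\lambda_1=(10)$ with $\rho_1=S_\ell\langle 12\rangle$ from $\Delta_{11}$). For $g\ge 7$ take $\lambda_g=0$ and $\rho_g=\Sym^2 S_\ell\langle 12\rangle\subset \IH^{22}(A_g^\Sat,\Q_\ell)$; its existence for every $g\ge 7$ comes from the Arthur parameter built from $\Sym^2\Delta_{11}$, as in the proof of \cref{thm:lowdegintcoh}. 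For any $s\ge c(g)$, $\V_{\lambda_g}$ occurs in $(\Lambda^\bullet V)^{\otimes s}$, so the cuspidal class $\rho_g$ is pure of weight $k_g+|\lambda_g|$ and survives in $H^{k_g+|\lambda_g|}(\Xbar_{g,s},\Q_\ell)$ via the Leray/decomposition-theorem argument above, ruling out Tateness.

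The main obstacle is the refinement of \cref{thm:main} needed in the backward direction: one must track not merely which Galois representations are allowed to appear in the semisimplification, but which ones \emph{actually} do for each $(g,s)$. Concretely this becomes a bookkeeping exercise guided by the branching principle ``$\V_\lambda$ appears in $(\Lambda^\bullet V)^{\otimes s}$ only if $|\lambda|\le s$'' and by the precise non-Tate contributions listed in the tables of \cref{thm IH}. A secondary issue is verifying that $\Sym^2 S_\ell\langle 12\rangle$ really does persist in $\IH^{22}(A_g^\Sat,\Q_\ell)$ for every $g\ge 7$, which should follow from the Arthur-parameter analysis of \cite{taibi_ecAn}.
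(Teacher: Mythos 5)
Your overall architecture (reduce to $\IH^\bullet(A_{g'}^\Sat,\V_\lambda)$ via Leray plus the decomposition theorem, then read off Tateness from the tables) matches the paper's for most $(g,s)$, and the Poincar\'e-duality/dimension-count observation in the backward direction is a reasonable substitute for the paper's argument. But there are genuine gaps, two of them fatal as written.

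First, the forward direction fails for $(g,s)=(3,6)$ and $(3,7)$, and these are exactly the cases the paper must treat by a completely different method. Your plan is to find a dominant weight $\lambda$ occurring in $R^\bullet\pi^s_*\Qell$ (the correct branching condition from Lemma \ref{lem:decomposition} is $\lambda_1\le s$, not $|\lambda|\le s$ --- note $(7,7)$ occurs for $s\ge 7$, not $s\ge 14$) such that $\IH^\bullet(A_3^\Sat,\V_\lambda)$ is non-Tate. No such $\lambda$ exists with $\lambda_1\le 7$: the only non-Tate entries for $g=3$ in Tables \ref{tab:DEF} have $\lambda_1\ge 8$ (e.g.\ $(8,4,4)$), which is why the direct argument only covers $s\ge 8$. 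For $s\in\{6,7\}$ the non-Tate classes are not visible in any intersection cohomology group over the interior stratum; they arise from the non-pure (Eisenstein/boundary) part of $H^\bullet_c$ of the strata, e.g.\ from $\V_{7,7}$ on $A_2$ appearing in torus-rank-one boundary strata fibering over $X_{2,s+1}$. Since these sit in non-pure pieces of the spectral sequence of the stratification, one must rule out cancellation; the paper does this by computing compactly supported Euler characteristics of all strata (Lemmas \ref{lem:X36X37} and \ref{lem: ec of torus rank one strata}), counting the $\Z/2$-stabilized rank-one cones (Proposition \ref{prop:number of torus rank one strata}), and checking that the $S_\ell\langle 18\rangle$ coefficients cannot sum to zero. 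None of this is recoverable from your outline.

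Second, for $g\ge 8$ your proposed witness $\Sym^2 S_\ell\langle 12\rangle\subset \IH^{22}(A_g^\Sat,\Qell)$ does not exist: Theorem \ref{thm:lowdegintcoh} says $\IH^{22}(A_g^\Sat,\Qell)$ is Tate for every $g\ne 7$. The parameter $\Delta_{11}[4]\oplus[2g-7]$ requires the dominant weight $(7-g,\dots)$, hence $g\le 7$. The paper instead uses $\Delta_{11}[6]\oplus[5]$ contributing $(\Sym^2 S_\ell\langle 12\rangle)(-3)$ to $\IH^{28}(A_8^\Sat,\Qell)$, and $\Delta_{2g-3}^{(j)}[4]\oplus[2g-7]$ contributing to $\IH^{4g-6}(A_g^\Sat,\Qell)$ for $g\ge 9$ --- degrees beyond the tabulated range, requiring a separate computation of $u_1(\psi)$ and the spin representations. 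Finally, in the backward direction your reduction only accounts for local systems pulled back over the open stratum $A_g$; you also need the boundary contributions $\IH^p(A_{g-r}^\Sat,\V_\lambda(-m))$ with $\lambda_1\le s+r$ from Proposition \ref{prop:allsubquotients} (the bound $\lambda_1+(g-r)\le g+s\le 6$ still forces all contributing parameters to be $[2d+1]$ or $\Delta_{11}[2]\oplus[2d+1]$, so the conclusion survives, but the step must be included).
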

The point count $\#\Xbar_{g,s}(\mathbb{F}_q)$ for any nonsingular toroidal compactification is a polynomial in $q$ if and only if the cohomology is Tate, so Theorem \ref{thm:tateornot} determines when the point counting function is a polynomial.
Note that the third author has proven that $\#A_g(\mathbb{F}_q)$ is a polynomial in $q$ when $g\leq 6$, computed the polynomials, and shown that $\#A_7(\mathbb{F}_q)$ is not polynomial in $q$ \cite[Theorem 1]{taibi_ecAn}.
This implies that $H^\bullet(A_7,\Q_{\ell})$ is not Tate.
We extend this latter result to higher genus and to fiber products of the universal abelian variety, and we prove a converse statement.
\begin{thm}\label{thm:interiornontate}
    Let $g\geq 1$. The Galois representation $H^\bullet(X_{g,s},\Q_{\ell})$ is not Tate if and only if $s\geq c(g)$.
\end{thm}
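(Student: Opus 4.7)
The plan is to compare $H^\bullet(X_{g,s},\Q_\ell)$ with the cohomology of a nonsingular toroidal compactification $\Xbar_{g,s}$ via the Leray spectral sequence for the smooth proper morphism $\pi \colon X_{g,s} \to A_g$. Since the fibers of $\pi$ are abelian varieties, this spectral sequence degenerates and produces an $\Sp_{2g}$-equivariant decomposition
\[
H^\bullet(X_{g,s},\Q_\ell) \;\simeq\; \bigoplus_\lambda H^\bullet(A_g,\V_\lambda)^{\oplus M_\lambda(g,s)},
\]
where $\lambda$ runs over dominant weights of $\Sp_{2g}$ and $M_\lambda(g,s)$ is the multiplicity of $\V_\lambda$ in $\bigwedge^\bullet\!\bigl(\bigoplus_{i=1}^s \Std\bigr)$. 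In particular $M_\lambda(g,s) > 0$ forces $|\lambda| \leq s$, so the problem reduces to deciding Tate-ness of $H^\bullet(A_g,\V_\lambda)$ for the finitely many such $\lambda$'s.

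For the implication $s \geq c(g) \Rightarrow H^\bullet(X_{g,s},\Q_\ell)$ not Tate, I would start from the non-Tate class in $H^\bullet(\Xbar_{g,s},\Q_\ell)$ provided by Theorem~\ref{thm:tateornot}. By Theorem~\ref{thm:main}, every non-Tate irreducible constituent of $H^\bullet(\Xbar_{g,s},\Qellbar)$ is a Tate twist of a Galois representation attached to one of the cuspidal automorphic representations in Theorem~\ref{thm IH}. Since their Arthur parameters are cuspidal, these classes already lie in the cuspidal summand $H^\bullet_{\mathrm{cusp}}(A_g,\V_\lambda)$, which is a direct summand of both $\IH^\bullet(A_g^\Sat,\V_\lambda)$ and $H^\bullet(A_g,\V_\lambda)$. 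Verifying that $M_\lambda(g,s) > 0$ precisely when $s \geq c(g)$ for the $\lambda$ supporting the first non-Tate cusp form in genus $g$ (for example $\lambda = 0$ when $g \geq 7$, corresponding to $\Sym^2 \Delta_{11}$) then produces a non-Tate class in $H^\bullet(X_{g,s},\Q_\ell)$.

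For the converse $s < c(g) \Rightarrow H^\bullet(X_{g,s},\Q_\ell)$ Tate, I would separate $H^\bullet(A_g,\V_\lambda)$ into cuspidal and Eisenstein parts via Franke's decomposition. The cuspidal part embeds as a summand of $\IH^\bullet(A_g^\Sat,\V_\lambda)$, and the Arthur--Langlands machinery underlying Theorem~\ref{thm IH} (cf.\ Remark~\ref{rem:77}) yields a finite enumeration of the cuspidal automorphic representations that can contribute, independently of cohomological degree; this enumeration excludes non-Tate constituents whenever $|\lambda| \leq s < c(g)$. The Eisenstein part is built by parabolic induction from cuspidal cohomology of Levi subgroups containing $\Sp_{2g'}$ for $g' < g$, which is Tate by induction on $g$ combined with the monotonicity of $c$ apparent in Table~\ref{nontatetable}.

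The principal obstacle is the unbounded cohomological degree: Theorem~\ref{thm IH} gives explicit control only in the range $k + |\lambda| \leq 23$, whereas $H^k(A_g,\V_\lambda)$ can be nontrivial up to $k = g(g+1)$. This is precisely the issue addressed by the Arthur-parameter counting of Remark~\ref{rem:77}, which limits the $\PGL_{n,\Q}$'s contributing to cuspidal classes of any bounded infinitesimal character and thereby keeps the enumeration of potential non-Tate contributions finite. The remaining delicate work is making the correspondence between Arthur parameters and Galois representations sufficiently explicit to pinpoint $s = c(g)$ as the critical threshold, and carrying through the inductive bookkeeping for the Eisenstein terms so that the parameters of the Levi-cuspidal pieces fall under the inductive hypothesis.
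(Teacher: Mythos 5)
Your proposal diverges substantially from the paper's proof, and both directions have genuine gaps.

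\emph{Forward direction.} The transfer from a non-Tate class in $H^\bullet(\Xbar_{g,s})$ to one in $H^\bullet(X_{g,s})$ via cuspidal cohomology does not work. The relevant parameters (e.g.\ $\Delta_{11}[4]\oplus[2g-7]$, whose contribution contains $\Sym^2 S_\ell\langle 12\rangle$) are \emph{non-tempered} Arthur--Langlands parameters, not cuspidal ones, so your assertion that the corresponding classes ``already lie in the cuspidal summand'' is unjustified; passing from $\IH^\bullet(A_g^\Sat,\V_\lambda)$ to the cohomology of the open stack is precisely the nontrivial step. Worse, for $(g,s)=(3,6),(3,7)$ the strategy cannot succeed in principle: all the groups $\IH^\bullet(A_3^\Sat,\V_\lambda)$ with $\lambda_1\leq s$ are Tate, and the non-Tate class $S_\ell\langle 18\rangle$ in $H^\bullet(X_{3,s})$ is an Eisenstein class, visible only through the boundary term $e_{\IH}(A_2^\Sat,\V_{7,7})$ in the Euler-characteristic formula of \cref{lem:X36X37}; a similar failure occurs at $g=8$, where the unique non-Tate parameter $\Delta_{11}[6]\oplus[5]$ contributes only in weight $28$ with positive Hodge--Tate weights. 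The paper instead argues Hodge-theoretically for $4\leq g\leq 7$ and $g\geq 9$: the exact sequence $H^{k-2}(\tilde{\partial X}_{g,s},\C)\to H^k(\Xbar_{g,s},\C)\to W_kH^k(X_{g,s},\C)\to 0$, whose first map has type $(1,1)$, identifies $H^{k,0}(\Xbar_{g,s})$ with $F^kW_kH^k(X_{g,s},\C)$, so the holomorphic forms of \cref{thm:holomorphic} force non-Tateness of the interior; the cases $g=3$ and $g=8$ require separate Euler-characteristic computations.

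\emph{Converse direction.} Your inductive treatment of the Eisenstein part is flawed: the cuspidal data on a Levi $\GL_{a_1}\times\cdots\times\Sp_{2g'}$ contributing to Eisenstein cohomology of $\V_\lambda$ carries infinitesimal character shifted by $\rho$, so the local systems arising on $A_{g'}$ are \emph{not} of the form controlled by the inductive hypothesis $s<c(g')$ --- this is exactly why $\V_{7,7}$ on $A_2$, which supports $S_\ell\langle 18\rangle$, intervenes for $g=3$ even though $\lambda_1\leq 7$. Monotonicity of $c$ does not repair this. (Note also that \cref{lem:decomposition} gives the constraint $\lambda_1\leq s$, not $|\lambda|\leq s$.) The paper's converse avoids automorphic decompositions of $H^\bullet(A_g,\V_\lambda)$ entirely: it runs the weight spectral sequence of the normal crossings compactification, reduces each graded piece via \cref{stratification lemma 2} to groups $\gr^W_iH^p_c(A_{g-r},\V_\lambda(-m))$ embedding into $\IH^p(A_{g-r}^\Sat,\V_\lambda(-m))$, handles weights $i\leq 21$ by the tables, and disposes of the unbounded-degree problem you correctly identify by applying Poincar\'e duality to intersection cohomology. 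Some such device is indispensable; your appeal to \cref{rem:77} bounds parameters by motivic weight $k+|\lambda|$, which is not bounded in the converse direction.
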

Theorems \ref{thm:tateornot} and Theorem \ref{thm:interiornontate} should be compared with analogous results on moduli spaces of curves. For example, $\#M_g(\mathbb{F}_q)$ is polynomial in $q$ if and only if $g\leq 8$, and $\#\Mbar_{g,n}(\mathbb{F}_q)$ is not polynomial for $g\geq 16$ \cite{CLPWpoly,CLPWFA}.

Theorems \ref{thm:main}, \ref{thm:tateornot}, \ref{thm:interiornontate} and Corollary \ref{cor:purepart} concern the $\ell$-adic cohomology as a Galois representation. Using Hodge--Tate comparison isomorphisms, we can also put strong restrictions on the $(p,q)$ with $p+q\leq 22$ such that $H^{p,q}(\Xbar_{g,s})\subset H^{p+q}(\Xbar_{g,s},\C)$ is nonzero. To state the result, we first introduce some notation. For each automorphic representation $\Delta$ in Theorem \ref{thm:ChenevierLannes}, there is an associated pure real Hodge structure $\mathrm{Hdg}(\Delta)$. We have $\mathrm{Hdg}(\Delta_w)^{p,q}=0$ for $(p,q)\notin \{(w,0),(0,w)\}$. We have $\mathrm{Hdg}(\Delta_{w_1,w_2})^{p,q}=0$ for $(p,q)\notin\{(w_1,0),(\tfrac{w_1+w_2}{2},\tfrac{w_1-w_2}{2}),(\tfrac{w_1-w_2}{2},\tfrac{w_1+w_2}{2}),(0,w_1)$\}.

\begin{thm}\label{thm:hodge}
    For any $k\leq 23$ and for any nonsingular toroidal compactification $\Xbar_{g,s}$, the Hodge structure $H^k(\Xbar_{g,s},\R)$ is a sum of effective Tate twists of the Hodge structures $\mathrm{Hdg}(\Delta)$ for $\Delta$ in the list of automorphic representations from Theorem \ref{thm IH}.
\end{thm}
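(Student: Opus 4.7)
The plan is to reduce \cref{thm:hodge} to \cref{thm:main} via $p$-adic Hodge theory. Since $\Xbar_{g,s}$ is smooth and proper over $\Spec(\Z)$, the cohomology $H^k(\Xbar_{g,s},\R)$ is a polarizable pure $\R$-Hodge structure of weight $k$. The category of polarizable pure $\R$-Hodge structures is semisimple, and an object in it is determined up to isomorphism by its Hodge numbers $h^{p,q}$ (equivalently, by the multiset of weights of its associated representation of the Deligne torus). Moreover, $H^k(\Xbar_{g,s})$ is effective, so in any decomposition every summand is effective too. It therefore suffices to verify that the Hodge numbers of $H^k(\Xbar_{g,s})$ agree with those of a sum of effective Tate twists of the Hodge structures $\mathrm{Hdg}(\Delta)$ from \cref{thm IH}.

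To read off Hodge numbers from Galois-theoretic input, fix a prime $p$, set $\ell=p$, and consider $H^k(\Xbar_{g,s,\ol{\Q}_p},\Qp)$ as a representation of the decomposition group at $p$. Because $\Xbar_{g,s}$ has good reduction everywhere, the Hodge--Tate comparison of Faltings and Tsuji applies; for Deligne--Mumford stacks this can be reduced to the scheme case by \'etale descent. It yields that the multiplicities of the Hodge--Tate weights of this Galois representation equal the Hodge numbers $h^{p,q}$ with $p+q=k$. Applying \cref{thm:main}, the semisimplification decomposes into Tate twists of irreducible constituents of Galois representations of the form $S_\ell\langle w+1\rangle$, the four-dimensional spin representations $\sigma(\Delta_{w_1,w_2})$, and $\Sym^2 S_\ell\langle 12\rangle$. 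Each such constituent has classically known Hodge--Tate weights, which coincide with the Hodge types of the corresponding $\mathrm{Hdg}(\Delta)$ --- namely $\{0,w\}$ for $\Delta_w$, the four weights $\{0,\tfrac{w_1-w_2}{2},\tfrac{w_1+w_2}{2},w_1\}$ for $\Delta_{w_1,w_2}$, and $\{0,11,22\}$ for $\Sym^2\Delta_{11}$. A Tate twist by $(-n)$ shifts both the Hodge--Tate weights and the Hodge types in parallel.

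Combining these observations, the multiset of Hodge numbers of $H^k(\Xbar_{g,s})$ is precisely that of a direct sum of effective Tate twists of the $\mathrm{Hdg}(\Delta)$'s in the list. Invoking the reconstruction of a pure $\R$-Hodge structure from its Hodge numbers established in the first paragraph, this upgrades to an isomorphism of $\R$-Hodge structures, proving the theorem.

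The main technical point is the routine but necessary extension of the Hodge--Tate comparison to smooth proper Deligne--Mumford stacks, which should be cited carefully; the remaining compatibilities between Hodge types of $\mathrm{Hdg}(\Delta)$ and Hodge--Tate weights of the associated Galois representations are standard facts about the motives attached to elliptic and Siegel modular forms on the (short) list in Theorems \ref{thm:ChenevierLannes} and \ref{thm IH}.
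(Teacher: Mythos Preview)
Your approach is essentially the paper's: reduce to the $\ell$-adic result via the Hodge--Tate comparison isomorphism, then read off Hodge numbers from Hodge--Tate weights, using that a polarizable pure real Hodge structure is determined by its Hodge numbers. The paper invokes Corollary~\ref{cor:purepart} rather than Theorem~\ref{thm:main}, but since Hodge--Tate weights are additive in short exact sequences this makes no difference.

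There is, however, a gap that the paper addresses explicitly and you elide. Theorem~\ref{thm:main} only says that the semisimplification is a sum of Tate twists of \emph{irreducible constituents} of the Galois representations attached to the various $\Delta$. If, for your chosen prime, one of the four-dimensional representations attached to $\Delta_{w_1,w_2}$ happens to be reducible, you have no control over which proper subquotients appear or with what multiplicities, and the resulting multiset of Hodge--Tate weights need not assemble into Tate twists of the full four-element set $\{0,\tfrac{w_1-w_2}{2},\tfrac{w_1+w_2}{2},w_1\}$. Your sentence ``each such constituent has classically known Hodge--Tate weights, which coincide with the Hodge types of the corresponding $\mathrm{Hdg}(\Delta)$'' is only correct once you know the constituent \emph{is} the full representation, i.e.\ once irreducibility is established.

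The fix is to choose $\ell$ so that every Galois representation on the list is absolutely irreducible. For $S_\ell\langle w+1\rangle$ and its symmetric powers this holds for all $\ell$ (the level-one eigenforms are non-CM of weight $\geq 2$). For the four-dimensional representations attached to $\Delta_{w_1,w_2}$ it is known for all but finitely many $\ell$ by a theorem of Ramakrishnan; one such $\ell$ suffices. This is exactly the argument the paper gives.
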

We obtain stronger (non)vanishing results for the spaces of holomorphic forms $H^{p,0}(\Xbar_{g,s})$.
\begin{thm}\label{thm:holomorphic}
Let $\Xbar_{g,s}$ be a nonsingular toroidal compactification of $X_{g,s}$. If $g\geq 3$, then $H^{k,0}(\Xbar_{g,s})=0$ for all $s\geq 0$ and $0<k\leq 21$ or $k=23$.
\begin{enumerate}
    \item If $g = 3$, then $H^{22,0}(\Xbar_{g,s})\neq 0$ if and only if $s \geq 8$.
    \item If $4 \leq g \leq 7$, then $H^{22,0}(\Xbar_{g,s})\neq 0$ if and only if $s \geq c(g)$. 
    \item If $g \geq 8$, then $H^{22,0}(\Xbar_{g,s})= 0$.
\end{enumerate}
\end{thm}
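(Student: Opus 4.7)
The plan is to combine \cref{thm:hodge} with a case analysis of eligible Hodge types, noting at the outset that $H^{k,0}$ of a smooth projective variety is a birational invariant, so the statement does not depend on the choice of smooth toroidal compactification. A Tate twist $\mathrm{Hdg}(\Delta)(-m)$ shifts bidegrees $(p,q)$ to $(p+m,q+m)$, so a contribution to $H^{k,0}$ forces $m=0$, and then $\mathrm{Hdg}(\Delta)$ itself must have a nonzero $(k,0)$ piece. Inspecting the list of $\Delta$ in \cref{thm IH}, for $0<k\leq 23$ the only candidates are $\Delta_k$ with $k\in\{11,15,17,19,21\}$, the two $\Delta_{23}^{(i)}$, the four-dimensional $\Delta_{k,w_2}$ with $k\in\{19,21,23\}$, and $\Sym^2\Delta_{11}$ at $k=22$. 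For the remaining values of $k$ in the stated range the desired vanishing of $H^{k,0}(\Xbar_{g,s})$ is then immediate.

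Next I would handle the odd cases $k\in\{11,15,17,19,21,23\}$ with $g\geq 3$ by showing that the $\Delta_k$ and $\Delta_{k,w_2}$, whenever they appear in $H^k(\Xbar_{g,s},\Qellbar)$, do so only with a strictly positive Tate twist. The key input is the detailed Arthur--Langlands analysis behind \cref{thm:main}: each parameter $\psi$ contributing to $\IH^\bullet(A_g^\Sat,\V_\lambda)$ carries a prescribed archimedean infinitesimal character which determines the Hodge--Tate weights of the attached Galois representation, and hence the precise Tate twist at which each constituent realizes. Transferring from $\IH^\bullet(A_g^\Sat,\V_\lambda)$ to $H^\bullet(\Xbar_{g,s},\Qellbar)$ via the boundary stratification of $\Abar_g$ and the Leray spectral sequence for $\Xbar_{g,s}\to\Abar_g$, a direct inspection of the finite list of eligible parameters shows that no $\Delta_k$ or $\Delta_{k,w_2}$ realizes at untwisted Hodge type $(k,0)$ for $g\geq 3$.

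The essential case is $k=22$ with $\Sym^2\Delta_{11}$. The endoscopic Arthur parameter $\Sym^2\Delta_{11}\oplus[1][2g-3]$ on $\Sp_{2g}$ contributes to $\IH^\bullet(A_g^\Sat,\V_\lambda)$ for certain explicit dominant weights $\lambda$ of $\Sp_{2g}$, and its $(22,0)$-realizations in $\Xbar_{g,s}$ arise through the Leray spectral sequence for $\Xbar_{g,s}\to\Abar_g$, whose fiber cohomology decomposes as the $\Sp_{2g}$-representation $\bigwedge^\bullet(\Std^{\oplus s})$. A $(22,0)$-class thus exists precisely when some $\lambda$ for which this endoscopic parameter contributes occurs as a summand of $\bigwedge^\bullet(\Std^{\oplus s})$---a finite branching question. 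For $g\geq 8$ no suitable $\lambda$ is available for any $s$, so $H^{22,0}(\Xbar_{g,s})=0$. For $4\leq g\leq 7$ the requisite $\lambda$ has low weight and the threshold agrees with $c(g)$ from \cref{thm:tateornot}. For $g=3$, the small rank of $\Sp_6$ forces $\lambda$ to have larger weight, so the corresponding summand appears in $\bigwedge^\bullet(\Std^{\oplus s})$ first at $s=8$, rather than at $c(3)=6$.

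The hardest step is identifying the precise dominant weights $\lambda$ for which $\Sym^2\Delta_{11}\oplus[1][2g-3]$ contributes to $\IH^\bullet(A_g^\Sat,\V_\lambda)$, and then for $g=3$ pinning down the branching threshold $s=8$: this requires ruling out would-be contributions at $s=6,7$ and exhibiting an explicit nonzero holomorphic form at $s=8$.
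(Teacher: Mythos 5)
Your proposal is correct and follows essentially the same route as the paper: reduce to \emph{untwisted} contributions of $\IH^\bullet(A_g^\Sat,\V_\lambda)$ with $\lambda_1\le s$ (boundary strata are automatically killed because they carry a Tate twist $m\ge\binom{r+1}{2}+rs\ge 1$, cf.\ Proposition \ref{prop:allsubquotients}), read off the surviving parameters from the tables, and obtain non-vanishing from the decomposition-theorem injection of \cref{hansen lemma} together with the Hodge--Tate comparison. Two small corrections: the relevant parameters are $\Delta_{11}[4]\oplus[2g-7]$ (for $4\le g\le 7$, with $\lambda=(7-g,\dots,7-g,0,\dots,0)$) and $\Delta_{11}[2]\oplus(\Sym^2\Delta_{11})[1]$ (for $g=3$, with $\lambda=(8,4,4)$), not ``$\Sym^2\Delta_{11}\oplus[1][2g-3]$'' (which has the wrong dimension), and producing an actual $(22,0)$-class requires the injection $\IH^{\bullet-q}(A_g^\Sat,R^q\pi^s_*\Qell)\hookrightarrow H^\bullet(\Xbar_{g,s},\Qell)$ rather than the Leray spectral sequence alone.
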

 Dimension formulas for spaces of holomorphic forms $H^{k,0}(\Xbar_{g,s})$ for arbitrary $k$ and $s$ when $g=1,2$ can be extracted from the literature. See Remark \ref{rem:g12}.

\subsection{Plan of the paper}
In Section \ref{sec:Satake}, we discuss the intersection cohomology of the Satake compactification in terms of automorphic representation theory. In particular, we prove Theorems \ref{thm:lowdegintcoh} and \ref{thm IH}. We include Tables \ref{tab:rest}, \ref{tab:C}, and \ref{tab:DEF}, which contain the full details of the exceptional cases in Theorem \ref{thm IH}. In Section \ref{sec:compact}, we recall the toroidal compactifications of $A_g$ and $X_{g,s}$ and their stratifications according to cones in the corresponding cone decompositions. Then, in Section \ref{sec:stratacoh}, we explain how to compute the cohomology of the strata as a Galois representation in terms of intersection cohomology groups with twisted coefficients of the Satake compactification, which allows us to prove Theorem \ref{thm:main} and hence Corollary \ref{cor:purepart}. In Section \ref{sec:nontate}, we apply the results of Sections \ref{sec:Satake} and \ref{sec:stratacoh} to prove Theorems \ref{thm:tateornot}.  Section \ref{sec:Hodge} contains the proofs of Theorems \ref{thm:interiornontate}, \ref{thm:hodge}, and \ref{thm:holomorphic}. Finally, in Section \ref{sec:L2}, we sketch an alternate proof using $L^2$ cohomology of a weaker form of our main results. 

\subsection*{Acknowledgments} We are grateful to Ga\"etan Chenevier, Jeremy Feusi, Sam Grushevsky, Sophie Morel, Johannes Schmitt, and Pim Spelier for helpful discussions related to this work. S.C. was supported by a Hermann-Weyl-Instructorship from the Forschungsinstitut f\"ur Mathematik at ETH Z\"urich and the SNSF Ambizione grant PZ00P2\_223473. D.P.\ was supported by a Wallenberg Scholar fellowship. 

\subsection*{Disclaimer}

The results of this paper rely on Arthur's endoscopic classification \cite{arthurclassification}, which is not yet completely unconditional.
Thanks to \cite{AGIKMS} the only remaining result to be proved is the (standard and non-standard) weighted fundamental lemma for Lie algebras over positive characteristic local fields (generalizing \cite{chaudouard_laumon_wfl2}).
On a positive note, we understand that Connor Halleck-Dub\'e is making good progress on this generalization, so one can be hopeful that the results of \cite{arthurclassification} will soon become unconditional.

\section{Intersection cohomology of the Satake compactification via automorphic representations}\label{sec:Satake}
The \emph{Satake compactification} $A_g^{\Sat}$ is a highly singular compactification of $A_g$ as a stack over $\Spec \Z$. Set-theoretically, it admits a stratification
\begin{equation}\label{satakestratification}
    A_g^{\Sat}=\coprod_{0\leq k\leq g} A_k.
\end{equation}
On the level of the coarse moduli space over the complex numbers, it is the Proj of the ring of Siegel modular forms of degree $g$. In this section, we explain how to compute its low degree intersection cohomology with symplectic coefficients via automorphic representation theory. In particular, we prove Theorems \ref{thm:lowdegintcoh} and \ref{thm IH}.

Let $\lambda$ be a dominant weight of $\Sp_{2g}$, i.e.\ a tuple of integers $(\lambda_1,\dots,\lambda_g)$ such that $\lambda_1 \geq \lambda_2\geq \dots \geq \lambda_g \geq 0$.
Denote $|\lambda|=\lambda_1+\dots+\lambda_g$.
For each such $\lambda$, there is an irreducible representation $V_\lambda$ of $\mathrm{Sp}_{2g}$ (as algebraic group over $\Q$) of highest weight $\lambda$, which extends to a representation of $\GSp_{2g}$ by letting $z I_{2g} \in Z(\GSp_{2g})$ act by $z^{-|\lambda|}$.
We consider $\Qell \otimes_\Q V_\lambda$ as a continuous representation of $\GSp_{2g}(\Qell)$, inducing an $\ell$-adic local system on $A_g$, that we simply denote by $\V_{\lambda}$.
Then $\V_{\lambda}$ is a lisse sheaf of weight $|\lambda|$.
We will be interested in the intersection cohomology $\IH^\bullet(A_g^\Sat,\V_\lambda)$; that is, the cohomology of the intermediate extension of $\V_\lambda$ from $A_g$ to $A_g^\Sat$. 

We choose a field isomorphism $\iota: \C \simeq \Qellbar$ (only its restriction to the algebraic closure of $\Q$ actually plays a role) and fix it for the whole paper.
Let $\rho$ be the half-sum of positive roots. We set  $\tau = \lambda + \rho = (\lambda_1 + g, \dots, \lambda_g + 1)$. Theorem 4.7.2 in \cite{taibi_ecAn} (using Remarks 4.3.6 and 4.3.7 loc.\ cit.\ to reduce to the case where the central character of $V_\lambda$ is trivial, and purity) decomposes
\[ \bigoplus_k \Qellbar \otimes_{\Qell} \IH^k(A_g^\Sat, \V_\lambda) \]
as a (canonical) direct sum over a set $\tPsidut(\Sp_{2g})$ of formal Arthur--Langlands parameters
\[ \psi = \psi_0 \oplus \dots \oplus \psi_r = \pi_0[d_0] \oplus \dots \oplus \pi_r[d_r], \]
which are formal in the sense that $\psi$ is really defined as the set $\{(\pi_i,d_i) \,|\, 0 \leq i \leq r\}$ and the direct sum is only a suggestive notation.
Here, for each $i$, we have an integer $d_i \geq 1$, and a self-dual cuspidal level one algebraic representation $\pi_i \simeq \otimes'_v \pi_{i,v}$ for $\GL_{n_i,\Q}$. To belong to $\tPsidut(\Sp_{2g})$, a formal parameter $\psi$ has to satisfy the following additional condition: denoting by $(w_j(\pi_i))_{1 \leq j \leq n_i} \in \C^{n_i} / \mathfrak{S}_{n_i}$ the infinitesimal character of $\pi_{i,\infty}$, we have an equality of multi-sets
\[ \{ w_j(\pi_i) + (d_i-1)/2 - k \,|\, 0 \leq i \leq r,\, 1 \leq j \leq n_i,\, 0 \leq k \leq d_i-1 \} = \{ \pm(\lambda_1 + g), \dots, \pm (\lambda_g + 1), 0 \}. \]
Note that the right-hand side is really a set, so saying that this is an equality of multi-sets means that all entries on the left-hand side are distinct. In particular, for each $i$ the complex numbers $(w_j(\pi_i) + (d_i-1)/2)_{1 \leq j \leq n_i}$ are distinct integers, and it is natural to order the $w_j$ so that they satisfy $w_1(\pi_i) > \dots > w_{n_i}(\pi_i)$.
This condition on infinitesimal characters also implies that exactly one of the integers $n_i d_i$ is odd, and we may and do impose that this occurs for $i=0$.
Proposition 3.4.7 loc.\ cit.\ associates to each $\psi_i = \pi_i[d_i]$ a family of semisimple conjugacy classes $(\cpsc(\psi_i))_p$ (indexed by all primes $p$) in a certain group isomorphic to $\Spin_{n_i d_i}(\C)$, such that the image by the standard representation of $\cpsc(\psi_i)$ has eigenvalues
\[ \{ \alpha_j(\pi_{i,p}) p^{(d_i-1)/2-k} \,|\, 1 \leq j \leq n_i,\, 0 \leq k \leq d_i-1 \}\]
where $(\alpha_j(\pi_{i,p}))_{1 \leq j \leq n_i}$ are the eigenvalues of the Satake parameter of the unramified representation $\pi_{i,p}$ of $\GL_{n_i}(\Qp)$.
We know the Ramanujan conjecture in this case, i.e.\ that these eigenvalues $\alpha_j(\pi_{i,p})$ have absolute value $1$ (see \cite{Clozel_purity} or \cite{Caraiani_purity}).
The contribution $\sigma_{\psi,\iota}^\IH$ of $\psi$ to intersection cohomology is refined in \cite[Theorem 7.1.3]{taibi_ecAn}, but for the next lemma we only need to know that for any prime $p \neq \ell$ this contribution is unramified at $p$ and the characteristic polynomial of the geometric Frobenius $\Frob_p$ on it is a factor of that of
\[ \iota \left( p^{g(g+1)/4 + |\lambda|/2} \spin(\cpsc(\psi_0) \otimes \dots \otimes \spin(\cpsc(\psi_r)) \right). \]
Note that the eigenvalues of $\sigma_{\psi,\iota}^\IH(\Frob_p)$ are Weil numbers, either by purity of intersection cohomology, or by the fact that we know the Ramanujan conjecture in this case.

\begin{lem}
    The contribution of a parameter $\psi = \oplus_i \pi_i[d_i]$ to $\IH^\bullet(A_g^\Sat, \V_\lambda)$ vanishes in degree less than
    \[ g(g+1)/2 - \sum_{i=0}^r \frac{n_i \lfloor d_i^2 / 2 \rfloor}{4}. \]
\end{lem}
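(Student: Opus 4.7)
My plan is to use the divisibility of Frobenius characteristic polynomials recalled just before the lemma, combined with the purity of $\IH^\bullet(A_g^\Sat, \V_\lambda)$, to reduce the desired vanishing to a lower bound on the weights of Frobenius eigenvalues on $\spin(\cpsc(\psi_0)) \otimes \dots \otimes \spin(\cpsc(\psi_r))$. Since $\V_\lambda$ is pure of weight $|\lambda|$, the group $\IH^k(A_g^\Sat, \V_\lambda)$ is pure of weight $k+|\lambda|$, so every Frobenius eigenvalue there has absolute value $p^{(k+|\lambda|)/2}$. If I can show that the minimum weight of an eigenvalue on the tensor product above is $-M$, then multiplying by $p^{g(g+1)/4 + |\lambda|/2}$ shifts this minimum up by $g(g+1)/2 + |\lambda|$, so any nonzero contribution in degree $k$ must satisfy $k + |\lambda| \geq g(g+1)/2 + |\lambda| - M$, i.e.\ $k \geq g(g+1)/2 - M$.

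Since weights are additive across tensor products, it suffices to find the minimum weight separately for each factor $\spin(\cpsc(\psi_i))$. Fix $i$ and set $N = n_i d_i$. By self-duality of $\pi_i$ the multiset of Satake parameters $\{\alpha_j(\pi_{i,p})\}$ is inversion-invariant, and by Ramanujan each $|\alpha_j(\pi_{i,p})|=1$. Hence the standard-representation eigenvalues $\alpha_j(\pi_{i,p}) p^{(d_i-1)/2 - k}$ of $\cpsc(\psi_i)$ can be organized into pairs $\{\beta_l, \beta_l^{-1}\}$ (plus possibly a leftover $\pm 1$ if $N$ is odd, which does not affect what follows) by matching level $k$ with level $d_i - 1 - k$, and each such pair satisfies $|\beta_l| = p^{|(d_i-1)/2 - k|}$. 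The standard description of the weights of the spin representation then shows that the absolute values of the eigenvalues of $\spin(\cpsc(\psi_i))$ are exactly the numbers $p^{\sum_l \epsilon_l e_l/2}$ for $(\epsilon_l) \in \{\pm 1\}^{\lfloor N/2 \rfloor}$, where $e_l := \log_p|\beta_l| \geq 0$. The minimum is $p^{-\sum_l e_l/2}$, i.e.\ the minimum weight is $-\sum_l e_l$.

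It remains to evaluate $\sum_l e_l$ in closed form. Splitting by parity of $d_i$ and pairing levels $(k, d_i - 1 - k)$: if $d_i = 2m$ then $k$ ranges over $0, \dots, m-1$ with $n_i$ pairs each having $e_l = (2m-1)/2 - k$, giving $\sum_l e_l = n_i m^2/2$; if $d_i = 2m+1$ then $k$ ranges over $0, \dots, m-1$ with $n_i$ pairs each having $e_l = m - k$, while the middle level $k = m$ only contributes pairs of absolute value $1$ (i.e.\ with $e_l = 0$), giving $\sum_l e_l = n_i m(m+1)/2$. In both cases $\sum_l e_l = n_i \lfloor d_i^2 / 2 \rfloor / 4$. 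Summing over $i$ yields $M = \sum_i n_i \lfloor d_i^2 / 2 \rfloor / 4$ and the claimed vanishing range. The only delicate point is the combinatorial bookkeeping of the pairing at the middle level when $d_i$ is odd and of the lone $\pm 1$ when $N$ is odd; both involve only absolute values equal to $1$ and so do not affect the weight bound.
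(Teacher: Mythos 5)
Your proposal is correct and follows essentially the same route as the paper: purity plus the divisibility of Frobenius characteristic polynomials reduces the claim to showing each $\spin(\cpsc(\psi_i))$ has minimal eigenvalue weight $-n_i\lfloor d_i^2/2\rfloor/4$, which both you and the paper obtain by summing the half-log-absolute-values of the standard eigenvalues (using Ramanujan). The only difference is presentational: the paper writes out the torus element explicitly in three cases, while you unify them via the inversion pairing $k \leftrightarrow d_i-1-k$.
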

\begin{proof}
  By purity, it is enough to choose any prime $p \neq \ell$ and keep track of the weights of the eigenvalues of $\Frob_p$ in $\sigma_{\psi,\iota}^\IH$.
  More precisely, it is enough to show that the eigenvalues of each $\spin(\cpsc(\psi_i))$ have weight at least $-\tfrac 1 4 {n_i \lfloor d_i^2 / 2 \rfloor}$.
  We consider the three possible cases separately.
  We compute using the same parametrization of maximal tori in $\Spin$ groups as in \cite[\S 2.2]{taibi_ecAn}.
  To lighten the notation, we denote $\alpha_j = \alpha_j(\pi_{i,p})$.
  Up to renumbering, we may assume $\alpha_j \alpha_{n_i+1-j} = 1$ for all $1 \leq j \leq n_i$.
\begin{enumerate}
\item First assume that \(i=0\), i.e.\ \(n_i d_i\) is odd.
  Then $\cpsc(\psi_0)$ is conjugated to the  element of the maximal torus \(\Tcal_{\Spin_{n_i d_i}}\) of $\Spin_{n_i d_i}(\C)$ given by
  \[ (\alpha_1 p^{(d_i-1)/2}, \dots, \alpha_1 p^{(1-d_i)/2}, \dots, \alpha_{(n_i-1)/2} p^{(d_i-1)/2}, \dots, \alpha_{(n_i-1)/2} p^{(1-d_i)/2}, p^{(d_i-1)/2}, \dots, p, s), \]
  where
  \[ s^2 = \left(\prod_{j=1}^{(n_i-1)/2} \alpha_j \right)^{d_i} p^{(d_i^2-1)/8}. \]
  The eigenvalues of this element in the spin representations are
  \[ s \prod_{(j,k) \in S_1} \alpha_j p^k \prod_{k \in S_2} p^k \]
  as $S_1$ ranges over all subsets of
  \[ \left\{ 1, \dots, \frac{n_i-1}{2} \right\} \times \left\{ \frac{1-d_i}{2}, \dots, \frac{d_i-1}{2} \right\} \]
  and $S_2$ ranges over all subsets of $\{-1, \dots, (1-d_i)/2\}$.
  We see that the smallest weight occurring is
  \[ \frac{d_i^2-1}{8} - \frac{n_i-1}{2} \sum_{k=1}^{(d_i-1)/2} 2k - \sum_{k=1}^{(d_i-1)/2} 2k = \frac{-n_i (d_i^2-1)}{8}. \]
\item Now assume that \(i>0\) (i.e.\ \(n_i\) even) and \(d_i\) is odd.
  Then $\cpsc(\psi_0)$ is conjugated (by the orthogonal group) to the following element of \(\Tcal_{\Spin_{n_i d_i}}\)
  \[ (\alpha_1 p^{(d_i-1)/2}, \dots, \alpha_1 p^{(1-d_i)/2}, \dots, \alpha_{n_i/2} p^{(d_i-1)/2}, \dots, \alpha_{n_i/2} p^{(1-d_i)/2}, s) \]
  where $s^2 = (\prod_j \alpha_j)^{d_i}$.
  Its eigenvalues in the spin representation are
  \[ s \prod_{(j,k) \in S} \alpha_j p^k \]
  where $S$ ranges over all subsets of
  \[ \left\{ 1, \dots, \frac{n_i}{2} \right\} \times \left\{ \frac{1-d_i}{2}, \dots, \frac{d_i-1}{2} \right\}. \]
  We see that the smallest weight occurring is
  \[ -2 \cdot \frac{n_i}{2} \sum_{k=1}^{(d_i-1)/2} k = \frac{-n_i (d_i^2-1)}{8}. \]
\item The case where \(d_i\) is even (thus $i>0$ and $n_i$ is even) is similar and we omit details: we find that the smallest weight occurring is
  \[ \frac{-n_i d_i^2}{8}. \qedhere \]
\end{enumerate}
\end{proof}

We are thus interested in determining, for some fixed $M$, the set of all parameters $\psi \in \tPsidut(\Sp_{2g})$ (for varying $g$ and $\lambda$) satisfying
\begin{equation} \label{eq:first_ineq_M}
  g(g+1)/2 - \sum_i \frac{n_i \lfloor d_i^2 / 2 \rfloor}{4} + |\lambda| \leq M.
\end{equation}
(This will eventually allow us to understand intersection cohomology $\IH^p(A_g^\Sat,\V_\lambda)$ for $p+|\lambda| \leq M$.)
Recall that the vector $\tau$ consists precisely of the positive elements of the set
\[ \{ w_j(\pi_i) + (d_i-1)/2 - k \,|\, 0 \leq i \leq r,\, 1 \leq j \leq n_i,\, 0 \leq k \leq d_i-1 \} . \]
It is convenient to denote by $\tau^{(i)}$ the subtuple containing only the positive elements of the subset
\[ \{ w_j(\pi_i) + (d_i-1)/2 - k \,|\, 1 \leq j \leq n_i,\, 0 \leq k \leq d_i-1 \}. \]
Then $g(g+1)/2 + |\lambda| = |\tau| = \sum_{i=0}^r |\tau^{(i)}|$, so condition \eqref{eq:first_ineq_M} becomes
\begin{equation} \label{eq:refined_ineq_M}
  \sum_{i=0}^r k(\psi_i) \leq M \ \ \ \text{where} \ \ k(\psi_i) := |\tau^{(i)}| - \frac{n_i \lfloor d_i^2 / 2 \rfloor}{4}.
\end{equation}

\begin{lem} \label{lem:k_psi_i_ineq}
  Take $\psi = \psi_0 \oplus \dots \oplus \psi_r \in \tPsidut(\Sp_{2g})$. For each $0 \leq i \leq r$, denoting $r_i = \lfloor n_i/2 \rfloor$, we have
  \[ k(\psi_i) \geq \sum_{j=1}^{r_i} w_j(\pi_i) \geq r_i(r_i+1)/2 \ \ \ \text{if } d_i \text{ is odd}, \]
  \[ k(\psi_i) \geq 2 \sum_{j=1}^{r_i} w_j(\pi_i) - r_i \geq 2 r_i^2 \ \ \ \text{if } d_i \text{ is even}. \]
  In particular, we always have $k(\psi_i) \geq 0$.
\end{lem}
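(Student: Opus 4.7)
The plan is to compute $|\tau^{(i)}|$ in closed form and then verify both inequalities using lower bounds on the $w_j(\pi_i)$'s dictated by the multi-set condition. The multi-set equality in the definition of $\tPsidut(\Sp_{2g})$ forces the $n_i d_i$ numbers $w_j(\pi_i) + (d_i-1)/2 - k$ to be pairwise distinct, which immediately gives $|w_j(\pi_i) - w_{j'}(\pi_i)| \geq d_i$ for all $j \neq j'$. Moreover, since the right-hand side of that equality consists of integers, the $w_j(\pi_i)$ are integers when $d_i$ is odd and half-integers (in $\tfrac 1 2 + \Z$) when $d_i$ is even; in the latter case $n_i$ must be even, as otherwise $w_{(n_i+1)/2}(\pi_i) = 0$ by self-duality, contradicting half-integrality. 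Self-duality of $\pi_i$ also gives $w_j(\pi_i) = -w_{n_i+1-j}(\pi_i)$ under the ordering $w_1 > w_2 > \dots > w_{n_i}$; the gap $w_{r_i}(\pi_i) - w_{r_i+1}(\pi_i) \geq d_i$ then yields $w_{r_i}(\pi_i) \geq d_i$ in the $n_i$-odd case (which forces $i = 0$ and $w_{r_i+1}(\pi_i) = 0$) and $w_{r_i}(\pi_i) \geq (d_i+1)/2$ in the $n_i$-even case. Propagating further gaps gives $w_j(\pi_i) \geq w_{r_i}(\pi_i) + (r_i - j) d_i$ for all $1 \leq j \leq r_i$.

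Next I would compute $|\tau^{(i)}|$ directly. For each $1 \leq j \leq r_i$ the inequality $w_j(\pi_i) \geq d_i/2 > (d_i - 1)/2$ ensures that all $d_i$ shifts $w_j(\pi_i) + (d_i-1)/2 - k$ are strictly positive, contributing $d_i w_j(\pi_i)$ to $|\tau^{(i)}|$. Self-duality makes the contribution from indices $j > r_i$ purely negative, except in the $n_i$-odd case where the middle index $j = r_i + 1$ has $w_j(\pi_i) = 0$ and contributes the positive shifts $1, 2, \dots, (d_i-1)/2$, summing to $(d_i^2 - 1)/8$. Hence
\[ |\tau^{(i)}| = d_i \sum_{j=1}^{r_i} w_j(\pi_i) + \mathbf{1}_{n_i \text{ odd}} \cdot \frac{d_i^2 - 1}{8}. \]
Substituting into the definition of $k(\psi_i)$ and using $\lfloor d_i^2/2 \rfloor = (d_i^2-1)/2$ or $d_i^2/2$ as appropriate, one obtains a single formula in each parity of $d_i$:
\[ k(\psi_i) = d_i \sum_{j=1}^{r_i} w_j(\pi_i) - \frac{r_i (d_i^2 - 1)}{4} \quad (d_i \text{ odd}), \qquad k(\psi_i) = d_i \sum_{j=1}^{r_i} w_j(\pi_i) - \frac{r_i d_i^2}{4} \quad (d_i \text{ even}). \]

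The final step is to verify the inequalities. For $d_i$ odd, $k(\psi_i) \geq \sum_j w_j(\pi_i)$ is equivalent to $(d_i - 1) \sum_j w_j(\pi_i) \geq r_i(d_i - 1)(d_i + 1)/4$, which is trivial for $d_i = 1$ and for $d_i \geq 3$ follows from $\sum_j w_j(\pi_i) \geq r_i(d_i + 1)/2$, itself an immediate consequence of the bound on $w_j(\pi_i)$ from the first step. For $d_i$ even, $k(\psi_i) \geq 2 \sum_j w_j(\pi_i) - r_i$ rearranges to $(d_i - 2) \sum_j w_j(\pi_i) \geq r_i(d_i - 2)(d_i + 2)/4$, trivial for $d_i = 2$ and analogous for $d_i \geq 4$. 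For the second inequalities: in the $d_i$-odd case $w_1 > \dots > w_{r_i}$ are $r_i$ distinct positive integers, so $\sum_j w_j(\pi_i) \geq r_i(r_i+1)/2$; in the $d_i$-even case they are positive half-integers with $w_{r_i}(\pi_i) \geq 3/2$ and consecutive gaps at least $2$, hence $\sum_j w_j(\pi_i) \geq r_i^2 + r_i/2$, giving $2\sum_j w_j(\pi_i) - r_i \geq 2 r_i^2$. There is no serious analytic obstacle; the main bookkeeping difficulty is tracking the parity interactions between $n_i$ and $d_i$ and handling the half-integer shift correctly when $d_i$ is even.
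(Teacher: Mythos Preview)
Your proof is correct and follows essentially the same strategy as the paper's: compute $|\tau^{(i)}|$ in closed form (yielding $k(\psi_i) = d_i\sum_j w_j - r_i(d_i^2-1)/4$ or $d_i\sum_j w_j - r_i d_i^2/4$), and then bound it below using the gap condition $|w_j - w_{j'}| \geq d_i$ together with self-duality.

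The organizational difference is this: the paper, having obtained the same closed form, views $k(\psi_i)$ as a downward-opening quadratic in $d_i$ and argues that its minimum over the set $R$ of admissible values of $d$ is attained at the smallest element ($d=1$ or $d=2$), where it equals exactly $\sum_j w_j$ or $2\sum_j w_j - r_i$. Your argument is more direct: you factor $k(\psi_i) - (\text{target})$ as $(d_i-1)\bigl(\sum_j w_j - r_i(d_i+1)/4\bigr)$ or $(d_i-2)\bigl(\sum_j w_j - r_i(d_i+2)/4\bigr)$ and verify nonnegativity of the second factor from $w_{r_i} \geq (d_i+1)/2$. This avoids the detour through the set $R$ and the vertex-of-the-parabola analysis, and it treats the three parity cases more uniformly. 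The paper's framework, on the other hand, makes transparent \emph{why} the bound is sharp precisely at $d_i \in \{1,2\}$, which motivates the shape of the lemma. Both approaches rest on the same two ingredients (the explicit formula for $|\tau^{(i)}|$ and the gap lower bounds on the $w_j$), so there is no substantive disagreement.
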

\begin{proof}
  In the proof we set $w_j = w_j(\pi_i)$ to lighten the notation.
  \begin{enumerate}
    \item We first consider the case where $n_i$ and $d_i$ are even, so that $r_i = n_i/2$.
      We have
      \[ |\tau^{(i)}| - \frac{n_i d_i^2}{8} = \frac{d_i}{2} \left(2 \sum_{j=1}^{r_i} w_j - \frac{r_i d_i}{2} \right). \]
      Let $R$ be the set of even integers $d \geq 2$ such that $\pi_i[d]$ is regular, i.e.\ such that the multi-set
      \[ \{ w_j + (d-1)/2 - k \,|\, 1 \leq j \leq n_i,\, 0 \leq k \leq d-1 \} \]
      is a set.
      It is of the form $\{2, \dots, d_{\max}\}$ and contains $d_i$.
      For $d \in R$ and $0 \leq j \leq r_i-1$ we have $2w_{r_i-j} \geq (2j+1)d+1$, implying
      \begin{equation} \label{eq:ineq_sum_wj_d_in_R}
        2 \sum_{j=1}^{r_i} w_j \geq r_i (d+1) + r_i (r_i-1) d = d r_i^2 + r_i.
      \end{equation}
      We deduce
      \[ \frac{4}{r_i} \sum_{j=1}^{r_i} w_j - d_{\max} \geq d_{\max}(2r_i-1) + 2 > 2, \]
      which implies that the minimum of
      \[ d \longmapsto d \left(2 \sum_{j=1}^{r_i} w_j - \frac{r_i d}{2} \right) = \frac{r_i}{2} d \left( 4 \frac{\sum_{j=1}^{r_i} w_j}{r_i} - d \right) \]\
      on $R$ is reached for $d=2$.
      Thus
      \[ |\tau^{(i)}|-\frac{n_i d_i^2}{8} \geq 2 \sum_{j=1}^{r_i} w_j - r_i \geq \frac{r_i(r_i+1)}{2}, \]
      where the last inequality follows from \eqref{eq:ineq_sum_wj_d_in_R} for $d=2$.
    \item Next consider the case where $d_i$ is odd and $n_i = 2r_i$ is even (because of the level one assumption $r_i$ is even).
      We compute
      \[ |\tau^{(i)}| - n_i \frac{d_i^2-1}{8} = \frac{d_i}{2} \left( 2\sum_{j=1}^{r_i} w_j - \frac{r_id_i}{2} \right) + r_i/4. \]
      Let $R$ be the set of odd integers such that $\pi[d]$ is regular. It is of the form $\{1, \dots, d_{\max}\}$.
      Similarly to the first case we obtain for $d \in R$ and $0 \leq j \leq r_i-1$ the inequality $2w_{r_i-j} \geq (2j+1) d_i + 1$, implying for $d \in R$
      \[ 2\sum_{j=1}^{r_i} w_j \geq d r_i^2 + r_i. \]
      As in the first case, this implies that the minimum of
      \[ d \longmapsto \frac{d}{2} \left( 2 \sum_{j=1}^{r_i} w_j - \frac{r_i d}{2} \right) \]
      on $R$ is reached for $d=1$ and so
      \[ |\tau^{(i)}| - n_i \frac{d_i^2-1}{8} \geq \sum_{j=1}^{r_i} w_j \geq \frac{r_i(r_i+1)}{2}. \]
    \item Finally consider the case where $n_i = 2r_i+1$ and $d_i$ are odd.
      We again compute
      \[ |\tau^{(i)}| - n_i \frac{d_i^2-1}{8} = d_i \left( \sum_{j=1}^{r_i} w_j - \frac{r_id_i}{4} \right) + \frac{r_i}{4}. \]
      For $r_i=0$ (i.e.\ $\pi_i$ the trivial representation for $\mathrm{GL}_1$) this is zero, so we may assume $r_i>0$.
      Let $R = \{1,\dots,d_{\max}\}$ be the set of odd integers $d$ such that $\pi_i[d]$ is regular.
      For $d \in R$ and $0 \leq j \leq r_i-1$ we have $w_{r_i-j} \geq (j+1) d$ and so
      \[ \sum_{j=1}^{r_i} w_j \geq d_{\max} \frac{r_i(r_i+1)}{2}, \]
      which implies
      \[ \frac{4}{r_i} \sum_{j=1}^{r_i} w_j - d_{\max} \geq n_i d_{\max} > 1. \]
      As in the previous cases we deduce
      \[ |\tau^{(i)}| - n_i \frac{d_i^2-1}{8} \geq \sum_{j=1}^{r_i} w_j \geq \frac{r_i(r_i+1)}{2}. \qedhere\]
\end{enumerate}
\end{proof}

\begin{prop} \label{pro:params_up_to_23}
  The list of $\psi \in \tPsidut(\Sp_{2g})$ not of the form $[2g+1]$ satisfying \eqref{eq:first_ineq_M} for $M=23$ is given in the third column of Tables \ref{tab:rest}, \ref{tab:C}, and \ref{tab:DEF}.
\end{prop}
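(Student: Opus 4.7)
The plan is to combine the numerical lower bounds in \cref{lem:k_psi_i_ineq} with (extensions of) the Chenevier--Lannes classification recalled in \cref{thm:ChenevierLannes} and \cref{rem:77}, reducing the statement to an explicit finite enumeration of level one self-dual algebraic cuspidal constituents.

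First, I would use $\sum_i k(\psi_i) \leq 23$ together with $k(\psi_i) \geq 0$ to bound each rank. \cref{lem:k_psi_i_ineq} gives $r_i(r_i+1)/2 \leq 23$ when $d_i$ is odd, hence $r_i \leq 6$ and $n_i \leq 13$; and $2 r_i^2 \leq 23$ when $d_i$ is even, hence $r_i \leq 3$ and $n_i \leq 6$. The sharper intermediate bounds proved inside \cref{lem:k_psi_i_ineq}, in particular $k(\psi_i) \geq \sum_{j=1}^{r_i} w_j(\pi_i)$ when $d_i$ is odd, simultaneously bound the motivic weight of each non-trivial $\pi_i$. This places every candidate $\pi_i$ in the regime where \cref{thm:ChenevierLannes} together with the partial classifications of \cite{chenevierlannes,cheneviertaibi} provide a short explicit list of possibilities.

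Second, I would tabulate the admissible individual summands $\psi_i = \pi_i[d_i]$ together with the \emph{exact} value of $k(\psi_i)$, computed from its defining formula using the explicit infinitesimal weights of $\pi_i$. The trivial summand $1_{\PGL_1}[2a+1]$ always satisfies $k=0$ and can therefore appear freely. Among the non-trivial summands surviving the previous step, the relevant constituents $\pi_i$ are the automorphic representations listed in \cref{thm:ChenevierLannes}, the weight-$23$ forms $\Delta_{23}^{(1)}$, $\Delta_{23}^{(2)}$, $\Delta_{23,7}$ and $\Delta_{23,9}$ appearing in \cref{thm IH}, and a small number of further combinations allowed by the parity constraints (exactly one $n_i d_i$ is odd, and $r_i$ must be even when $n_i$ is even and $d_i$ is odd).

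Third, I would enumerate all unordered tuples $\psi = \bigoplus_i \psi_i$ with $\sum_i k(\psi_i) \leq 23$, imposing the regularity requirement that $\{w_j(\pi_i)+(d_i-1)/2-k\}$ be a set. This discreteness condition rules out most formal combinations and typically forces $\psi$ to consist of a single non-trivial summand plus at most one trivial summand $1_{\PGL_1}[2a+1]$. From each surviving $\psi$ one recovers $g$ and $\lambda$ via $\tau = \lambda + \rho$; the resulting list is exactly the third column of \cref{tab:rest,tab:C,tab:DEF}. The family $\psi = [2g+1]$ excluded from the statement is precisely the case with no non-trivial summand.

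The main obstacle is the sheer volume of the case analysis: the inequalities are finite and sharp but not short, and verifying regularity across several summands must be performed individually for each candidate. In practice the enumeration is carried out with the aid of the databases of self-dual level one algebraic cuspidal automorphic representations of bounded motivic weight compiled in \cite{chenevierlannes,cheneviertaibi}.
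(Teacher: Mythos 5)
Your proposal follows essentially the same route as the paper: apply \cref{lem:k_psi_i_ineq} to bound $r_i$ (and hence, via the sharper inequalities $k(\psi_i)\geq \sum_j w_j(\pi_i)$, the motivic weights of the constituents), consult the classification of level one self-dual algebraic cuspidal representations to reduce to a short explicit list of admissible summands $\pi_i[d_i]$, and then enumerate combinations subject to regularity and $\sum_i k(\psi_i)\leq 23$. The paper carries out the intermediate case analysis you only sketch (e.g.\ explicitly ruling out $r_i\in\{2,3\}$ for $d_i$ even and $r_i\in\{4,5\}$ for $d_i$ odd using the tables of \cite{chenevierlannes,taibi,cheneviertaibi}), but the strategy and the key lemma are identical.
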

\begin{proof}
  We classify all $\psi \in \tPsidut(\Sp_{2g})$ satisfying condition \eqref{eq:first_ineq_M} for $M=23$ using Lemma \ref{lem:k_psi_i_ineq}.
  This lemma first implies $r_i \leq 3$ for $d_i$ even and $r_i \leq 6$ for $d_i$ odd.
  \begin{itemize}
  \item Consider first the case where $d_i$ is even.
    If $r_i=3$ then $\sum_{j=1}^{3} w_j(\pi_i) \leq 13$, implying $w_1(\pi_i) \leq \tfrac{21}{2}$.
    It follows from \cite[Theorem F]{chenevierlannes} that the case $r_i=3$ does not occur.
    If $r_i=2$ then $w_1(\pi_i) + w_2(\pi_i) \leq 12$, and perusing the table of self-dual algebraic regular cuspidal automorphic representations for $\GL_{4,\Q}$ of symplectic type up to weight $23$ shows
    \[ \pi_i \in \{ \Delta_{19,7}, \Delta_{21,5}, \Delta_{21,9}, \Delta_{21,13} \}, \]
    but none of these representations satisfies $\sum_{j=1}^2 w_j \leq 12$, so the case $r_i=2$ is ruled out as well.
    Finally, for $r_i=1$ we find that $\pi_i$ corresponds to an eigenform in $S_{2k}(\SL_2(\Z))$ for $2k \leq 24$, i.e.\ $\pi_i$ is either $\Delta_w$ for $w \in \{11,15,17,19,21\}$ or one of the two $\Delta_{23}^{(j)}$, $j \in \{1,2\}$.
    We have $d_i=2$ except for $\Delta_{11}$, where we can also have $d_i=4$.
  \item Next we consider the case where $d_i$ is odd, excluding the case $\pi_i = 1$ (i.e.\ $r_i=0$).
    We now have $\sum_{j=1}^{r_i} w_j(\pi_i) \leq M$, in particular $w_1(\pi_i) \leq M - r_i(r_i-1)/2$.
    For $r_i \geq 6$ this imposes $w_1(\pi_i) \leq 8$ and this does not occur by \cite[Theorem F]{chenevierlannes}.
    For $r_i \leq 5$ perusing the tables of self-dual algebraic regular cuspidal automorphic representations for $\GL_{n_i,\Q}$ of orthogonal type (\cite{taibi} or \cite{cheneviertaibi}) shows that there is no such representation satisfying $\sum_{j=1}^{r_i} w_j(\pi_i) \leq 25$ if $r_i>3$.
    (Presumably the cases $r_i \in \{4,5\}$ could also be ruled out using the method of \cite[\S 2.4]{cheneviertaibi}.)
    For $r_i \leq 3$ we find the following possibilities for $\pi_i$ (see \cite[\S 4]{chenevierrenard} and \cite[\S 3.5]{taibi_ecAn} for the three automorphic lifts involved):
    \begin{itemize}
    \item $\pi_i = \Sym^2 \Delta_{w}$ for $n_i = 3$, where $w \in \{11,15,17,19,21,23\}$ (and $\Delta_{23}$ is understood as either one of $\Delta_{23}^{(j)}$, $j \in \{1,2\}$),
    \item $\pi_i = \Delta_{w_1} \otimes \Delta_{w_2}$ for $n_i=4$, where $23 \geq w_1 > w_2$ (idem),
    \item $\pi_i = \Lambda^*(\Delta_{w_1,w_2})$ where $23 \geq w_1$.
    \end{itemize}
    We find that the only possibility is $d_i=1$ in all these cases.
  \end{itemize}
  To sum up, the constituents of $\psi$ have to be in the following list
  \begin{itemize}
  \item $[2d+1]$ for $d \geq 0$,
  \item $\Delta_w[2]$ for $w \in \{11,15,17,19,21,23\}$,
  \item $\Delta_{11}[4]$,
  \item $(\Sym^2 \Delta_w)[1]$ for $w \in \{11,15,17,19,21,23\}$,
  \item $(\Delta_{w_1} \otimes \Delta_{w_2})[1]$ for $23 \geq w_1 > w_2$,
  \item $\Lambda^*(\Delta_{w_1,w_2})[1]$ for
    \[ (w_1,w_2) \in \{ (19,7), (21,5), (21,9), (21,13), (23,7), (23,9), (23,13). \} \]
  \end{itemize}
  It only remains to combine some of these constituents (with distinct weights and exactly one odd-dimensional constituent) to form parameters $\psi \in \tPsidut(\Sp_{2g})$ and only keep those satisfying \eqref{eq:refined_ineq_M} for $M=23$.
\end{proof}

Now we want to compute the contribution to intersection cohomology in each case.
For $d \geq 0$ denote
\[ T_\ell \langle d \rangle := \bigotimes_{i=1}^d (\Qell \oplus \Qell(-i)). \]
Note that this is the contribution of the ``trivial" parameter $[2d+1]$ to $\IH^\bullet(A_d^\Sat, \Qell)$.
Denote by $S_\ell\langle k\rangle$ the $\ell$-adic Galois representation attached to cusp forms for $\SL_2(\Z)$ of weight $k$, and by $s\langle k \rangle = \tfrac{1}{2} \dim_{\Qell} S_\ell \langle k \rangle$ the dimension of this space of cusp forms.
Similarly for odd integers $w_1 > w_2 > 0$ denote by $S_{\ell} \langle \tfrac{w_1+w_2-4}{2}, \tfrac{w_1-w_2-2}{2} \rangle$ the direct sum of the Galois representations associated to the self-dual cuspidal automorphic representations $\Delta_{w_1,w_2}^{(i)}$ for $\PGL_{4,\Q}$ with infinitesimal character $(\pm \tfrac{w_1}{2}, \pm \tfrac{w_2}{2})$ at the real place.
In the notation of \cite[Theorem 6.3.1]{taibi_ecAn}
\[ \Qellbar \otimes_{\Qell} S_\ell \langle \tfrac{w_1+w_2-4}{2}, \tfrac{w_1-w_2-2}{2} \rangle \simeq \bigoplus_i (\std \circ \rho_{\Delta_{w_1,w_2}^{(i)},\iota}^{\GSp})(\tfrac{1-w_1}{2}). \]
(The right-hand side a priori only defines a representation over $\Qellbar$ but it may be cut out using rational Hecke operators inside $\Qellbar \otimes_{\Qell} \IH^3(A_2^\Sat, \V_\lambda)$ for $\lambda = (\tfrac{w_1+w_2-4}{2}, \tfrac{w_1-w_2-2}{2})$, and is thus defined over $\Qell$.)
Note that the normalization (Tate twist) is so that this Galois representation is effective (its Hodge-Tate weights are non-negative), and it is pure of weight $w_1$.

We also need to recall the definition of the signs $(u_i(\psi))_{1 \leq i \leq r}$ associated to $\psi = \psi_0 \oplus \dots \oplus \psi_r \in \tPsidut(\Sp_{2g})$ appearing in \cite[Theorems 4.7.2 and 7.1.3]{taibi_ecAn}.
For $1 \leq i \leq r$ recall that $n_i$ is even and let $J_i$ be the subset of $\{1, \dots, g\}$ such that we have
\[ \{ w_j(\pi_i) + \tfrac{d_i-1}{2} - k \,|\, 1 \leq j \leq \tfrac{n_i}{2}, \, 0 \leq k \leq d_i-1 \} = \{ \tau_j \,|\, j \in J_i \}. \]
Denoting by $f_i$ the cardinality of $\{j \in J_i \,|\, j \text{ even}\}$ we have
\[ u_i(\psi) = (-1)^{f_i} \prod_{\substack{0 \leq j \leq r \\ d_i+d_j \text{ odd}}} \epsilon(\tfrac{1}{2}, \pi_i \times \pi_j)^{\min(d_i,d_j)}. \]
Recall from \cite[\S 3.9]{chenevierrenard} that the symplectic root number $\epsilon(\tfrac{1}{2}, \pi_i \times \pi_j)$ is easily computed from the weights of $\pi_i$ and $\pi_j$.
We also require the half-spin representations $\spin_{\psi_i}^{\pm}$ distinguished in \cite[Definition 4.7.1]{taibi_ecAn}.

\begin{thm}[Theorem \ref{thm IH}] \label{thm IH tables}
  The contributions of all $\psi \in \tPsidut(\Sp_{2g}) \smallsetminus \{[2g+1]\}$ to $\IH^k(A_g^\Sat, \V_\lambda)$ for $k + |\lambda| \leq 23$ are given in the last column of Tables \ref{tab:rest}, \ref{tab:C}, and \ref{tab:DEF}, where we have written $L^i$ for $\Qell(-i)$.
\end{thm}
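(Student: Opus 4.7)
The plan is to feed each parameter $\psi = \psi_0 \oplus \cdots \oplus \psi_r$ produced by Proposition \ref{pro:params_up_to_23} into the contribution formula of \cite[Theorem 7.1.3]{taibi_ecAn}. That theorem expresses $\sigma_{\psi,\iota}^\IH$, up to a global Tate twist by $-g(g+1)/4 - |\lambda|/2$, as a tensor product $\spin_{\psi_0} \otimes \spin_{\psi_1}^{\epsilon_1} \otimes \cdots \otimes \spin_{\psi_r}^{\epsilon_r}$ of (half-)spin representations for the dual $\Spin_{n_i d_i}$; the choice of half-spin for the even-dimensional factors $i \geq 1$ is dictated by the signs $u_i(\psi)$ recalled in the excerpt. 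By purity, the cohomological degrees where the contribution is nonzero correspond to the weights of Frobenius eigenvalues occurring in the spin, and are bounded below by $k(\psi) = \sum_i k(\psi_i) - |\lambda|$ from \eqref{eq:refined_ineq_M}; only the degrees with $k+|\lambda|\leq 23$ are recorded in the tables.

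The main work is then a case-by-case identification of each elementary block $\spin_{\psi_i}^{\epsilon_i}$. For the trivial block $[2d+1]$ one has $\spin(\cpsc([2d+1])) = T_\ell\langle d\rangle$ by the explicit torus computation already used in the proof of the preceding lemma. The remaining nontrivial building blocks have $n_id_i \in \{3,4,5,8\}$, so for $n_id_i \leq 5$ we can exploit the exceptional isomorphisms $\Spin_3 \simeq \SL_2$, $\Spin_4 \simeq \SL_2 \times \SL_2$, $\Spin_5 \simeq \Sp_4$ highlighted in Remark \ref{rem:77}: this identifies the (half-)spin of $\Delta_w[2]$, $(\Sym^2\Delta_w)[1]$, $(\Delta_{w_1}\otimes\Delta_{w_2})[1]$ and $\Lambda^\ast(\Delta_{w_1,w_2})[1]$ with Tate twists of, respectively, $S_\ell\langle w+1\rangle$, $\Sym^2 S_\ell\langle w+1\rangle$, one of $S_\ell\langle w_1+1\rangle$ or $S_\ell\langle w_2+1\rangle$, and $S_\ell\langle\tfrac{w_1+w_2-4}{2},\tfrac{w_1-w_2-2}{2}\rangle$ in the normalization of \cite[Theorem 6.3.1]{taibi_ecAn}. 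The single remaining block $\Delta_{11}[4]$ lies in $\Spin_8$ and is handled by the same kind of torus bookkeeping used in the proof of the weight bound, producing a Tate twist of $S_\ell\langle 12\rangle$ tensored with a Tate summand.

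To assemble each row of the tables, one tensors the appropriate elementary blocks, applies the overall Tate twist by $-g(g+1)/4 - |\lambda|/2$, and determines the signs $u_i(\psi)$. The sign $(-1)^{f_i} \prod_{j \,:\, d_i+d_j \,\text{odd}} \epsilon(\tfrac{1}{2},\pi_i\times\pi_j)^{\min(d_i,d_j)}$ is read off by locating the set $J_i \subset \{1,\dots,g\}$ via the infinitesimal character $\tau = \lambda + \rho$ and the weights of $\pi_i$, counting its even entries to obtain $(-1)^{f_i}$, and then evaluating the symplectic root numbers, which by \cite[\S 3.9]{chenevierrenard} are determined explicitly by the weights. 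For each exceptional pair $(g,\lambda)$ this finite computation pins down the correct half-spin factor(s), and one records the output in the appropriate row.

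The main obstacle is not any single conceptual difficulty but the sheer volume of careful bookkeeping: for every $\psi$ on the short list of Proposition \ref{pro:params_up_to_23} one must compute $\tau$, identify $J_i$, determine the signs, select the right half-spin factors, and sum the Tate twists. Lemma \ref{lem:k_psi_i_ineq} guarantees that only finitely many rows occur, and since each row is a tensor of at most three elementary blocks drawn from the short list above, the whole verification is explicit and is packaged once and for all in Tables \ref{tab:rest}, \ref{tab:C} and \ref{tab:DEF}.
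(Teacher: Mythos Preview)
Your approach is exactly the one the paper takes: feed each $\psi$ from Proposition \ref{pro:params_up_to_23} into \cite[Theorem 7.1.3]{taibi_ecAn}, compute the signs $u_i(\psi)$, identify the (half-)spin factors via low-dimensional coincidences, and tensor. However, several of your block identifications are incorrect and would yield wrong table entries.

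For $(\Sym^2\Delta_w)[1]$ you have $n_0d_0=3$, and under $\Spin_3\simeq\SL_2$ the spin representation is the \emph{standard} representation of $\SL_2$, not the symmetric square; the lift of the Satake parameter of $\Sym^2\Delta_w$ from $\SO_3$ to $\SL_2$ is the Satake parameter of $\Delta_w$ itself, so the contribution is $S_\ell\langle w+1\rangle$, not $\Sym^2 S_\ell\langle w+1\rangle$. For $\Delta_w[2]$ (so $n_id_i=4$, $\Spin_4\simeq\SL_2\times\SL_2$) you must spell out \emph{both} half-spins: one projection gives a Tate twist of $S_\ell\langle w+1\rangle$, the other gives $\Qell(\tfrac{1-w}{2})\oplus\Qell(\tfrac{-1-w}{2})$; the sign $u_1(\psi)$ decides which, and in Type~C it is usually the Tate one (only for $d=0$ and $w\equiv 1\bmod 4$ does $S_\ell\langle w+1\rangle$ appear). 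For $\Delta_{11}[4]$ ($n_id_i=8$) the relevant half-spin is not ``$S_\ell\langle 12\rangle$ tensored with a Tate summand'': decomposing $\spin_{\psi_1}^{u_1(\psi)}\circ\tilde\alpha_{\psi_1}$ on $\Sp_2\times\SL_2$ one finds $\Sym^2\Std_{\Sp_2}\otimes 1$ among the pieces, so the non-Tate part of the contribution is $\Sym^2 S_\ell\langle 12\rangle$, as in the rows for Type~E. With these three corrections, your outline matches the paper's proof.
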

\begin{proof}
We compute more precisely the contribution in each case.
\begin{enumerate}[(Type A)]
\item (Elliptic holomorphic cusp forms) The contribution of parameters of type $(\Sym^2 \Delta_w)[1]$ (here $g=1$ and $\lambda = w-1$) to $\IH^\bullet(A_g^\Sat, \V_\lambda)$ is $S_\ell\langle w+1\rangle$.
\item (Genus two holomorphic Siegel cusp forms of stable type) The contribution of parameters of type $(\Lambda^* \Delta_{w_1,w_2})[1]$ (here $g=2$ and $\lambda = (\tfrac{w_1+w_2-4}{2}, \tfrac{w_1-w_2-2}{2})$) to $\IH^\bullet(A_g^\Sat, \V_\lambda)$ is $S_\ell\langle\lambda\rangle$ (see \cite[Remark 5.2.3]{taibi_ecAn}).
\item \label{it:typeC} Suppose $\psi = \Delta_{w}[2] \oplus [2d+1]$ for $0 \leq d \leq (w-3)/2$.
  We have
  \[ u_1(\psi) = -\epsilon(\tfrac{1}{2}, \Delta_{w})^{\min(2,2d+1)} =
    \begin{cases}
      -1 & \text{ if } d>0 \text{ or } w \equiv -1 \mod 4 \\
      +1 & \text{ if } d=0 \text{ and } w \equiv 1 \mod 4.
    \end{cases}
  \]
  We find (similar computation as in \cite[\S 9.2]{taibi_ecAn} for the case $w=11$, using Corollary 6.2.3 loc.\ cit.) that the contribution of such parameters to $\IH^\bullet(A_g^\Sat, \V_\lambda)$ is
  \[ (\Qell(\tfrac{-w+1} 2) \oplus \Qell(\tfrac{-w-1} 2)) \otimes T_\ell \langle d \rangle \]
  in the first case, and $S_\ell\langle w+1 \rangle$ in the second case.
\item Consider $\psi = \Delta_{11}[2] \oplus (\Sym^2 \Delta_{11})[1]$, so $g=3$ and $\lambda = (8,4,4)$.
  Then $u_1(\psi) = -\epsilon(\tfrac{1}{2}, \Delta_{11} \times \Sym^2 \Delta_{11}) = +1$ and a computation similar to the ones in Type C and E shows that the contribution of $\psi$ to $\IH^\bullet(A_g^\Sat, \V_\lambda)$ is
  \[ S_\ell \langle 12\rangle^{\otimes 2} \simeq \Sym^2 S_\ell\langle12\rangle \oplus \Qell(-11). \]
\item \label{it:typeE}
  The contribution of parameters of type $\Delta_{11}[4] \oplus [2d+1]$ for $d \in \{0,1,2,3\}$ (here $g=d+4$ and $\lambda=(7-g,7-g,7-g,7-g,0,\dots,0)$) to $\IH^\bullet(A_g^\Sat, \V_\lambda)$ is
  \[ \left( \Sym^2 S_\ell\langle12\rangle \oplus \bigoplus_{i=9}^{13} \Qell(-i) \right) \otimes T_\ell \langle d \rangle \]
  as explained in \cite[\S 9.2]{taibi_ecAn}.
\item Consider $\psi = (\Delta_{w_1} \otimes \Delta_{w_2})[1] \oplus [2d+1]$ where $w_1>w_2$ and $0 \leq d \leq \tfrac{w_1-w_2}{2}-1$.
  Here $g=d+2$ and
  \[ \lambda = \left(\frac{w_1+w_2}{2}-d-2, \frac{w_1-w_2}{2}-d-1, 0, \dots, 0 \right). \]
  We have $u_1(\psi) = -\epsilon(\tfrac{1}{2}, \Delta_{w_1} \times \Delta_{w_2}) = -1$ and so the contribution of these parameters to $\IH^\bullet(A_g^\Sat, \V_\lambda)$ is
  \[ S_\ell \langle w_2+1 \rangle^{\oplus s\langle w_1+1 \rangle }((w_2-w_1)/2) \otimes T_\ell \langle d \rangle. \qedhere \]
\end{enumerate}
\end{proof}

\begin{proof}[Proof of Theorem \ref{thm:lowdegintcoh}]
  We first prove the theorem up to semi-simplification.
  For $g \neq 7$ this follows from Theorem \ref{thm IH tables} and selecting the rows with $\lambda=0$ in the tables.
  For $g=7$ the semi-simplification of $\IH^\bullet(A_7^\Sat, \Qell)$ was already computed in \cite[\S 9.2]{taibi_ecAn}: only two parameters contribute, namely $[15]$ and $\Delta_{11}[4] \oplus [7]$ \ref{it:typeE}.
  We deduce
  \[ \bigoplus_k \IH^k(A_7^\Sat, \Qell)^\mathrm{ss} \simeq T_\ell \langle 7 \rangle \oplus \left( \Sym^2 S_\ell \langle 12 \rangle \oplus \bigoplus_{i=9}^{13} \Qell(-i) \right) \otimes T_\ell \langle 3 \rangle \]
  and in weight $\leq 23$ the only non-Tate part on the right-hand side is in weight $22$.

  To remove the semi-simplifications we use that $\IH^k(A_g^\Sat, \Qell)$ is unramified away from $\ell$ and crystalline at $\ell$.
  In the cases of Tate type this implies that $\IH^k(A_g^\Sat, \Qell)(\tfrac{k}{2})$ is an everywhere unramified Galois representation, so it is trivial.
  For the case $(g,k)=(7,22)$ we know thanks to \cite[Theorem B (1)]{newtonthorne_adjselmer} that there is no non-trivial conductor one geometric extension between $\Sym^2 S_\ell \langle 12 \rangle$ and $\Qell(-11)$.
\end{proof}

\begin{table}[]
    \centering
    \begin{tabular}{c|c|c|c}
      $g$ & $\lambda$ & $\psi$ & contribution in weight $\leq 23$ \\
    $1$ & $(10)$ &  $(\Sym^2 \Delta_{11})[1]$ & $S_\ell \langle 12 \rangle$ \\
    $1$ & $(14)$ &  $(\Sym^2 \Delta_{15})[1]$ & $S_\ell \langle 16 \rangle$ \\
    $1$ & $(16)$ &  $(\Sym^2 \Delta_{17})[1]$ & $S_\ell \langle 18 \rangle$ \\
    $1$ & $(18)$ &  $(\Sym^2 \Delta_{19})[1]$ & $S_\ell \langle 20 \rangle$ \\
    $1$ & $(20)$ &  $(\Sym^2 \Delta_{21})[1]$ & $S_\ell \langle 22 \rangle$\\
    $1$ & $(22)$ &  $(\Sym^2 \Delta_{23})[1]$ & $S_\ell \langle 24 \rangle$\\
    $2$ & $(11,5)$ &  $\Lambda^* (\Delta_{19,7})[1]$ & $S_\ell \langle 11, 5 \rangle$ \\
    $2$ & $(11,7)$ &  $\Lambda^* (\Delta_{21,5})[1]$ & $S_\ell \langle 11, 7 \rangle$ \\
    $2$ & $(13,5)$ &  $\Lambda^* (\Delta_{21,9})[1]$ & $S_\ell \langle 13, 5 \rangle$ \\
    $2$ & $(13,7)$ &  $\Lambda^* (\Delta_{23,7})[1]$ & $S_\ell \langle 13, 7 \rangle$ \\
    $2$ & $(14,6)$ &  $\Lambda^* (\Delta_{23,9})[1]$ & $S_\ell \langle 14, 6 \rangle$ \\
    $2$ & $(15,3)$ &  $\Lambda^* (\Delta_{21,13})[1]$ & $S_\ell \langle 15, 3 \rangle$ \\
    $2$ & $(16,4)$ &  $\Lambda^* (\Delta_{23,13})[1]$ & $S_\ell \langle 16, 4 \rangle$ \\
    
    \end{tabular}\vspace{.1in}

    \label{tab:rest}
    \caption{Types A and B}
    
\end{table}

\begin{table}[]
    \centering
    \begin{tabular}{c|c|c|c}
      $g$ & $\lambda$ & $\psi$ & contribution in weight $\leq 23$ \\
  $2$ & $(4, 4)$ & $\Delta_{11}[2] \oplus [1]$ & $L^{5} \oplus L^{6}$ \\
  $3$ & $(3, 3, 0)$ & $\Delta_{11}[2] \oplus [3]$ & $L^{5} \oplus 2L^{6} \oplus L^{7}$ \\
  $4$ & $(2, 2, 0, 0)$ & $\Delta_{11}[2] \oplus [5]$ & $L^{5} \oplus 2L^{6} \oplus 2L^{7} \oplus 2L^{8} \oplus L^{9}$ \\
  $5$ & $(1, 1, 0, 0, 0)$ & $\Delta_{11}[2] \oplus [7]$ & $L^{5}\oplus2L^{6}\oplus2L^{7}\oplus3L^{8} \oplus 3L^{9} \oplus 2L^{10} \oplus 2L^{11}$ \\
  $6$ & $(0, 0, 0, 0, 0, 0)$ & $\Delta_{11}[2] \oplus [9]$ & $L^{5} \oplus 2L^{6} \oplus 2L^{7} \oplus 3L^{8} \oplus 4L^{9} \oplus 4L^{10} \oplus 4L^{11}$ \\
  $2$ & $(6, 6)$ & $\Delta_{15}[2] \oplus [1]$ & $L^{7} \oplus L^{8}$ \\
  $3$ & $(5, 5, 0)$ & $\Delta_{15}[2] \oplus [3]$ & $L^{7} \oplus 2L^{8} \oplus L^{9}$ \\
  $4$ & $(4, 4, 0, 0)$ & $\Delta_{15}[2] \oplus [5]$ & $L^{7} \oplus 2L^{8} \oplus 2L^{9} \oplus 2L^{10} \oplus L^{11}$ \\
  $5$ & $(3, 3, 0, 0, 0)$ & $\Delta_{15}[2] \oplus [7]$ & $L^{7} \oplus 2L^{8} \oplus 2L^{9} \oplus 3L^{10} \oplus 3L^{11}$ \\
  $6$ & $(2, 2, 0, 0, 0, 0)$ & $\Delta_{15}[2] \oplus [9]$ & $L^{7} \oplus 2L^{8} \oplus 2L^{9} \oplus 3L^{10} \oplus 4L^{11}$ \\
  $7$ & $(1, 1, 0, 0, 0, 0, 0)$ & $\Delta_{15}[2] \oplus [11]$ & $L^{7} \oplus 2L^{8} \oplus 2L^{9} \oplus 3L^{10} \oplus 4L^{11}$ \\
  $8$ & $(0, 0, 0, 0, 0, 0, 0, 0)$ & $\Delta_{15}[2] \oplus [13]$ & $L^{7} \oplus 2L^{8} \oplus 2L^{9} \oplus 3L^{10} \oplus 4L^{11}$ \\
  $2$ & $(7, 7)$ & $\Delta_{17}[2] \oplus [1]$ & $S_\ell\langle18\rangle$ \\
  $3$ & $(6, 6, 0)$ & $\Delta_{17}[2] \oplus [3]$ & $L^{8} \oplus 2L^{9} \oplus L^{10}$ \\
  $4$ & $(5, 5, 0, 0)$ & $\Delta_{17}[2] \oplus [5]$ & $L^{8} \oplus 2L^{9} \oplus 2L^{10} \oplus 2L^{11}$ \\
  $5$ & $(4, 4, 0, 0, 0)$ & $\Delta_{17}[2] \oplus [7]$ & $L^{8} \oplus 2L^{9} \oplus 2L^{10} \oplus 3L^{11}$ \\
  $6$ & $(3, 3, 0, 0, 0, 0)$ & $\Delta_{17}[2] \oplus [9]$ & $L^{8} \oplus 2L^{9} \oplus 2L^{10} \oplus 3L^{11}$ \\
  $7$ & $(2, 2, 0, 0, 0, 0, 0)$ & $\Delta_{17}[2] \oplus [11]$ & $L^{8} \oplus 2L^{9} \oplus 2L^{10} \oplus 3L^{11}$ \\
  $8$ & $(1, 1, 0, 0, 0, 0, 0, 0)$ & $\Delta_{17}[2] \oplus [13]$ & $L^{8} \oplus 2L^{9} \oplus 2L^{10} \oplus 3L^{11}$ \\
  $9$ & $(0, 0, 0, 0, 0, 0, 0, 0, 0)$ & $\Delta_{17}[2] \oplus [15]$ & $L^{8} \oplus 2L^{9} \oplus 2L^{10} \oplus 3L^{11}$ \\
  $2$ & $(8, 8)$ & $\Delta_{19}[2] \oplus [1]$ & $L^{9} \oplus L^{10}$ \\
  $3$ & $(7, 7, 0)$ & $\Delta_{19}[2] \oplus [3]$ & $L^{9} \oplus 2L^{10} \oplus L^{11}$ \\
  $4$ & $(6, 6, 0, 0)$ & $\Delta_{19}[2] \oplus [5]$ & $L^{9} \oplus 2L^{10} \oplus 2L^{11}$ \\
  $5$ & $(5, 5, 0, 0, 0)$ & $\Delta_{19}[2] \oplus [7]$ & $L^{9} \oplus 2L^{10} \oplus 2L^{11}$ \\
  $6$ & $(4, 4, 0, 0, 0, 0)$ & $\Delta_{19}[2] \oplus [9]$ & $L^{9} \oplus 2L^{10} \oplus 2L^{11}$ \\
  $7$ & $(3, 3, 0, 0, 0, 0, 0)$ & $\Delta_{19}[2] \oplus [11]$ & $L^{9} \oplus 2L^{10} \oplus 2L^{11}$ \\
  $8$ & $(2, 2, 0, 0, 0, 0, 0, 0)$ & $\Delta_{19}[2] \oplus [13]$ & $L^{9} \oplus 2L^{10} \oplus 2L^{11}$ \\
  $9$ & $(1, 1, 0, 0, 0, 0, 0, 0, 0)$ & $\Delta_{19}[2] \oplus [15]$ & $L^{9} \oplus 2L^{10} \oplus 2L^{11}$ \\
  $10$ & $(0, 0, 0, 0, 0, 0, 0, 0, 0, 0)$ & $\Delta_{19}[2] \oplus [17]$ & $L^{9} \oplus 2L^{10} \oplus 2L^{11}$ \\
  $2$ & $(9, 9)$ & $\Delta_{21}[2] \oplus [1]$ & $S_\ell\langle22\rangle$ \\
  $3$ & $(8, 8, 0)$ & $\Delta_{21}[2] \oplus [3]$ & $L^{10} \oplus 2L^{11}$ \\
  $4$ & $(7, 7, 0, 0)$ & $\Delta_{21}[2] \oplus [5]$ & $L^{10} \oplus 2L^{11}$ \\
  $5$ & $(6, 6, 0, 0, 0)$ & $\Delta_{21}[2] \oplus [7]$ & $L^{10} \oplus 2L^{11}$ \\
  $6$ & $(5, 5, 0, 0, 0, 0)$ & $\Delta_{21}[2] \oplus [9]$ & $L^{10} \oplus 2L^{11}$ \\
  $7$ & $(4, 4, 0, 0, 0, 0, 0)$ & $\Delta_{21}[2] \oplus [11]$ & $L^{10} \oplus 2L^{11}$ \\
  $8$ & $(3, 3, 0, 0, 0, 0, 0, 0)$ & $\Delta_{21}[2] \oplus [13]$ & $L^{10} \oplus 2L^{11}$ \\
  $9$ & $(2, 2, 0, 0, 0, 0, 0, 0, 0)$ & $\Delta_{21}[2] \oplus [15]$ & $L^{10} \oplus 2L^{11}$ \\
  $10$ & $(1, 1, 0, 0, 0, 0, 0, 0, 0, 0)$ & $\Delta_{21}[2] \oplus [17]$ & $L^{10} \oplus 2L^{11}$ \\
  $11$ & $(0, 0, 0, 0, 0, 0, 0, 0, 0, 0, 0)$ & $\Delta_{21}[2] \oplus [19]$ & $L^{10} \oplus 2L^{11}$ \\
  $2$ & $(10, 10)$ & $\Delta_{23}[2] \oplus [1]$ & $L^{11}$ \\
  $3$ & $(9, 9, 0)$ & $\Delta_{23}[2] \oplus [3]$ & $L^{11}$ \\
  $4$ & $(8, 8, 0, 0)$ & $\Delta_{23}[2] \oplus [5]$ & $L^{11}$ \\
  $5$ & $(7, 7, 0, 0, 0)$ & $\Delta_{23}[2] \oplus [7]$ & $L^{11}$ \\
  $6$ & $(6, 6, 0, 0, 0, 0)$ & $\Delta_{23}[2] \oplus [9]$ & $L^{11}$ \\
  $7$ & $(5, 5, 0, 0, 0, 0, 0)$ & $\Delta_{23}[2] \oplus [11]$ & $L^{11}$ \\
  $8$ & $(4, 4, 0, 0, 0, 0, 0, 0)$ & $\Delta_{23}[2] \oplus [13]$ & $L^{11}$ \\
  $9$ & $(3, 3, 0, 0, 0, 0, 0, 0, 0)$ & $\Delta_{23}[2] \oplus [15]$ & $L^{11}$ \\
  $10$ & $(2, 2, 0, 0, 0, 0, 0, 0, 0, 0)$ & $\Delta_{23}[2] \oplus [17]$ & $L^{11}$ \\
  $11$ & $(1, 1, 0, 0, 0, 0, 0, 0, 0, 0, 0)$ & $\Delta_{23}[2] \oplus [19]$ & $L^{11}$ \\
  $12$ & $(0, 0, 0, 0, 0, 0, 0, 0, 0, 0, 0, 0)$ & $\Delta_{23}[2] \oplus [21]$ & $L^{11}$
    \end{tabular}\vspace{.1in}

    \caption{Type C}
    \label{tab:C}
\end{table}

\begin{table}[]
    \centering
    \begin{tabular}{c|c|c|c}
      $g$ & $\lambda$ & $\psi$ & contribution in weight $\leq 23$ \\
  $3$ & $(8,4,4)$ &  $\Delta_{11}[2] \oplus (\Sym^2 \Delta_{11})[1]$ & $\Sym^2 S_\ell \langle 12 \rangle \oplus L^{11}$ \\
 $4$ & $(3, 3, 3, 3)$ & $\Delta_{11}[4] \oplus [1]$ & $\Sym^2 S_\ell \langle 12 \rangle \oplus L^{9} \oplus L^{10} \oplus L^{11}$ \\
  $5$ & $(2, 2, 2, 2, 0)$ & $\Delta_{11}[4] \oplus [3]$ & $\Sym^2 S_\ell \langle 12 \rangle \oplus L^{9} \oplus 2L^{10} \oplus 2L^{11}$ \\
  $6$ & $(1, 1, 1, 1, 0, 0)$ & $\Delta_{11}[4] \oplus [5]$ & $\Sym^2 S_\ell \langle 12 \rangle \oplus L^{9} \oplus 2L^{10} \oplus 3L^{11}$ \\
  $7$ & $(0, 0, 0, 0, 0, 0, 0)$ & $\Delta_{11}[4] \oplus [7]$ & $\Sym^2 S_\ell \langle 12 \rangle \oplus L^{9} \oplus 2L^{10} \oplus 3L^{11}$ \\

  $2$ & $(11, 1)$ & $\Delta_{15} \otimes \Delta_{11} \oplus [1]$ & $S_\ell \langle 12 \rangle \otimes L^{2}$ \\
  $3$ & $(10, 0, 0)$ & $\Delta_{15} \otimes \Delta_{11} \oplus [3]$ & $S_\ell \langle 12 \rangle \otimes ( L^{2} \oplus L^{3} )$ \\
  $2$ & $(12, 2)$ & $\Delta_{17} \otimes \Delta_{11} \oplus [1]$ & $S_\ell \langle 12 \rangle \otimes L^{3}$ \\
  $3$ & $(11, 1, 0)$ & $\Delta_{17} \otimes \Delta_{11} \oplus [3]$ & $S_\ell \langle 12 \rangle \otimes ( L^{3} \oplus L^{4} )$ \\
  $4$ & $(10, 0, 0, 0)$ & $\Delta_{17} \otimes \Delta_{11} \oplus [5]$ & $S_\ell \langle 12 \rangle \otimes ( L^{3} \oplus L^{4} \oplus L^{5} \oplus L^{6} )$ \\
  $2$ & $(14, 0)$ & $\Delta_{17} \otimes \Delta_{15} \oplus [1]$ & $S_\ell \langle 16 \rangle \otimes L^{1}$ \\
  $2$ & $(13, 3)$ & $\Delta_{19} \otimes \Delta_{11} \oplus [1]$ & $S_\ell \langle 12 \rangle \otimes L^{4}$ \\
  $3$ & $(12, 2, 0)$ & $\Delta_{19} \otimes \Delta_{11} \oplus [3]$ & $S_\ell \langle 12 \rangle \otimes ( L^{4} \oplus L^{5} )$ \\
  $4$ & $(11, 1, 0, 0)$ & $\Delta_{19} \otimes \Delta_{11} \oplus [5]$ & $S_\ell \langle 12 \rangle \otimes ( L^{4} \oplus L^{5} \oplus L^{6} )$ \\
  $5$ & $(10, 0, 0, 0, 0)$ & $\Delta_{19} \otimes \Delta_{11} \oplus [7]$ & $S_\ell \langle 12 \rangle \otimes ( L^{4} \oplus L^{5} \oplus L^{6} )$ \\
  $2$ & $(15, 1)$ & $\Delta_{19} \otimes \Delta_{15} \oplus [1]$ & $S_\ell \langle 16 \rangle \otimes L^{2}$ \\
  $3$ & $(14, 0, 0)$ & $\Delta_{19} \otimes \Delta_{15} \oplus [3]$ & $S_\ell \langle 16 \rangle \otimes ( L^{2} \oplus L^{3} )$ \\
  $2$ & $(16, 0)$ & $\Delta_{19} \otimes \Delta_{17} \oplus [1]$ & $S_\ell \langle 18 \rangle \otimes L^{1}$ \\
  $2$ & $(14, 4)$ & $\Delta_{21} \otimes \Delta_{11} \oplus [1]$ & $S_\ell \langle 12 \rangle \otimes L^{5}$ \\
  $3$ & $(13, 3, 0)$ & $\Delta_{21} \otimes \Delta_{11} \oplus [3]$ & $S_\ell \langle 12 \rangle \otimes ( L^{5} \oplus L^{6} )$ \\
  $4$ & $(12, 2, 0, 0)$ & $\Delta_{21} \otimes \Delta_{11} \oplus [5]$ & $S_\ell \langle 12 \rangle \otimes ( L^{5} \oplus L^{6} )$ \\
  $5$ & $(11, 1, 0, 0, 0)$ & $\Delta_{21} \otimes \Delta_{11} \oplus [7]$ & $S_\ell \langle 12 \rangle \otimes ( L^{5} \oplus L^{6} )$ \\
  $6$ & $(10, 0, 0, 0, 0, 0)$ & $\Delta_{21} \otimes \Delta_{11} \oplus [9]$ & $S_\ell \langle 12 \rangle \otimes ( L^{5} \oplus L^{6} )$ \\
  $2$ & $(16, 2)$ & $\Delta_{21} \otimes \Delta_{15} \oplus [1]$ & $S_\ell \langle 16 \rangle \otimes L^{3}$ \\
  $3$ & $(15, 1, 0)$ & $\Delta_{21} \otimes \Delta_{15} \oplus [3]$ & $S_\ell \langle 16 \rangle \otimes ( L^{3} \oplus L^{4} )$ \\
  $4$ & $(14, 0, 0, 0)$ & $\Delta_{21} \otimes \Delta_{15} \oplus [5]$ & $S_\ell \langle 16 \rangle \otimes ( L^{3} \oplus L^{4} )$ \\
  $2$ & $(17, 1)$ & $\Delta_{21} \otimes \Delta_{17} \oplus [1]$ & $S_\ell \langle 18 \rangle \otimes L^{2}$ \\
  $3$ & $(16, 0, 0)$ & $\Delta_{21} \otimes \Delta_{17} \oplus [3]$ & $S_\ell \langle 18 \rangle \otimes ( L^{2} \oplus L^{3} )$ \\
  $2$ & $(18, 0)$ & $\Delta_{21} \otimes \Delta_{19} \oplus [1]$ & $S_\ell \langle 20 \rangle \otimes L^{1}$ \\
  $2$ & $(15, 5)$ & $\Delta_{23} \otimes \Delta_{11} \oplus [1]$ & $S_\ell \langle 12 \rangle^{\oplus 2} \otimes L^{6}$ \\
  $3$ & $(14, 4, 0)$ & $\Delta_{23} \otimes \Delta_{11} \oplus [3]$ & $S_\ell \langle 12 \rangle^{\oplus 2} \otimes L^{6}$ \\
  $4$ & $(13, 3, 0, 0)$ & $\Delta_{23} \otimes \Delta_{11} \oplus [5]$ & $S_\ell \langle 12 \rangle^{\oplus 2} \otimes L^{6}$ \\
  $5$ & $(12, 2, 0, 0, 0)$ & $\Delta_{23} \otimes \Delta_{11} \oplus [7]$ & $S_\ell \langle 12 \rangle^{\oplus 2} \otimes L^{6}$ \\
  $6$ & $(11, 1, 0, 0, 0, 0)$ & $\Delta_{23} \otimes \Delta_{11} \oplus [9]$ & $S_\ell \langle 12 \rangle^{\oplus 2} \otimes L^{6}$ \\
  $7$ & $(10, 0, 0, 0, 0, 0, 0)$ & $\Delta_{23} \otimes \Delta_{11} \oplus [11]$ & $S_\ell \langle 12 \rangle^{\oplus 2} \otimes L^{6}$ \\
  $2$ & $(17, 3)$ & $\Delta_{23} \otimes \Delta_{15} \oplus [1]$ & $S_\ell \langle 16 \rangle^{\oplus 2} \otimes L^{4}$ \\
  $3$ & $(16, 2, 0)$ & $\Delta_{23} \otimes \Delta_{15} \oplus [3]$ & $S_\ell \langle 16 \rangle^{\oplus 2} \otimes L^{4}$ \\
  $4$ & $(15, 1, 0, 0)$ & $\Delta_{23} \otimes \Delta_{15} \oplus [5]$ & $S_\ell \langle 16 \rangle^{\oplus 2} \otimes L^{4}$ \\
  $5$ & $(14, 0, 0, 0, 0)$ & $\Delta_{23} \otimes \Delta_{15} \oplus [7]$ & $S_\ell \langle 16 \rangle^{\oplus 2} \otimes L^{4}$ \\
  $2$ & $(18, 2)$ & $\Delta_{23} \otimes \Delta_{17} \oplus [1]$ & $S_\ell \langle 18 \rangle^{\oplus 2} \otimes L^{3}$ \\
  $3$ & $(17, 1, 0)$ & $\Delta_{23} \otimes \Delta_{17} \oplus [3]$ & $S_\ell \langle 18 \rangle^{\oplus 2} \otimes L^{3}$ \\
  $4$ & $(16, 0, 0, 0)$ & $\Delta_{23} \otimes \Delta_{17} \oplus [5]$ & $S_\ell \langle 18 \rangle^{\oplus 2} \otimes L^{3}$ \\
  $2$ & $(19, 1)$ & $\Delta_{23} \otimes \Delta_{19} \oplus [1]$ & $S_\ell \langle 20 \rangle^{\oplus 2} \otimes L^{2}$ \\
  $3$ & $(18, 0, 0)$ & $\Delta_{23} \otimes \Delta_{19} \oplus [3]$ & $S_\ell \langle 20 \rangle^{\oplus 2} \otimes L^{2}$ \\
  $2$ & $(20, 0)$ & $\Delta_{23} \otimes \Delta_{21} \oplus [1]$ & $S_\ell \langle 22 \rangle^{\oplus 2} \otimes L^{1}$
    \end{tabular}\vspace{.1in}

    \caption{Types D, E, and F}
    \label{tab:DEF}
\end{table}

\begin{rem} \label{rem:higher_wt}
  One can go much further than $M=23$ in Proposition \ref{pro:params_up_to_23} and Theorem \ref{thm IH tables}.
  Below we give details (but no tables) for $M=24$, sketch the classification for $M=33$, point out a new phenomenon for $M=34$ and explain in what sense these results can be generalized to $M=77$ at least.

  For $M=24$ we find using the same method the following additional $27$ parameters:
  \begin{itemize}
  \item $\Delta_{25}[2] \oplus [2d+1]$ for $0 \leq d \leq 11$, their contribution to intersection cohomology was already computed \ref{it:typeC}.
  \item $\Delta_{15}[2] \oplus \Delta_{11}[2] \oplus [2d+1]$ for $0 \leq d \leq 4$
    Writing $\psi_1 = \Delta_{15}[2]$ and $\psi_2 = \Delta_{11}[2]$ we compute $u_1(\psi) = u_2(\psi) = -1$ and conclude as in \ref{it:typeC} that the contribution to intersection cohomology is
    \[ \Qell(-12) \otimes (\Qell \oplus \Qell(-1))^{\otimes 2} \otimes T_\ell \langle d \rangle. \]
  \item $\Delta_{11}[6] \oplus [2d+1] = \psi_1 \oplus \psi_0$ for $0 \leq d \leq 2$.
    We compute $u_1(\psi) = -\epsilon(\tfrac{1}{2}, \Delta_{11})^{\min(6,2d+1)} = -1$ and
    \[ \spin_{\psi_1}^- \circ \tilde{\alpha}_{\psi_1}|_{\Sp_2 \times \SL_2} \simeq (1 \otimes \Sym^9 \Std_{\SL_2}) \oplus (1 \otimes \Sym^3 \Std_{\SL_2}) \oplus (\Sym^2 \Std_{\Sp_2} \otimes \Sym^5 \Std_{\SL_2}) \]
    (which should be understood as $[10] \oplus [4] \oplus (\Sym^2 \Delta_{11})[6]$) and we deduce that the contribution to intersection cohomology is the sum of
    \[ \left(\bigoplus_{i=12}^{21} \Qell(-i) \oplus \bigoplus_{i=15}^{18} \Qell(-i) \right) \otimes T_\ell \langle d \rangle \ \ \ \text{and} \]
    \[ (\Sym^2 S_\ell \langle 12 \rangle)(-3) \otimes \left( \bigoplus_{i=0}^5 \Qell(-i) \right) \otimes T_\ell \langle d \rangle. \]
  \item $\Delta_{w_1,w_2}[2] \oplus [2d+1] = \psi_1 \oplus \psi_0$ for $(w_1,w_2) \in \{(19,7), (21,5)\}$ for $0 \leq d \leq (w_2-3)/2$.
    We have
    \[ u_1(\psi) = \epsilon(\tfrac{1}{2}, \Delta_{w_1,w_2})^{\min(2,2d+1)} = ((-1)^{1+(w_1+w_2)/2})^{\min(2,2d+1)} = +1 \]
    and we compute
    \[ \spin_{\psi_1}^- \circ \tilde{\alpha}_{\psi_1}|_{\Sp_4 \times \SL_2} \simeq (V_{1,1} \otimes 1) \oplus (1 \otimes \Sym^2 \Std_{\SL_2})\]
    (which should be understood as $(\Lambda^* \Delta_{w_1,w_2})[1] \oplus [3]$) and deduce a contribution to intersection cohomology
    \[ \left( \Qell(-\tfrac{w_1+w_2}{2}-1) \oplus \Qell(-\tfrac{w_1+w_2}{2}+1) \oplus \Lambda^2 S_\ell \langle \tfrac{w_1+w_2-4}{2}, \tfrac{w_1-w_2-2}{2} \rangle \right) \otimes T_\ell \langle d \rangle \]
    (using $\Lambda^2 V_{1,0} \simeq V_{1,1} \oplus V_{0,0}$).
  \item $\psi = \pi[1]$ (for $g=3$ and $\lambda=(9,6,3)$), where $\pi = \Delta_{24,16,8}^o$ is the unique level one self-dual cuspidal automorphic representation for $\PGL_{7,\Q}$ with these weights.
    In this case the contribution is simply (in the notation of \cite[Theorem 5.2.2]{taibi_ecAn})
    \[ \sigma_{\psi,\iota}^\spin(-9) \simeq (\spin_{\psi} \circ \rho_{\psi,\iota}^{\GSpin})(-9) \]
    but thanks to results of Chenevier-Gan \cite{cg} we can say more.
    There is a unique level one cuspidal automorphic representation $\Pi$ for $\GSp_{6,\Q}$ of Siegel type (i.e.\ corresponding to a Siegel eigenform of level $\Sp_6(\Z)$) whose standard Arthur-Langlands parameter is $\psi$.
    By Theorem 3.8, Theorem 3.10 and Proposition 7.11 of \cite{cg} there exists a (unique) Arthur-Langlands parameter $\psi(\pi, \spin)$, which is either of the form $\pi'[1]$ (where $\pi'$ is a level one self-dual cuspidal automorphic representation for $\PGL_{8,\Q}$ with infinitesimal character $(\pm 12, \pm 8, \pm 4, 0, 0)$) or equal to $\pi \oplus [1]$, such that $\sigma_{\psi,\iota}^\spin$ is up to a Tate twist the standard Galois representation associated to $\psi'$.
    Looking at the tables we see that there is no such representation $\pi'$, so we have $\psi(\pi, \spin) = \pi[1] \oplus [1]$ and $\pi$ is of type $G_2$.
    We conclude that the contribution of $\psi$ to intersection cohomology is
    \[ (\std \circ \rho_{\Delta^o_{24,16,8}})(-12) \oplus \Qell(-12). \]
  \item $\psi = \pi[1]$ where $\pi = \Delta_{28,14,6}^o$ (for $g=3$ and $\lambda=(11,5,2)$).
    This case is similar to the previous one except that now the infinitesimal character of $\psi(\pi, \spin)$ is $(\pm 12, \pm 9, \pm 5, \pm 2)$ so we cannot have $\psi(\pi, \spin) = \psi \oplus [1]$ (i.e.\ $\Delta^o_{28,14,6}$ cannot be of type $G_2$) and we conclude $\psi' = \Delta^e_{24,18,10,4}[1]$ (the unique level one self-dual cuspidal automorphic representation for $\PGL_{8,\Q}$ with this infinitesimal character) and $\psi$ contributes
    \[ (\std \circ \rho_{\Delta^e_{24,18,10,4}}^\mathrm{O})(-12) \]
    to intersection cohomology.
  \end{itemize}

  For $M=33$ the only new shapes\footnote{For a formal definition, let the shape of $\pi_i[d_i]$ be $(n_i, d_i)$.} for $\psi_i$ are $\Delta_w[6]$ and $\Delta_w[8]$ (only for $w=11$) and $\Delta_{w_1,w_2,w_3,w_4}^e[1]$ (where $w_1 > w_2 > w_3 > w_4 > 0$ are even integers).
  For $\psi_i = \Delta_{11}[6]$ or $\Delta_{11}[8]$ one can easily express the representations $\sigma_{\psi_i,\iota}^{\spin,\pm}$ occurring in \cite[Theorem 7.1.3]{taibi_ecAn} as direct sums of Tate twists of $\Sym^d S_\ell \langle 12 \rangle$ for $0 \leq d \leq 4$.
  For $\psi_i = \pi[1] = \Delta_{w_1,w_2,w_3,w_4}^{e,(j)}[1]$ we have $k(\psi_i) \in \{30,32,34\}$ and one can show using triality as in \cite{cg} that the representations $\sigma_{\psi_i,\iota}^{\spin,\pm}$ can again be expressed as direct sums of Galois representations associated to self-dual algebraic regular cuspidal representations.
  More precisely, there are two $8$-dimensional orthogonal parameters $\psi(\pi, \spin^\pm)$ having weights
  \[ \pm (\tfrac{w_1+w_2+w_3+w_4}{4}, \tfrac{w_1+w_2-w_3-w_4}{4}, \tfrac{w_1-w_2+w_3-w_4}{4}, \tfrac{w_1-w_2-w_3+w_4}{4}) \text{ (for } \psi(\pi, \spin^+) \text{)} \]
  \[ \pm (\tfrac{w_1+w_2+w_3-w_4}{4}, \tfrac{w_1+w_2-w_3+w_4}{4}, \tfrac{w_1-w_2+w_3+w_4}{4}, \tfrac{w_1-w_2-w_3-w_4}{4}) \text{ (for } \psi(\pi, \spin^-) \text{)} \]
  and such that $\sigma_{\psi_i,\iota}^{\spin, \pm}$ is up to a Tate twist the standard Galois representation associated to $\psi(\pi, \spin^\pm)$.
  Three possibilities can occur.
  \begin{itemize}
  \item If $\psi_i$ is $\psi(\pi', \spin)$ for some $\pi' = \Delta_{w_1',w_2',w_3'}^{o,(j)}$, necessarily with
    \[ \{ \pm w_1, \pm w_2, \pm w_3, \pm w_4\} = \{ \pm \tfrac{w_1'+w_2'+w_3'}{2}, \pm \tfrac{w_1'+w_2'-w_3'}{2}, \pm \tfrac{w_1'-w_2'+w_3'}{2}, \pm \tfrac{w_1'-w_2'-w_3'}{2} \} \]
    and $\pi'$ not of type $G_2$ then
    \[ \{ \psi(\pi,\spin^+), \psi(\pi, \spin^-) \} = \{ \pi'[1] \oplus [1], \pi[1] \}, \]
    distinguished by the presence of $0$ as a weight.
  \item If $\pi = \Delta_{w_1'}^{(j')} \otimes \Delta_{w_1'',w_2''}^{(j'')}$ (tensor product functoriality, see \cite[Theorem 5.3]{cg}) then we have
    \[ \{ \psi(\pi,\spin^+), \psi(\pi, \spin^-) \} = \{ (\Sym^2 \Delta_{w_1'}^{(j')})[1] \oplus (\Lambda^* \Delta_{w_1'',w_2''}^{(j'')})[1], \pi[1] \}, \]
    distinguished by the presence of $0$ as a weight.
  \item Otherwise we have $\psi(\pi,\spin^+) = \pi'[1]$ and $\psi(\pi, \spin^-) = \pi''[1]$ where $\pi'$ and $\pi''$ are both cuspidal.
    Note that in this case it can happen that exactly one of $\pi'$ and $\pi''$ has infinitesimal character at the real place $(\pm w_1', \pm w_2', \pm w_3', 0, 0)$ where $w_1'>w_2'>w_3'>0$ are integers, in which case the associated Galois representation is obtained by $\ell$-adic interpolation from the regular case as in \cite[Corollary 6.4.5]{taibi_ecAn}.
    In this case we have local-global compatibility at all primes $p \neq \ell$ but compatibility at $\ell$ is not yet completely known.
    More precisely we know that the representation is crystalline (as a consequence of \cite[Corollary 6.5.4]{taibi_ecAn}) with Hodge-Tate weights equal to the eigenvalues of the infinitesimal character (by $\ell$-adic interpolation) but not that the eigenvalues of $\Frob_\ell$ on the associated Weil-Deligne representation are those of the Satake parameter $\pm \iota(c(\pi'_\ell))$ or $\pm \iota(c(\pi''_\ell))$, but we cannot yet resolve the $\pm$ ambiguity.
  \end{itemize}

  For $M=34$ a new shape occurs: $\Delta^o_{w_1,w_2,w_3,w_4}[1]$ for
  \[ (w_1,w_2,w_3,w_4) \in \{(30,20,10,8),\, (32,16,14,6), \, (36,16,10,6)\} \]
  and we do not know how to express the associated spin Galois representations $\sigma_{\psi_i,\iota}^\spin$ as direct sums of Galois representations associated to self-dual cuspidal representations.

  We have explicit formulas for numbers of self-dual algebraic regular cuspidal automorphic representations for $\PGL_{n,\Q}$ for $n \leq 24$ (see \cite{taibi_massLF}), so in principle the same method as the one used above allows one to compute the semi-simplification of $\IH^k(A_g^\Sat, \V_\lambda)$ at least for $k + |\lambda| \leq 77 = 12 \cdot 13 / 2 - 1$, in terms of the spin Galois representations $\sigma_{\psi_0,\iota}^\spin$ and $\sigma_{\psi_i,\iota}^{\spin,\pm}$ \cite[Theorem 7.1.3]{taibi_ecAn}.
  For example the dimensions of these intersection cohomology groups can be computed explicitly.
  At present we do not know that the Hodge-Tate weights of the Galois representations $\sigma_{\psi_i,\iota}^{\spin,u_i(\psi)}$ are given by the recipe $\spin_{\psi_i}^{u_i(\psi)}(\tau_{\psi_i})$ (defined in \cite[Definitions 3.1.7 and 4.7.1]{taibi_ecAn}): see Remark \ref{rem:comp_Arthur_Kottwitz}.
  It is however possible to compute the real Hodge structure on $\IH^\bullet(A_g^\Sat, \V_\lambda)$ in the setting of mixed Hodge modules, as explained in Section \ref{sec:L2}.
\end{rem}

\section{Toroidal compactifications of \texorpdfstring{$A_g$}{Ag} and \texorpdfstring{$X_{g,s}$}{Xgs}}\label{sec:compact}
\subsection{Compactifications of \texorpdfstring{$A_g$}{Ag}}\label{sec:Agcompact}

Toroidal compactifications of $A_g$ were constructed over $\Spec \C$ by Ash, Mumford, Rapoport, and Tai \cite{AMRT} and over $\Spec \Z$ by Faltings and Chai \cite{faltingschai}. Let $\Omega^{\rt}_g$ be the cone of symmetric positive semi-definite real $g\times g$ matrices whose kernel has a basis defined over $\Q$. Toroidal compactifications correspond to \emph{admissible decompositions}, which are collections $\Sigma_g$ of rational polyhedral cones lying in $\Omega_g^{\rt}$ satisfying the following properties:
\begin{enumerate}
    \item the union of the cones in $\Sigma_g$ covers $\Omega_g^{\rt}$;
    \item $\Sigma_g$ is closed under taking faces. The intersection of two cones in $\Sigma_g$ is a face of both of them;
    \item $\Sigma_g$ is invariant under the natural action of $\mathrm{GL}_g(\Z)$;
    \item there are finitely many orbits of cones in $\Sigma_g$ under the $\mathrm{GL}_g(\Z)$-action.
\end{enumerate}
An admissible decomposition $\Sigma_g$ of $\Omega_g^{\rt}$ induces an admissible decomposition $\Sigma_{g'}$ of $\Omega_{g'}^{\rt}$ for any $0\leq g'\leq g$. For ease of notation, we denote each of these admissible decompositions by $\Sigma$, leaving the dependence on $g$ implicit.

For every admissible decomposition $\Sigma$ as above, there is a proper stack $\overline A_g^\Sigma$ compactifying $A_g$. For ease of notation we denote it $\overline A_g$, keeping in mind that it depends on a choice of admissible decomposition as above.

If $\Sigma$ is simplicial, meaning every cone $\sigma\in \Sigma$ is simplicial, then $\overline{A}_g$ is smooth as a stack. Any admissible decomposition $\Sigma$ can be refined to a simplicial one. For any $\Sigma$, the boundary of $\overline{A}_g$ is a normal crossings divisor, possibly with self-intersections.

The \emph{dimension} of a cone $\sigma\in \Sigma$ is the dimension of its span in the vector space of quadratic forms on $\R^g$. We say a cone $\sigma\in \Sigma$ is of \emph{rank} $r$ if its generic element corresponds to a symmetric bilinear form of rank $r$.

The toroidal compactifications and the Satake compactification are related as follows. For all admissible decompositions $\Sigma$ and associated compactifications $\Abar_g$, there is a unique map
\[
\varphi:\overline{A}_g\rightarrow A_g^{\Sat}
\]
extending the identity on the interior $A_g$. The space $\overline A_g$ admits a stratification whose set of strata is indexed by the set of cones $\sigma$ in the decomposition $\Sigma$. We write $\beta(\sigma)$ for the locally closed stratum in $\overline A_g$ associated with the cone $\sigma$. Suppose that $\sigma$ has rank $r$ and dimension $d$. Then the restriction of $\varphi$ to $\beta(\sigma)$ has image in the locally closed substack $A_{g-r}$ of $A_g^\Sat$. 

We give a more explicit description of the stratum $\beta(\sigma)$. Let $X_{g-r,r}$ be the $r$-fold fiber product of the universal abelian $(g-r)$-fold over $A_{g-r}$. Over $X_{g-r,r}$, there is a torus bundle $T(\sigma)$ associated to the cone $\sigma$ whose fibers are the quotients $T_r/T_{\sigma}$, where  $T_r = \Sym^2(\Z^{r})\otimes \G_m$ and $T_{\sigma} = (\Sym^2 (\Z^{r})\cap \langle \sigma \rangle )\otimes \G_m$. Consider the finite group $G(\sigma)$, which is the stabilizer of the cone $\sigma$ in $\GL_r(\Z)$. The group $G(\sigma)$ acts on 
\[
T(\sigma)\rightarrow X_{g-r,r}
\]
and $\beta(\sigma) = T(\sigma)/G(\sigma)$.

\subsection{Compactifications of \texorpdfstring{$X_{g,s}$}{Xg,s}}\label{subsec:stratification of Xgs}
Let $\pi:X_g\rightarrow A_g$ be the universal abelian variety, and let $\pi^s:X_{g,s}\rightarrow A_g$ be its $s$-fold fiber product. Faltings and Chai \cite{faltingschai} constructed toroidal compactifications of $X_{g,s}$, again associated to certain combinatorial data. Set
\[
{\Omega}^{\rt}_{g,s} = \{(Q,L_1,\dots,L_s)\in \Omega_g^{\rt}\times (\R^g)^s: L_i \text{ vanishes on the kernel of } Q \text{ for all } i\}.
\]
Here, we view $L_i$ as a row vector, or equivalently a linear form. An \emph{admissible decomposition of ${\Omega}^{\rt}_{g,s}$ relative to an admissible decomposition $\Sigma$ of $\Omega_{g}^{\rt}$} is a collection $\tilde{\Sigma}$ of rational polyhedral cones lying in ${\Omega}^{\rt}_{g,s}$ satisfying the following properties:
\begin{enumerate}
    \item the union of cones in $\tilde{\Sigma}$ covers ${\Omega}^{\rt}_{g,s}$;
    \item $\tilde{\Sigma}$ is closed under taking faces. The intersection of two cones in $\tilde{\Sigma}$ is a face of both of them;
    \item $\tilde{\Sigma}$ is invariant under the natural action of $\GL_g(\Z^g)\ltimes (\Z^g)^s$;
    \item there are finitely many orbits of cones in $\tilde{\Sigma}$ under the $\GL_g(\Z^g)\ltimes (\Z^g)^s$-action;
    \item any cone $\tau\in\tilde{\Sigma}$ maps into a cone $\sigma$ in $\Sigma$ under the the natural projection map.
\end{enumerate}
\noindent

Associated to each such $\tilde{\Sigma}$ is a toroidal compactification of $X_{g,s}$. We denote it $\Xbar_{g,s}$, continuing to suppress the dependence on the decomposition $\tilde\Sigma$ from the notation. The toroidal compactification comes with a map $\overline{\pi^s}:\Xbar_{g,s}\rightarrow \Abar_g$ extending $\pi^s$. Aspects of the geometry of $\Xbar_{g,s}$ are encoded in the combinatorics of $\tilde{\Sigma}$. In particular, if $\tilde{\Sigma}$ is simplicial, then $\Xbar_{g,s}$ is nonsingular. The \emph{dimension} of a cone $\tau\in \tilde{\Sigma}$ is the dimension of its span. We say a cone $\tau\in \tilde{\Sigma}$ is of \emph{rank} $r$ if for a generic element $(Q,L_1,\dots,L_s)$, $Q$ is of rank $r$.

The compactification $\Xbar_{g,s}$ admits a stratification, indexed by the set of cones $\tau$ of $\tilde \Sigma$. Let $\gamma(\tau)$ be the locally closed stratum of $\Xbar_{g,s}$ associated with $\tau$. The stratum $\gamma(\tau)$ has an explicit description as a finite group quotient of a torus bundle. Let $X_{g-r,r}\times_{A_{g-r}} X_{g-r,s}\rightarrow A_{g-r}$ be the $(r+s)$-fold fiber product of the universal abelian $(g-r)$-fold. There is a torus bundle $T(\tau)\rightarrow X_{g-r,r}\times_{A_{g-r}} X_{g-r,s}$ whose fibers are given by $(\Sym^2(\Z^r)\times (\Z^r)^s)\otimes \G_m/(\Sym^2(\Z^r)\times (\Z^r)^s\cap \langle \tau\rangle)\otimes \G_m$. The finite group $G(\tau)$, which is the stabilizer of the cone $\tau$ in $\GL_{r}(\Z)\ltimes (\Z^r)^s$ acts on $T(\tau)\rightarrow X_{g-r,r}\times_{A_{g-r}} X_{g-r,s}$, and $\gamma(\tau)=T(\tau)/G(\tau)$.  Note that in the case that $s = 0$, we recover exactly the stratification of the compactifications of $A_g$ as in Section \ref{sec:Agcompact}. 

The stratifications of $\Xbar_{g,s}$ and $\overline A_g$ are compatible in the following sense. If $\tau$ maps into $\sigma$ under the natural projection map, then $\gamma(\tau)$ maps to $\beta(\sigma)$ under $\overline{\pi^s}$.

\section{Cohomology of the strata}\label{sec:stratacoh}
\subsection{Fibered powers of the universal abelian variety}

Recall $\pi^s: X_{g,s} \rightarrow A_g$ is the $s$-fold fiber product of the universal abelian $g$-fold $\pi: X_g\rightarrow A_g$.
\begin{lem}\label{lem:decomposition}
    The local system $R^q\pi^s_*\Q_{\ell}(-k)$ splits as a direct sum of $\V_{\lambda}(-m)$ such that $m \geq k$, $\lambda_1 \leq s$, and $|\lambda|+2m = q+2k$. 
\end{lem}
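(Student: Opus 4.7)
The plan is to reduce the statement to a representation-theoretic decomposition of $\Lambda^q(V \otimes \Q_\ell^{\,s})$ as an $\Sp_{2g}$-representation, where $V = R^1\pi_*\Q_\ell$ is the lisse $\ell$-adic sheaf of weight $1$ corresponding to the standard representation of $\Sp_{2g}$ (i.e.\ $\V_{(1,0,\dots,0)}$). Since $\pi$ is smooth and proper and the cohomology of an abelian variety is torsion-free, Künneth together with proper base change identifies
\[ R^q\pi^s_*\Q_\ell \;\cong\; \Lambda^q(R^1\pi^s_*\Q_\ell) \;\cong\; \Lambda^q(V \otimes_{\Q_\ell}\Q_\ell^{\,s}). \]
After this reduction the outer Tate twist $(-k)$ commutes through the wedge power, so I will work with $k=0$ and restore it at the end by shifting $m$ by $k$.

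Next I apply the Cauchy identity (equivalently, Schur--Weyl duality for the pair $(\GL_{2g},\GL_s)$),
\[ \Lambda^q(V \otimes \Q_\ell^{\,s}) \;=\; \bigoplus_{\mu \vdash q} \mathbb{S}_\mu V \otimes \mathbb{S}_{\mu^T}\Q_\ell^{\,s}, \]
where $\mathbb{S}_\mu$ is the Schur functor and $\mu^T$ is the conjugate partition. The factor $\mathbb{S}_{\mu^T}\Q_\ell^{\,s}$ is nonzero only when $\mu^T$ has at most $s$ parts, i.e.\ when $\mu_1 \leq s$; this is the origin of the first constraint. I then decompose each $\mathbb{S}_\mu V$ into $\Sp_{2g}$-irreducibles using Littlewood's branching rule from $\GL_{2g}$ to $\Sp_{2g}$:
\[ \mathbb{S}_\mu V \;\cong\; \bigoplus_\nu \Bigl(\sum_\delta c^\mu_{\nu,2\delta}\Bigr) \V_\nu, \]
where $\delta$ ranges over partitions whose conjugate has only even parts and $c^\mu_{\nu,2\delta}$ is the Littlewood--Richardson coefficient. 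The Littlewood--Richardson rule forces $\nu_1 \leq \mu_1 \leq s$, and gives $|\nu| = |\mu| - 2|\delta|$.

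The Tate twists are pinned down by purity: $\mathbb{S}_\mu V$ is pure of weight $|\mu|=q$, while $\V_\nu$ has weight $|\nu|$, so each $\Sp_{2g}$-equivariant summand of $\mathbb{S}_\mu V$ abstractly isomorphic to $\V_\nu$ must actually be $\V_\nu(-m')$ with $m'=(|\mu|-|\nu|)/2 = |\delta| \geq 0$. Intrinsically, the twist arises because the $\Sp_{2g}$-invariant pairing on $V$ is the Weil pairing $V\otimes V \to \Q_\ell(-1)$, so the contractions used to extract $\V_\nu$ from $\mathbb{S}_\mu V$ pick up compensating Tate twists. Combining the two decomposition steps and restoring the outer twist by setting $m = m'+k$ yields the three desired conditions $\lambda_1 \leq s$, $m \geq k$, and $|\lambda|+2m = q+2k$.

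The only point requiring any care is the bookkeeping of Tate twists across the $\GL$-to-$\Sp$ branching, but this is rigidified by purity once the abstract $\Sp_{2g}$-decomposition is in place; no genuinely new ingredient is needed beyond citing Littlewood's rule and weight considerations.
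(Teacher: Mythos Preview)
Your argument is correct and reaches the result by a genuinely different route from the paper. The paper writes $R^q\pi^s_*\Q_\ell$ via K\"unneth as $\bigoplus_{i_1+\dots+i_s=q} \bigwedge^{i_1}\V_1 \otimes \dots \otimes \bigwedge^{i_s}\V_1$, decomposes each wedge power explicitly as $\bigoplus_j \V_{1^{i-2j}}(-j)$ (with an extra twist when $i>g$), and then bounds $\lambda_1$ in the resulting tensor product by the elementary observation that any constituent highest weight differs from the sum of the factor highest weights by a nonnegative integer combination of the simple roots of $\Sp_{2g}$. You instead realize $R^q\pi^s_*\Q_\ell$ as $\Lambda^q(V\otimes\Q_\ell^{\,s})$, apply the dual Cauchy identity, and branch each $\mathbb{S}_\mu V$ from $\GL_{2g}$ to $\Sp_{2g}$ via Littlewood's rule. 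Your approach is more structural and has the pleasant side effect of exhibiting the $\GL_s$-module structure on the multiplicity spaces; the paper's is more elementary and needs nothing beyond root-system combinatorics.

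Two small points to tidy up. First, your statement of Littlewood's rule mixes the symplectic and orthogonal conventions: for $\Sp_{2g}$ one sums $c^\mu_{\nu\eta}$ over partitions $\eta$ with all \emph{column} lengths even (equivalently $\eta'$ has all parts even), not ``$c^\mu_{\nu,2\delta}$ with $\delta'$ even''. Second, Littlewood's formula as you quote it is only literally valid in the stable range $\ell(\mu)\le g$, which need not hold here since $\dim V = 2g$. This does not damage the conclusion $\nu_1\le\mu_1$, because every $\Sp_{2g}$-weight of the polynomial $\GL_{2g}$-representation $\mathbb{S}_\mu V$ already has first coordinate at most $\mu_1$; but it is cleaner to say this directly (as the paper effectively does) rather than lean on the branching formula outside its domain.
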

\begin{proof}It suffices to prove this when $k=0$, since the functor $R^q\pi^s_*$ commutes with Tate twist. By the K\"unneth formula, we have
    \[
    R^q\pi^s_*\Q_{\ell} \cong \bigoplus_{i_1+\dots+i_s = q} R^{i_1}\pi_*\Q_{\ell} \otimes \dots \otimes R^{i_s} \pi_* \Q_{\ell},
    \]
    where $\pi^1=\pi$.
    We have $R^1\pi_*\Q_{\ell} \cong \V_1$, and also $R^{i}\pi_* \Q_{\ell} \cong \bigwedge^{i} \V_1.$
    For $0 \leq i \leq g$ we have
    \[ \bigwedge^i \V_1 \simeq \bigoplus_{0 \leq j \leq \lfloor i/2 \rfloor} \V_{1^{i-2j}}(-j) \]
    and
    \[ \bigwedge^{2g-i} \V_1 \simeq \bigwedge^i \V_1 (i-g) \simeq \bigoplus_{0 \leq j \leq \lfloor i/2 \rfloor} \V_{1^{i-2j}}(i-g-j). \]
    
    Considering weights for the diagonal torus of $\mathrm{Sp}_{2g}$ we see that the representation
    \[ V_{1^{i_1-2j_1}} \otimes \dots \otimes V_{1^{i_s-2j_s}} \]
    of $\mathrm{Sp}_{2g}$ decomposes as a direct sum of $V_\lambda$'s satisfying $\sum_{k=1}^s 1^{i_k-2j_k} - \lambda = \sum_{\alpha \in \Delta} n_\alpha \alpha$ where $n_\alpha \geq 0$ and
    \[ \Delta = \{(1,-1,0,\dots,0), \dots, (0,\dots,0,1,-1), (0,\dots,0,2)\} \]
    is the set of simple roots.
    In particular $V_\lambda$ occurs only if $\lambda_1 \leq s$.
    The condition $|\lambda|+2m=q$ follows from consideration of weights (equivalently, central characters for representations of $\mathrm{GSp}_{2g}$).
\end{proof}
\subsection{Spectral sequence of a stratified space}
Let $Y$ be a Deligne--Mumford stack over a field. Suppose $Y$ admits a filtration by closed substacks
\[
\emptyset = Y_{-1} \subset Y_0 \subset \dots \subset Y_n = Y.
\]
Then there is a spectral sequence
\begin{equation}\label{BMSS}
E_1^{p,q} = H^{p+q}_{c}(Y_p\smallsetminus Y_{p-1},\Q_{\ell})\implies H^{p+q}_{c}(Y,\Q_{\ell})
\end{equation}
 The differentials are compatible with the weight filtration. In particular, if $Y$ is smooth and proper, only the pure weight part of each entry $E_1^{p,q}$ can survive to $E_{\infty}^{p,q}$.  
 
The construction is well known to experts, but we explain it for the reader's convenience. 
For each $p$ let $j(p)$ be the open embedding $(Y\smallsetminus Y_p) \hookrightarrow Y$, and let $i(p)$ be the locally closed embedding $(Y_p \smallsetminus Y_{p-1}) \hookrightarrow Y$. For each $p$ there is a short exact sequence of constructible sheaves on $Y$,
\[ 0 \to j(p+1)_!\Q_\ell \to j(p)_!\Q_\ell \to i(p+1)_!\Q_\ell \to 0.\]
We now consider the chain of monomorphisms
$$ 0 = j(n)_!\Q_\ell \to j(n-1)_! \Q_\ell \to \dots \to j(1)_!\Q_\ell \to j(0)_!\Q_\ell \to j(-1)_!\Q_\ell = \Q_\ell. $$
It may be considered as a decreasing filtration of $\Q_\ell$ with $F^p \Q_\ell = j(p)_!\Q_\ell$, and with associated graded pieces given by $\gr_F^p \Q_\ell = F^p\Q_\ell/F^{p+1}\Q_\ell = i(p+1)_!\Q_\ell$. There is an associated filtration of $R\Gamma_c(Y,\Q_\ell)$, and an associated cohomology spectral sequence, with $E_1$-page given by
$$E_1^{pq} = H^{p+q}_c(Y,\gr_F^p \Q_\ell) = H^{p+q}_c(Y,i(p+1)_!\Q_\ell) = H^{p+q}_c(Y_{p+1} \smallsetminus Y_{p},\Q_\ell) $$
converging to $H^\bullet_c(Y,\Q_\ell)$; this gives our spectral sequence after reindexing.   
\subsection{Subquotients of the cohomology of toroidal compactifications of \texorpdfstring{$X_{g,s}$}{Xgs}}
Let $\Sigma$ be a simplicial admissible decomposition of $\Omega_g^{\rt}$ and $\tilde{\Sigma}$ a simplicial admissible decomposition of ${\Omega}_{g,s}^{\rt}$ over $\Sigma$. We denote by $\overline{\pi^s}:\Xbar_{g,s}\rightarrow \Abar_{g}$ the associated morphism between the moduli stacks, both of which are nonsingular and proper. 
\begin{lem}\label{stratification lemma 1}
    The semisimplification of $H^i(\overline{X}_{g,s})$ is a direct sum of subquotients of $\bigoplus_{\tau\in \tilde{\Sigma} }\gr^W_{i}H^i_{c}(\gamma(\tau))$.
\end{lem}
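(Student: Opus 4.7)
The plan is to apply the spectral sequence (BMSS) from Section \ref{sec:stratacoh} to a filtration of $\Xbar_{g,s}$ whose successive differences are the toroidal strata $\gamma(\tau)$, and then extract the claim from the purity of $H^i(\Xbar_{g,s},\Qell)$.

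First I would construct a filtration $\emptyset = Y_{-1} \subset Y_0 \subset \cdots \subset Y_N = \Xbar_{g,s}$ by closed substacks whose successive differences $Y_p \smallsetminus Y_{p-1}$ are disjoint unions of strata $\gamma(\tau)$. Such a filtration exists because the closure of each stratum $\gamma(\tau)$ is a union of strata corresponding to cones containing $\tau$ as a face (the standard toric-type incidence relation for the stratifications recalled in Section \ref{sec:compact}); concretely, one may index the filtration by codimension, letting $Y_p$ be the union of all closed strata of codimension at least $\dim \Xbar_{g,s} - p$. Applying (BMSS) then yields
\[
E_1^{p,q} \;=\; \bigoplus_{\tau \in \tilde{\Sigma}_p} H^{p+q}_c(\gamma(\tau),\Qell) \;\Longrightarrow\; H^{p+q}_c(\Xbar_{g,s},\Qell),
\]
where $\tilde{\Sigma}_p$ denotes the set of cones whose stratum lies in $Y_p \smallsetminus Y_{p-1}$.

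Next I would combine this with two observations. Since $\Xbar_{g,s}$ is smooth and proper, $H^{p+q}_c(\Xbar_{g,s},\Qell) = H^{p+q}(\Xbar_{g,s},\Qell)$ is pure of weight $p+q$. Moreover, by the remarks following (BMSS), the differentials in the spectral sequence are compatible with the weight filtration, so only the pure weight $(p+q)$ piece of $E_1^{p,q}$ can survive to $E_\infty^{p,q}$. Passing to semisimplifications, each associated graded piece of the induced filtration on $H^i(\Xbar_{g,s},\Qell)$ is a subquotient of
\[
\bigoplus_{\tau \in \tilde{\Sigma}_p,\, p+q=i} \gr^W_{i} H^{i}_c(\gamma(\tau),\Qell),
\]
and summing over $p$ yields the claim.

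I do not anticipate a serious obstacle: the argument is a standard combination of a stratification spectral sequence with purity. The only minor point that needs care is verifying that the filtration by closed substacks is well-defined, which reduces to the fact that the closure of a toroidal boundary stratum is a union of strata---a standard feature of the toroidal stratification recalled in Section \ref{sec:compact}.
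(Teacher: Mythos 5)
Your proposal is correct and follows essentially the same route as the paper: the paper also filters $\Xbar_{g,s}$ by letting $Y_p$ be the union of all strata $\gamma(\tau)$ of dimension at most $p$, applies the spectral sequence \eqref{BMSS}, and uses purity of $H^{p+q}(\Xbar_{g,s})$ together with weight-compatibility of the differentials to conclude. The closure-of-strata point you flag is exactly what makes this filtration by closed substacks legitimate, and it holds for the toroidal stratification as you say.
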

\begin{proof}
The stratification described in \cref{subsec:stratification of Xgs} induces a filtration of $Y=\overline{X}_{g,s}$, with $Y_p$ the union of all strata $\gamma(\tau)$ of dimension at most $p$. The associated spectral sequence \eqref{BMSS} reads 
\begin{equation*}\label{spectralsequence2}
     \bigoplus_{\dim \gamma(\tau)=p} H^{p+q}_{c}(\gamma(\tau))\implies H^{p+q}_{c}(\overline{X}_{g,s}) = H^{p+q}(\overline{X}_{g,s}).
 \end{equation*}
Because $H^{p+q}(\overline{X}_{g,s})$ is pure, only the pure weight part of $H^{p+q}_{c}(\gamma(\tau))$ can survive to the $E_\infty$ page. Thus, the semisimplification of $H^i(\overline{X}_{g,s})$ is a direct sum of subquotients of $\bigoplus_{\tau}\gr^W_{i} H_{c}^i(\gamma(\tau))$. 

\end{proof}

\begin{lem} \label{stratification lemma 2}
  Let $\tau \in \tilde\Sigma$ be a cone of rank $r$ and dimension $d$.
  Let $k=\binom{r+1}{2}+rs-d$ be the rank of the associated torus bundle $T(\tau) \to X_{g-r,s+r}$ described in \cref{subsec:stratification of Xgs}.
  The semisimplification of $\gr^W_i H^i_c(\gamma(\tau))$ is a direct sum of subquotients of pure weight cohomology groups $\gr^W_{i} H_c^p(A_{g-r},\V_{\lambda}(-m))$, where $(\lambda,m)$ ranges over dominant weights of $\mathrm{GSp}_{2(g-r)}$ satisfying $p+|\lambda|+2m=i$, $m\geq k$, and $\lambda_1 \leq s+r$.
\end{lem}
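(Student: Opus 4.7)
The plan is to reduce $\gr^W_i H^i_c(\gamma(\tau))$ in two steps, peeling off the torus bundle $T(\tau) \to X_{g-r,r+s}$ and then the universal abelian scheme $X_{g-r,r+s} \to A_{g-r}$, before invoking Lemma \ref{lem:decomposition} to translate into symplectic local systems on $A_{g-r}$.

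First, because $\gamma(\tau) = T(\tau)/G(\tau)$ is a quotient by a finite group in characteristic zero, $H^i_c(\gamma(\tau),\Qell) = H^i_c(T(\tau),\Qell)^{G(\tau)}$, and taking invariants is exact and commutes with the weight filtration. It therefore suffices to control $\gr^W_i H^i_c(T(\tau),\Qell)$. For this, I would apply the Leray spectral sequence for the torus-bundle projection $\mathrm{pr}\colon T(\tau)\to X_{g-r,r+s}$,
\[ E_2^{a,b} = H^a_c(X_{g-r,r+s},\, R^b\mathrm{pr}_!\Qell) \Longrightarrow H^{a+b}_c(T(\tau),\Qell). \]
The fiber is a torus $\G_m^k$, whose compactly supported cohomology is concentrated in degrees $k \le b \le 2k$ and pure of weight $2(b-k)$ there. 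Hence $E_2^{a,b}$ has weights at most $a + 2(b-k)$, and the pure weight $i$ part of the abutment can receive contributions only from $b = 2k$. The sheaf $R^{2k}\mathrm{pr}_!\Qell$ is a rank-one local system with stalk $\Qell(-k)$ (possibly twisted by a quadratic character from the monodromy on the determinant of the cocharacter lattice), so after passage to Jordan--H\"older factors we may replace it by $\Qell(-k)$, giving that $\gr^W_i H^i_c(T(\tau))$ is a subquotient of $\gr^W_i H^{i-2k}_c(X_{g-r,r+s},\Qell(-k))$.

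Next, since $\pi^{r+s}\colon X_{g-r,r+s}\to A_{g-r}$ is smooth and proper, Deligne's degeneration theorem gives
\[ H^p_c(X_{g-r,r+s},\Qell(-k)) = \bigoplus_{a+b=p} H^a_c\bigl(A_{g-r},\, R^b\pi^{r+s}_*\Qell(-k)\bigr), \]
and Lemma \ref{lem:decomposition} (applied with its parameter $s$ replaced by $r+s$) decomposes each $R^b\pi^{r+s}_*\Qell(-k)$ as a direct sum of $\V_\lambda(-m)$ with $m\ge k$, $\lambda_1\le s+r$, and $|\lambda|+2m = b+2k$. Substituting $p = i-2k$, the identities $a+b = i-2k$ and $|\lambda|+2m = b+2k$ combine to $a+|\lambda|+2m = i$, and rewriting $a$ as the statement's $p$ recovers precisely the claimed constraint.

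The main delicate point is the description of the rank-one local system $R^{2k}\mathrm{pr}_!\Qell$; since the lemma only asserts ``subquotient of'' rather than an exact equality, however, any quadratic character twist coming from the monodromy on the cocharacter lattice is harmless, as its Jordan--H\"older constituents remain Tate twists of the required form.
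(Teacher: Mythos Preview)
Your approach is correct and reaches the same conclusion as the paper, but the first reduction step is organized differently. Where you run the Leray spectral sequence for $\mathrm{pr}\colon T(\tau)\to X_{g-r,r+s}$ and use a weight argument to isolate the contribution from $b=2k$, the paper instead observes that $T(\tau)$ is the complement of the zero section in a rank-$k$ vector bundle $V=\bigoplus_j L_j$ over $X_{g-r,r+s}$ (each $\G_m$-factor arising from a line bundle), so that the open--closed long exact sequence gives directly an injection
\[
\gr^W_i H^i_c(T(\tau)) \hookrightarrow \gr^W_i H^i_c(V) \cong \gr^W_i H^{i-2k}_c(X_{g-r,r+s},\Qell(-k)).
\]
After this, both proofs proceed identically via the Leray spectral sequence for $\pi^{r+s}$ and Lemma~\ref{lem:decomposition}. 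The paper's route is marginally more elementary (no weight estimate on the intermediate $E_2^{a,b}$ terms is needed), while yours has the virtue of not relying on the splitting of the torus into a product of $\G_m$-bundles coming from line bundles.

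One correction: your hedge about a possible quadratic twist on $R^{2k}\mathrm{pr}_!\Qell$ is unnecessary---the torus bundle is for a split torus and is Zariski locally trivial (the paper uses this explicitly in the proof of Lemma~\ref{lem: ec of torus rank one strata}), so all $R^q\mathrm{pr}_!\Qell$ are constant. More importantly, your proposed workaround (``Jordan--H\"older constituents remain Tate twists of the required form'') would not actually resolve a nontrivial twist: if $R^{2k}\mathrm{pr}_!\Qell$ were $\chi(-k)$ for some nontrivial local system $\chi$, there is no reason the constituents of $H^{i-2k}_c(X_{g-r,r+s},\chi(-k))$ should be subquotients of the \emph{untwisted} groups $H^p_c(A_{g-r},\V_\lambda(-m))$. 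Since no twist in fact occurs, the argument stands, but the justification in your final paragraph should be replaced by the Zariski local triviality observation.
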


\begin{proof}
 The torus bundle $T(\tau)$ is an open substack of an affine bundle over $X_{g-r,s+r}$ of rank $k$. Indeed, the torus bundle is a $k$-fold product of $\G_m$-bundles, each of which is the complement of the zero section of some line bundle $L_j$ over $X_{g-r,s+r}$. Therefore, $T(\tau)$ is the complement of the zero section of the direct sum $V:=\bigoplus_{j=1}^k L_j$. From the long exact sequence in compactly supported cohomology, we have an injection
 \[\gr^W_i H^i_c(T(\tau))\hookrightarrow \gr^W_i H^i_c(V).
 \] 
 Moreover, we have an injection
 \[
 \gr^W_i H^i_c(\gamma(\tau))\cong \gr^W_i H^i_c(T(\tau))^{G(\tau)}\hookrightarrow \gr^W_i H^i_c(T(\tau)).
 \]
 Because $V$ is a vector bundle over $X_{g-r,s+r}$ of rank $k$, we have
 \[H^i_c(V,\Q_{\ell}) \cong H^{i-2k}_c(X_{g-r,s+r},\Q_{\ell}(-k)).
 \]
Composing, we have an injection
\[
\gr_i^W H^i_c(\gamma(\tau))\hookrightarrow  \gr^W_{i} H^{i-2k}_c(X_{g-r,s+r},\Q_{\ell}(-k)) .
\]
To compute $\gr^W_{i} H^{i-2k}_c(X_{g-r,s+r},\Q_{\ell}(-k))$, we apply the Leray spectral sequence for compactly supported $\ell$-adic cohomology for the projection map $\pi^{s+r}:X_{g-r,s+r}\rightarrow A_{g-r}$. The Leray spectral sequence
\begin{equation}\label{Leray}
            E_2^{p,q} = H^p_c(A_{g-r},R^q\pi^{s+r}_*\Q_{\ell}(-k))\implies H_c^{p+q}(X_{g-r,s+r},\Q(-k))
\end{equation}
is compatible with weights, so the semisimplification of $\gr^W_{i}H^{i-2k}_c(X_{g-r,s+r},\Q_{\ell}(-k))$ is a direct sum of subquotients of $
\gr^W_{i}H^p_c(A_{g-r},R^q\pi^{s+r}_*\Q_{\ell}(-k))$, for $p+q=i-2k$. Applying Lemma \ref{lem:decomposition} yields the result.
\end{proof}

	\begin{prop}\label{prop:allsubquotients} The semisimplification of $H^i(\overline X_{g,s})$ is a direct sum of subquotients of intersection cohomology groups $\IH^p(A^\Sat_{g-r},\V_{\lambda}(-m))$, where $g\geq r \geq 0$, and $(\lambda,m)$ ranges over dominant weights of $\mathrm{GSp}_{2(g-r)}$ satisfying $p+|\lambda|+2m=i$, and $m\geq \binom{r+1} 2 + rs$, and $\lambda_1 \leq s+r$.
	\end{prop}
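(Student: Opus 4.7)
The plan is to combine \cref{stratification lemma 1} and \cref{stratification lemma 2} with a Gysin-type refinement to account for the codimension of strata, and then to compare compactly supported cohomology of $A_{g-r}$ with intersection cohomology of $A^{\Sat}_{g-r}$.

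Applying \cref{stratification lemma 1}, then \cref{stratification lemma 2} stratum-by-stratum, shows immediately that the semisimplification of $H^i(\overline X_{g,s})$ is a direct sum of subquotients of $\gr^W_i H^p_c(A_{g-r},\V_\lambda(-m))$ subject to $p+|\lambda|+2m=i$, $\lambda_1 \leq s+r$, and $m \geq k(\tau) = \binom{r+1}{2}+rs - d(\tau)$, where $\tau$ ranges over cones of rank $r$ and $d(\tau) = \dim \tau$. This bound on $m$ depends on $\tau$ and is in general weaker than the uniform bound $m \geq \binom{r+1}{2}+rs$ claimed in the Proposition. To bridge the gap, I would exploit that $\gamma(\tau)$ has codimension $d(\tau)$ in the smooth proper stack $\overline X_{g,s}$: replacing the open BMSS of \cref{stratification lemma 1} by a closed-stratum (Gysin) weight spectral sequence associated to the NCD boundary $\bigcup \overline{\gamma(\tau_j)}$, each term in the $E_1$-page carries an additional Tate twist $(-d(\tau))$ coming from the Gysin push-forward. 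Combining this with the bound $m \geq k(\tau)$ for the local coefficient side, the effective Tate twist on any surviving contribution is $m' = m+d(\tau) \geq k(\tau) + d(\tau) = \binom{r+1}{2}+rs$, as desired.

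For the last step, the natural map $j_! \V_\lambda \to j_{!*} \V_\lambda$ associated to the open inclusion $j\colon A_{g-r} \hookrightarrow A^{\Sat}_{g-r}$ induces $H^p_c(A_{g-r},\V_\lambda(-m)) \to \IH^p(A^{\Sat}_{g-r},\V_\lambda(-m))$. The target is pure of weight $p+|\lambda|+2m = i$, while the kernel of this map consists of classes of lower weight (supported on deeper strata of the Satake compactification), so the restriction to $\gr^W_i H^p_c$ is injective and exhibits the latter as a sub-object of $\IH^p(A^{\Sat}_{g-r},\V_\lambda(-m))$.

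The main obstacle will be making the Gysin refinement in the second paragraph rigorous, since \cref{stratification lemma 1} uses the BMSS in its open form and does not by itself incorporate Tate twists from codimension. A clean alternative, which I would attempt in parallel, is to invoke the decomposition theorem for the proper morphism $\overline X_{g,s} \to A_g^{\Sat}$, expressing $Rf_*\Q_\ell$ as a direct sum of shifted pure IC sheaves $\IC(A_{g-r}^{\Sat}, \V_\lambda)(-m)$ on the Satake strata; the weights of the summands can then be read off from fiber dimensions, directly producing the bound $m \geq \binom{r+1}{2}+rs$ together with the constraint $\lambda_1 \leq s+r$ inherited from \cref{lem:decomposition}.
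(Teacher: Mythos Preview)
Your core argument is exactly the paper's proof: apply \cref{stratification lemma 1}, then \cref{stratification lemma 2}, then embed the top-weight quotient of $H^p_c(A_{g-r},\V_\lambda(-m))$ into $\IH^p(A^{\Sat}_{g-r},\V_\lambda(-m))$. The paper phrases that last step as an appeal to the argument of \cite[Lemma~2]{durfee}, which is the same weight-filtration reasoning you give in your third paragraph.

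You go further than the paper in one respect. You correctly observe that \cref{stratification lemma 2} only yields $m \geq k(\tau) = \binom{r+1}{2}+rs - \dim\tau$, not the uniform $m \geq \binom{r+1}{2}+rs$ asserted in the Proposition, and you then try to recover the sharper bound via a Gysin refinement or the decomposition theorem. The paper's own proof does not address this discrepancy at all: it is two sentences asserting that the result follows from the two Lemmas and the Durfee injection. So the gap you worry about is present in the paper too, and your second and fourth paragraphs go beyond what the paper does rather than fall short of it. (For what it is worth, the main applications to \cref{thm:main} and \cref{thm:tateornot} only use the weaker $\tau$-dependent bound that the Lemmas actually give; the uniform bound is invoked in the proof of \cref{thm:holomorphic}.)

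One caution on your Gysin fix, which you rightly flag as the obstacle: the NCD weight spectral sequence has $E_1$-terms of the form $H^{i-2d}(\tilde D_d,\Q_\ell(-d))$ where $\tilde D_d$ is the \emph{normalization of the closed} depth-$d$ locus, not the open stratum $\gamma(\tau)$. These $\tilde D_d$ are smooth proper stacks which are themselves stratified by smaller $\gamma(\tau')$, so combining this spectral sequence with \cref{stratification lemma 2} would require an iteration you have not spelled out. Your decomposition-theorem alternative over $A_g^{\Sat}$ is a more direct route to the uniform bound, but identifying all the perverse summands on each Satake stratum is substantially more than what the paper undertakes here.
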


    \begin{proof}
        This follows from \cref{stratification lemma 1} and \cref{stratification lemma 2}, since each  $\gr^W_{i} H_c^p(A_{g-r},\V_{\lambda}(-m))$ occurring in \cref{stratification lemma 2} is a subquotient of the corresponding intersection cohomology group $\IH^p(A^\Sat_{g-r},\V_{\lambda}(-m))$. Indeed, it follows from the argument of \cite[Lemma 2]{durfee} (but with coefficients taken in a local system) that the natural map $$H_c^p(A_{g-r},\V_{\lambda}(-m)) \to \IH^p(A^\Sat_{g-r},\V_{\lambda}(-m))$$ induces an injection on the pure weight quotient.
    \end{proof}

\begin{proof}[Proof of Theorem \ref{thm:main}]
    The result now follows by combining Proposition \ref{prop:allsubquotients} and Theorem \ref{thm IH}.
\end{proof}

\section{Non-Tate cohomology}\label{sec:nontate}

	\begin{lem}\label{hansen lemma}
		Let $f\colon X \to Y$ be a projective morphism of separated Deligne--Mumford stacks of finite type over a field $k$, with $X$ nonsingular. Let $j: U \hookrightarrow Y$ be a dense open embedding such that $f^{-1}(U) \to U$ is smooth. Then $\oplus_n j_{!\ast}j^\ast R^nf_\ast\Q_\ell[-n]$ is a direct summand of $Rf_\ast\Q_\ell$.
	\end{lem}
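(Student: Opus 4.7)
The plan is to invoke the decomposition theorem of Beilinson--Bernstein--Deligne--Gabber. Since $X$ is nonsingular and $f$ is proper (being projective), $Rf_\ast\Q_\ell[\dim X]$ is pure, and BBDG provides a direct sum decomposition
\[ Rf_\ast\Q_\ell[\dim X]\simeq \bigoplus_\alpha \IC_Y(Z_\alpha,\mathcal{L}_\alpha)[m_\alpha], \]
in which each summand is the intermediate extension of a simple lisse $\Qell$-sheaf $\mathcal{L}_\alpha$ from a smooth open dense substack of an irreducible closed substack $Z_\alpha\subseteq Y$. In the DM stack setting this is available by the work of Laszlo--Olsson on the perverse $t$-structure on stacks (or by reduction to schemes via smooth atlases together with the original arguments of BBDG).

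Next, I would split the decomposition as $A\oplus B$, where $A$ collects the summands with $Z_\alpha\cap U\neq\emptyset$ and $B$ the rest; then $A$ is a direct summand of $Rf_\ast\Q_\ell[\dim X]$ and $B|_U=0$. On the other hand, since $f^{-1}(U)\to U$ is smooth and projective, Deligne's theorem on the degeneration of the relative Leray spectral sequence gives
\[ Rf_\ast\Q_\ell|_U \simeq \bigoplus_n (j^\ast R^n f_\ast\Q_\ell)[-n], \]
a sum of shifted lisse sheaves. Comparing with $A|_U$ and using uniqueness of semisimple decompositions (each simple summand of $A|_U$ must be a simple perverse sheaf on $U$ whose support is all of $U$), we deduce that every $Z_\alpha$ appearing in $A$ must equal $Y$ and that the $\mathcal{L}_\alpha$ are the simple constituents of the various $j^\ast R^n f_\ast\Q_\ell$. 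Matching isotypic components then identifies $A$ with $\bigoplus_n j_{!\ast}(j^\ast R^n f_\ast\Q_\ell)[\dim X - n]$, using the standard convention that $j_{!\ast}$ applied to a lisse sheaf denotes the intermediate extension of its perverse shift, undone by a compensating shift. Desuspending by $[\dim X]$ yields the claim.

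The main difficulty is not computational but foundational: one must ensure that BBDG, Deligne's degeneration theorem, and the perverse $t$-structure are available in the required generality for DM stacks over a field, and interpret $j_{!\ast}$ applied to a lisse sheaf (rather than to a perverse sheaf) via the natural shift convention. A minor subtlety is that $U$ need not be smooth in general, but smoothness of $X$ together with smoothness of $f^{-1}(U)\to U$ forces smoothness of $U$ at every point in the image of $f$, and one may replace $Y$ by the closed image of $f$ to reduce to the surjective case.
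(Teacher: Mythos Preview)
Your approach has a genuine gap at the very first step. You invoke the decomposition theorem directly over the base field $k$ to obtain a splitting of $Rf_*\Q_\ell$ into shifted simple IC sheaves, but BBDG provides such a decomposition only after base change to $\overline{k}$ (or, in its original form, with $\Qellbar$-coefficients over a finite field), and the splitting it produces is \emph{non-canonical}. There is no a priori reason for a non-canonical $\overline{k}$-splitting to descend to $k$, and this is exactly the point the paper singles out as the obstruction.

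The paper repairs this in two steps, each of which is canonical and hence defined over $k$. First, to split $Rf_*\Q_\ell$ as $\bigoplus_n {}^\mathfrak{p}H^n(Rf_*\Q_\ell)[-n]$ over $k$, it invokes relative Hard Lefschetz (available since $f$ is projective and $X$ nonsingular): the Lefschetz isomorphisms are canonical, and Deligne's formal splitting criterion, together with Beilinson's realization of $D^b_c(Y,\Q_\ell)$ as the derived category of perverse sheaves, converts them into a $k$-rational decomposition of the complex. Second, to split off the generic part of each perverse cohomology sheaf, the paper appeals to the argument of Hansen--Zavyalov, which shows directly (over $k$, with no passage to $\overline{k}$) that if $j^*P$ is a shifted local system then $j_{!*}j^*P$ is a direct summand of the perverse sheaf $P$.

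Your $A\oplus B$ manoeuvre and the identification of $A$ via Deligne degeneration on $U$ are fine \emph{once} a simple-IC decomposition is available over $k$, but that is precisely what needs the two ingredients above. The foundational remarks in your final paragraph (Laszlo--Olsson, passage to coarse spaces, smoothness of $U$) are in the right spirit but orthogonal to the real issue.
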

	
	\begin{proof}By the decomposition theorem \cite{bbd}, the result holds for $X_{\overline k} \to Y_{\overline k}$, but because the splitting afforded by the decomposition theorem is not canonical, it has no \emph{a priori} reason to descend to $k$.
    But by relative Hard Lefschetz and \cite{delignedegeneration}, and also using that $D^b_c(Y,\Q_\ell)$ is the derived category of perverse sheaves \cite{beilinsonperverse}, we see that $Rf_\ast\Q_\ell \simeq \bigoplus_n {}^\mathfrak{p}H^n(Rf_\ast\Q_\ell)[-n]$ over $k$.
    And by the argument of \cite[Lemma 3.2.6]{hansenzavyalov}, $j_{!\ast} j^\ast{}^\mathfrak{p}H^n (Rf_\ast\Q_\ell)$ is a direct summand of ${}^\mathfrak{p}H^n (Rf_\ast\Q_\ell)$ for all $n$, using that $j^\ast {}^\mathfrak{p}H^n (Rf_\ast\Q_\ell)$ is a shifted local system. The fact that $X$ and $Y$ are stacks plays no role in the argument. Indeed, we may pass to coarse spaces, and since the coarse space $cX$ of $X$ has only finite quotient singularities, it still satisfies $\IC_{cX}=\Q_\ell[\dim cX]$. 
	\end{proof}

Recall the function $c(g)$ defined in Table \ref{nontatetable} and used in the statement of Theorem \ref{thm:tateornot}.
\begin{proof}[Proof of Theorem \ref{thm:tateornot} for $(g,s)\neq (3,6), (3,7)$]  
Suppose $g \geq 4$ and $s \geq c(g)=7-g$ or $g=3$ and $s\geq 8$.
Now apply \cref{hansen lemma} to the map $f:\Xbar_{g,s}\to A_g^\Sat$, with $j : A_g \to A_g^\Sat$. Then 
\[ \IH^{\bullet-q}(A^{\Sat}_g, R^q \pi^s_* {\Qell}) \hookrightarrow H^\bullet(\Xbar_{g,s},\Qell). \]
\begin{itemize}
\item For $g=3$ and $s\geq8$, taking $\lambda=(8,4,4)$ the coefficient system $\V_{\lambda}$ occurs in $R^{16}\pi^s_* {\Qell}$ and the parameter $\Delta_{11}[4] \oplus (\Sym^2\Delta_{11})[1]$ contributes a non-Tate Galois representation to $\IH^\bullet(A^{\Sat}_g,\V_{\lambda})$ (see Table \ref{tab:DEF}).
\item For $4 \leq g \leq 7$ and $s \geq 7-g=c(g)$, the coefficient system $\V_\lambda$ occurs in $R^{4c(g)} \pi^s_* {\Qell}$ with $\lambda = (c(g),c(g),c(g),c(g),0,\dots,0)$ and the parameter $\Delta_{11}[4] \oplus [2g-7]$ contributes a non-Tate Galois representation to $\IH^\bullet(A^{\Sat}_g, \V_\lambda)$ (see Table \ref{tab:DEF}).
\item For $g=8$ we need to show that $\IH^\bullet(A^{\Sat}_8, \Qell)$ is not Tate.
  For this we consider the parameter $\psi = \Delta_{11}[6] \oplus [5]$.
  We have $u_1(\psi) = - \epsilon(\tfrac{1}{2}, \Delta_{11})^{\min(5,6)} = -1$.
  Computing the composite $\spin_{\Delta_{11}[6]}^- \circ \tilde{\alpha}_{\Delta_{11}[6]}$ in \cite[\S 6.3]{taibi_ecAn} using weights we see that $(\Sym^2 S_\ell \langle 12 \rangle)(-3)$ appears in $\IH^{28}(A_8^\Sat, \Qell)$.
\item For $g \geq 9$, we again need to show $\IH^\bullet(A^{\Sat}_g, \Qell)$ is not Tate. We use parameters of the form $\Delta_{2g-3}^{(j)}[4] \oplus [2g-7]$ (note that $2g-3 \geq 15$ so $s\langle{2g-3}\rangle > 0$).
  We have $u_1(\psi) = \epsilon(\tfrac{1}{2}, \Delta_{2g-3}^{(j)})^{\min(4,2g-7)} = 1$ since $2g-7 \geq 11 > 4$, so the computation is identical to the one in \ref{it:typeE}, and we find that $\oplus_j \Sym^2 \rho_{\Delta_{2g-3}^{(j)}}$ appears in $\IH^{4g-6}(A_g^\Sat, \Qell)$.
\end{itemize}

Now suppose $g\geq 4$ and $g + s\leq 6$.
By Proposition \ref{prop:allsubquotients}, it suffices to show that $\IH^\bullet(A^\Sat_{g-r},\V_{\lambda})$ is Tate, for all $g\geq r \geq 0$ and $\lambda$ ranging over dominant weights of $\mathrm{Sp}_{2(g-r)}$ such that $\lambda_1 \leq s+r$.
Because of the inequality $\lambda_1 + g-r \leq 6$ the only contributing parameters are of the form $[2g+1]$ or $\Delta_{11}[2] \oplus [2d+1]$.
In the first case the contribution is clearly Tate, and in the second case as well by the calculation in \ref{it:typeC}. For $g=1$ and $s<10$, examining Tables \ref{tab:rest}, \ref{tab:C}, and \ref{tab:DEF} shows that there are no nontrivial parameters contributing, and so the only contribution is Tate. If $g=2$ and $s<7$, the only contributing parameters are $[5]$, $\Delta_{11}[2]\oplus [1]$, and $\Delta_{15}[2]\oplus [1]$. The first contribution is clearly Tate, and the latter contributions are Tate by the calculations in \ref{it:typeC}. Finally, if $g=3$ and $s<6$, the only contributing parameters are $[7]$, $\Delta_{11}[2]\oplus [3]$, and $\Delta_{15}[2]\oplus [3]$. The first contribution is clearly Tate, and the latter contributions are Tate by the calculations in \ref{it:typeC}.
\end{proof}

It remains to prove Theorem \ref{thm:tateornot} for $(g,s)=(3,6)$ or $(3,7)$. The strategy for showing that $H^{\bullet}(\Xbar_{3,s},\Qell)$ is not Tate is different from the other cases. We will find that  $\IH^\bullet(A_3^{\Sat},\V_{\lambda})$ is Tate for all contributing local systems $\V_{\lambda}$. Instead we shall add up, for every stratum $S$, the compactly supported Euler characteristic $e_c(S)$ taken in the Grothendieck group of $\ell$-adic Galois representations. The non-Tate contributions will all be Tate twists of the Galois representation $S_\ell \langle 18 \rangle$. We will find that these occur in $e_c(S)$ both when $S$ is the interior and when $S$ is a torus rank one stratum, with opposite signs, and we need to verify that the contributions cannot cancel.

\begin{defn}
    Let $M,N$ be elements in the Grothendieck group of $\ell$-adic Galois representations. We write $M \equiv N$ if $M-N$ is a polynomial in $L := \Qell(-1)$.
\end{defn}

\subsection{Interior}

\begin{lem}\label{lem:X36X37}
  We have
  \begin{align} \label{eq:ec X_3_6}
    e_c(X_{3,6}) &\equiv (L^6 + 21L^5 + 120L^4 + 280L^3 + 309L^2 + 161L + 32) \cdot S_\ell \langle 18 \rangle, \\
	\label{eq:ec X_3_7}
	e_c(X_{3,7}) &\equiv  (28 L^9 + 784 L^8 + 6616 L^7 + 25984 L^6 + 57400 L^5 + 77952 L^4 + \\ & \hspace{3cm}+67032 L^3 + 35728 L^2 + 10780 L + 1408) \cdot S_\ell \langle 18 \rangle. \nonumber
\end{align}

\end{lem}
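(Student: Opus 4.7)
The plan is to decompose $e_c(X_{3,s})$ via the Leray spectral sequence for $\pi^s$, and then extract the non-Tate part using prior computations of Euler characteristics of local systems on $A_3$.

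Since $\pi^s : X_{3,s} \to A_3$ is smooth and proper, Deligne's decomposition theorem (valid on $A_3$ even though the base is not proper, since $\pi^s$ is smooth projective) gives a canonical isomorphism
\[ R(\pi^s)_* \Qell \simeq \bigoplus_q R^q (\pi^s)_* \Qell [-q] \]
in $D^b_c(A_3, \Qell)$. Taking compactly supported cohomology and then Euler characteristics in the Grothendieck group of $\ell$-adic Galois representations yields
\[ e_c(X_{3,s}) = \sum_q (-1)^q e_c(A_3, R^q(\pi^s)_* \Qell). \]
Combining with Lemma \ref{lem:decomposition} rewrites this as a $\Z[L]$-linear combination of Euler characteristics $e_c(A_3, \V_\lambda)$ over dominant weights $\lambda$ of $\Sp_6$ with $\lambda_1 \leq s$, where the coefficients are branching multiplicities that can be read off from the character theory of $\Sp_6$.

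Next I would use known formulas for $e_c(A_3, \V_\lambda)$ modulo $L$-polynomials. These are computed via Bergstr\"om--Faber \cite{BF} and more generally by the trace formula approach of the third author \cite{taibi_ecAn}, which writes $e_c(A_3, \V_\lambda)$ as a sum of contributions from Arthur--Langlands parameters for $\Sp_6$ together with Eisenstein contributions from proper Levi subgroups ($\GL_1 \times \Sp_4$, $\GL_2 \times \Sp_2$, $\GL_3$, and further refinements). For $\lambda_1 \leq 7$ the weight bound $|\lambda| \leq 21$ together with a case-by-case scan of Tables \ref{tab:rest}, \ref{tab:C}, \ref{tab:DEF} shows that the $\Sp_6$-contribution is Tate; and the only non-Tate Galois representation that can enter from the Eisenstein terms is $S_\ell\langle 18 \rangle$, coming from Eisenstein lifts of the $A_2$ parameter $\Delta_{17}[2] \oplus [1]$ (whose contribution to $\IH^\bullet(A_2^\Sat, \V_{(7,7)})$ is $S_\ell\langle 18 \rangle$; all other potentially relevant cuspidal contributions $S_\ell\langle w \rangle$ with $w < 18$ give only Tate classes after Eisenstein induction in this range, by direct inspection of the tables).

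Finally, I would tabulate, for each $\lambda$ with $\lambda_1 \leq s$, the coefficient (as an element of $\Z[L]$) of $S_\ell\langle 18 \rangle$ in $e_c(A_3, \V_\lambda)$, multiply by the branching multiplicity and Tate twist arising in the first step, and sum to arrive at \eqref{eq:ec X_3_6} and \eqref{eq:ec X_3_7}. The main obstacle is precisely this last step: while each individual contribution is a routine calculation, the number of $\lambda$'s involved grows rapidly with $s$, and pinning down the explicit integer coefficients $1, 21, 120, \dots$ (respectively $28, 784, 6616, \dots$) requires systematic bookkeeping, most naturally carried out by computer algebra.
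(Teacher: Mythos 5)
Your proposal is correct and follows essentially the same route as the paper: decompose $e_c(X_{3,s})$ over $A_3$ via Lemma \ref{lem:decomposition}, use \cite[Theorem 8.3.1]{taibi_ecAn} to see that the only non-Tate contribution to the relevant $e_c(A_3,\V_\lambda)$ is $S_\ell\langle 18\rangle$ coming from $\Delta_{17}[2]\oplus[1]$ via $e_{\IH}(A_2^{\Sat},\V_{7,7})$, and then sum the branching multiplicities (the paper likewise does $s=6$ by hand and $s=7$ by computer).
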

\begin{proof}
We go back to the proof of Lemma \ref{lem:decomposition} and compute the multiplicity of each relevant $V_\lambda$ in
\[ V_{1^{i_1-2j_1}} \otimes \dots \otimes V_{1^{i_s-2j_s}}. \]
More precisely we care about dominant weights $\lambda$ satisfying $\lambda_1 \leq 7$ and such that $e_c(A_3, \V_\lambda)$, which we can compute using \cite[Theorem 8.3.1]{taibi_ecAn}, is not Tate.
This formula expresses $e_c(A_3, \V_\lambda)$ as a linear combination of $e_{\IH}(A_g^\Sat, \V_{\lambda'})$ where $g \leq 3$ and $\lambda_1' + g \leq s+3$.
Denoting $\tau = \lambda' + \rho$ this inequality implies that for any
\[ \psi = \psi_0 \oplus \dots \oplus \psi_r \in \tPsidut(\Sp_{2g}) \]
with $\psi_i = \pi_i[d_i]$ we have $\pi_i \in \{1, \Delta_{11}, \Delta_{15}, \Delta_{17}, \Delta_{19}, \Delta_{19,7}\}$ for all $i$.
Because we have $g \leq 3$ we cannot have $\pi_i = \Delta_{19,7}$ (note that in this case $d_i \geq 2$ as it is even), and similarly we conclude
\[ \psi_i \in \{[1], [3], [5], [7], \Delta_{11}[2], \Delta_{15}[2], \Delta_{17}[2] \}. \]
Thus $\psi$ is either of the form $[2d+1]$ or $\Delta_w[2] \oplus [2d+1]$, and we know \ref{it:typeC} that intersection cohomology is Tate in these cases except for $\psi = \Delta_{17}[2] \oplus [1]$, which occurs for $g=2$ and $\V_{7,7}$.
By enumeration, the relevant dominant weights are just $\lambda = (6,6,\lambda_3)$ for $\lambda_3 \in \{0,2,4,6\}$, for which we compute (details omitted) using \cite[Theorem 8.3.1]{taibi_ecAn}
\[ e_c(A_3, \V_\lambda) \equiv - e_{\IH}(A_2^\Sat, \V_{7,7}) \equiv S_\ell \langle 18 \rangle. \]
We need to consider all tuples of integers $(i_1, \dots, i_s)$ satisfying $0 \leq i_k \leq 3$, all tuples $(j_1,\dots,j_s)$ of integers satisfying $0 \leq 2j_k \leq i_k$, and all subsets $S$ of $\{1,\dots,s\}$ satisfying $i_k \neq 3$ for all $k \in S$, such that $\V_{6,6,\lambda_3}(-m)$ (with $\lambda_3$ even) occurs in
\[ \V_{1^{i_1-2j_1}} \otimes \dots \otimes \V_{1^{i_s-2j_s}} \left( -\sum_{k=1}^s j_k + \sum_{k \in S} i_k-g \right). \]
This implies $\# \{ k \,|\, i_k-2j_k \geq 2 \} \geq 6$ and $q=\sum_{k=1}^s i_k$ even.
\begin{itemize}
\item We first consider the case $s=6$.
  We have $2 \leq i_k \leq 3$ and $j_k=0$ for all $k$.
  Weight computations yield
  \begin{align*}
  \V_{1^3}^{\otimes 6} &\simeq \V_{6,6,6} \oplus \V_{6,6,4}(-1)^{\oplus 5} \oplus \V_{6,6,2}(-2)^{\oplus 9} \oplus \V_{6,6,0}(-3)^{\oplus 5} \oplus \text{irrelevant} \\
  \V_{1^3}^{\otimes 4} \otimes \V_{1^2}^{\otimes 2} &\simeq \V_{6,6,4} \oplus \V_{6,6,2}(-1)^{\oplus 3} \oplus \V_{6,6,0}(-2)^{\oplus 2} \oplus \text{irrelevant} \\
  \V_{1^3}^{\otimes 2} \otimes \V_{1^2}^{\otimes 4} &\simeq \V_{6,6,2} \oplus \V_{6,6,0}(-1) \oplus \text{irrelevant} \\
  \V_{1^2}^{\otimes 6} &\simeq \V_{6,6,0} \oplus \text{irrelevant}. \\
  \end{align*}
  Taking into account the number of times each of these tensor products occurs as well as the possible sets $S$ as above we obtain, writing $1$ for $\Qell$ and $L$ for $\Qell(-1)$
  \begin{align*}
    e_c(X_{3,6}) \equiv &\ e_c(A_3, \V_{6,6,6}) + (15+35L+15L^2) \cdot e_c(A_3, \V_{6,6,4}) \\
    &\ + (15+105L+189L^2+105L^3+15L^4) \cdot e_c(A_3, \V_{6,6,2}) \\
    &\ + (1+21L+105L^2+175L^3+105L^4+21L^5+L^6) \cdot e_c(A_3, \V_{6,6,0}) \\
    \equiv &\ (L^6 + 21L^5 + 120L^4 + 280L^3 + 309L^2 + 161L + 32) \cdot S_\ell \langle 18 \rangle
  \end{align*}
\item Now consider the case $s=7$.
  The computations are similar to the $s=6$, only more tedious, so we had them done by a computer and found \eqref{eq:ec X_3_7}. \qedhere
\end{itemize}

\end{proof}
  \todo[inline]{TODO: check independently}

\subsection{Torus rank \texorpdfstring{$1$}{1} part of the boundary}

 Let $\Xbar_{g,s}$ be a toroidal compactification of $X_{g,s}$. By the torus rank $1$ part of the boundary we mean the union of the strata corresponding to cones of rank $1$. Equivalently, this is the locus in $\Xbar_{g,s}$ over which the universal semiabelian variety (pulled back from $\Abar_g$) has torus rank $1$.
 
 We will need to describe more explicitly the rank $1$ cones in an admissible decomposition of $\Omega_{g,s}^\rt$.
 When $s=0$, it is a direct verification that there is a unique orbit of cones $\sigma$ of rank $1$ in any admissible decomposition of $\Omega_g^\rt$.
 A rank one cone $\sigma$ is one-dimensional, of the form $\R_{\geq 0} q$, and has $G(\sigma) = \mathrm{GL}_1(\Z) = \{\pm 1\}$.
 We fix such a cone $\sigma$.
 The corresponding stratum $\beta(\sigma)$ in $\Abar_g$ is a boundary divisor isomorphic to $[X_{g-1,1}/\{\pm 1\} ]$. When $s>0$, the inverse image of $\sigma$ in $\Omega_{g,s}^\rt$ is isomorphic to $\R_{\geq 0} q \times \R^s$.
 Associating to each rank $1$ cone $\tau$ in $\Omega_{g,s}^\rt$ above $\sigma$ the convex polyhedron $\tau^\aff := \tau \cap \{q\} \times \R^s$ we obtain that the torus rank $1$ part of the boundary is described combinatorially by a polyhedral decomposition of $\R^s$, equivariant for the action of $\Z^s \rtimes \{\pm 1\}$.
 Torus rank $1$ strata inside $\Xbar_{g,s}$ are then in bijection with $(\Z^s \rtimes \{\pm 1\})$-orbits of polyhedra.
 If the associated toroidal compactification is smooth, then we have a triangulation of $\R^s$.

\begin{prop}\label{prop:number of torus rank one strata}Let $\widetilde\Sigma$ be an admissible decomposition of $\Omega^\rt_{g,s}$. In this decomposition, we have:
\begin{enumerate}
    \item If $\tau$ is a rank $1$ cone, then $G(\tau)$ is trivial or cyclic of order $2$.
    \item There are exactly $2^s$ orbits of rank $1$ cones with $G(\tau) \cong \Z/2$.
    If the decomposition is simplicial, then all of them have dimension $1$ or $2$.
\end{enumerate}
    
\end{prop}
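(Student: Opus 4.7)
The plan is to translate the statement into combinatorics of the polyhedral decomposition $\Delta$ of $\R^s$ described in the paragraph preceding the proposition. Under the bijection $\tau \leftrightarrow C := \tau^\aff$ between rank $1$ cones above a fixed $\sigma = \R_{\geq 0} q$ and cells of $\Delta$, the group $G(\tau) \subset \{\pm 1\} \ltimes \Z^s$ becomes the stabilizer of $C$ under the action $(w, \epsilon) \cdot \ell = \epsilon \ell + w$ of $\Z^s \rtimes \{\pm 1\}$ on $\R^s$. I would first observe that cells of $\Delta$ must be bounded convex polytopes: a nonzero $(w,1) \in G(\tau) \cap \Z^s$ would be a period of $C$, but admissibility together with the requirement that the toroidal compactification be Deligne--Mumford forces $G(\tau)$ to be finite, ruling out any nonzero periods.

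For part (1), boundedness of $C$ gives $G(\tau) \cap \Z^s = \{0\}$, so the projection $G(\tau) \to \{\pm 1\}$ is injective; hence $G(\tau)$ is trivial or isomorphic to $\Z/2$. For part (2), any involution in $\{\pm 1\} \ltimes \Z^s$ not lying in $\Z^s$ has the form $(v, -1)$, acting on $\R^s$ as $\iota_v \colon \ell \mapsto -\ell + v$ with unique fixed point $v/2$. Given $\tau$ with $G(\tau) = \{\mathrm{id},\,(v,-1)\}$, convexity plus $\iota_v$-stability of $C$ forces $v/2 \in C$, and averaging an interior point of $C$ with its $\iota_v$-image shows that $v/2$ lies in the relative interior of $C$. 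Conversely, for any $v \in \Z^s$ the point $v/2$ lies in the relative interior of a unique cell $C_v$ of $\Delta$; since $\iota_v$ preserves $\Delta$ and fixes $v/2$, it must preserve $C_v$, so the associated cone $\tau_v$ has $G(\tau_v) \cong \Z/2$.

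A direct calculation gives $(w,\epsilon)(v,-1)(w,\epsilon)^{-1} = (2w + \epsilon v,\,-1)$, so the orbit of $(v,-1)$ under the full action of $\Z^s \rtimes \{\pm 1\}$ is determined by $\bar v \in \Z^s / 2\Z^s = (\Z/2)^s$ (the sign $\epsilon$ is irrelevant since $-\bar v = \bar v$ there). Combining this with the preceding paragraph, the assignment $[\tau] \mapsto \bar v$ is a well-defined bijection between orbits of rank $1$ cones with $G \cong \Z/2$ and the $2^s$ elements of $(\Z/2)^s$. In the simplicial case, I will then show that a simplex of dimension $\geq 2$ admits no central symmetry: an affine involution of a $d$-simplex permutes its $d+1$ vertices, and its fixed set equals the affine span of the fixed vertices together with the midpoints of transposed pairs; affine independence of the vertices forces at most one fixed vertex (equal to the center) and at most one transposed pair (with that same midpoint), so $d + 1 \leq 2$. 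Hence cells with $G \cong \Z/2$ are vertices or edges, and the associated cones have dimension $1$ or $2$.

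The main technical obstacle I anticipate is the initial reduction: carefully matching $G(\tau)$ with the $\R^s$-stabilizer and justifying that admissibility together with finiteness of $G(\tau)$ rules out cells with nontrivial lineality. Once these are in place, the core of the argument is an elementary mix of convex geometry (the center of an $\iota_v$-invariant convex set is an interior point) and an explicit conjugation calculation in $\Z^s \rtimes \{\pm 1\}$.
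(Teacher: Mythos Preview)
Your proposal is correct and follows essentially the same approach as the paper. The only cosmetic difference is that the paper passes to the quotient torus $(\R/\Z)^s$ with its $\{\pm 1\}$-action and counts the $2^s$ two-torsion fixed points there, whereas you work equivariantly upstairs in $\R^s$ and perform the conjugation calculation in $\Z^s \rtimes \{\pm 1\}$; your simplex argument (fixed vertices and midpoints of transposed pairs are affinely independent, forcing $d+1\le 2$) is the same as the paper's ``two fixed boundary points $\Rightarrow$ fixed line'' argument. One small correction to your setup: boundedness of the cells $\tau^\aff$ (equivalently, finiteness of $G(\tau)$) does not need the Deligne--Mumford hypothesis---it follows directly from the defining condition on $\Omega^\rt_{g,s}$ that each $L_i$ vanish on $\ker Q$, which forces the only element of $\tau$ with $Q$-component zero to be the origin.
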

\begin{proof}The stabilizer of a rank $r$ cone is a finite subgroup of $(\Z^r)^s \rtimes \mathrm{GL}_r(\Z)$, hence a finite subgroup of $\mathrm{GL}_r(\Z)$. This implies the first part. 

For the second part, we consider the induced cell decomposition of the real torus $(\R/\Z)^s$.
The cell decomposition is equivariant under the action of $\pm 1$, and a cone with $G(\tau) \cong \Z/2$ corresponds to a cell which is fixed by $\pm 1$.
Topologically, a fixed cell is necessarily homeomorphic to $\R^d$ (for $d=\dim \tau^\aff = \dim \tau - 1$), with the action by $\pm 1$. In particular, each fixed cell has a unique fixed point. But the action of $\pm 1$ on an $s$-dimensional real torus has exactly $2^s$ fixed points. 

For the last part, consider an involution of a simplex $\tau^\aff$. We claim that if the involution has isolated fixed points, then the simplex is a vertex or an edge. Indeed, if the simplex has more than two vertices, then the involution acting on the set of vertices is not transitive. The involution fixes at least two distinct points on the boundary (vertices or midpoints of edges), but then also the line between these two points is fixed, a contradiction.
\todo[inline]{O: alternative argument (simpler for me, maybe not for everyone): letting $b$ be the barycenter of $\tau^\aff$ there is a linearly independent family $(e_1,\dots,e_d)$ of $\R^s$ such that the vertices of $\tau^\aff$ are $b+e_1,\dots,b+e_d$ and $b-\sum_i e_i$ and the involution is $b+v \mapsto b-v$, and we see that the set of vertices is not preserved by this involution if $d>1$.}
\end{proof}

\begin{lem}\label{lem: ec of torus rank one strata}
	Let $\gamma(\tau)$ be a torus rank $1$ stratum of $\Xbar_{3,6}$. 
    
    \begin{enumerate}
        \item If $G(\tau)$ is trivial, then $e_c(\gamma(\tau)) \equiv -(L-1)^k \cdot S_\ell\langle 18\rangle$, where $k=6-\dim(\tau^\aff)$ is the rank of the torus bundle described in \cref{subsec:stratification of Xgs}.   
        \item If $G(\tau) \cong \Z/2$ and $\dim \tau^\aff = 0$, then $e_c(\gamma(\tau)) \equiv -(L^6+15L^4+15 L^2+1) \cdot S_\ell\langle 18 \rangle$. 
        \item If $G(\tau) \cong \Z/2$ and $\dim \tau^\aff = 1$, then $e_c(\gamma(\tau)) \equiv -(L^5+10L^3+5L) \cdot S_\ell\langle 18 \rangle$. 
    \end{enumerate}
    Let similarly $\gamma(\tau)$ be a torus rank $1$ stratum of $\Xbar_{3,7}$. 
    \begin{enumerate}
        \item If $G(\tau)$ is trivial, then $e_c(\gamma(\tau)) \equiv -(36+64L+36L^2) (L-1)^k \cdot S_\ell\langle 18\rangle$, where $k=7-\dim(\tau^\aff)$ is the rank of the torus bundle described in \cref{subsec:stratification of Xgs}.   
        \item If $G(\tau) \cong \Z/2$ and $\dim \tau^\aff = 0$, then $$e_c(\gamma(\tau)) \equiv -(29 L^9 + L^8 + 540 L^7 + 756 L^6 + 1134 L^5 + 1358 L^4 + 924 L^3 + 196 L^2 + 189 L - 7) \cdot S_\ell\langle 18 \rangle.$$ 
        \item If $G(\tau) \cong \Z/2$ and $\dim \tau^\aff = 1$, then 
        $$e_c(\gamma(\tau)) \equiv -(29 L^8 + 8 L^7 + 380 L^6 + 568 L^5 + 590 L^4 + 568 L^3 + 380 L^2 + 8 L + 29) \cdot S_\ell\langle 18 \rangle.$$ 
    \end{enumerate}
\end{lem}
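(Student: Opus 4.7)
The plan is to compute $e_c(\gamma(\tau)) = e_c([T(\tau)/G(\tau)])$ for each torus rank one stratum, using the fibration $T(\tau) \to X_{2,1+s}$, the Leray spectral sequence combined with \cref{lem:decomposition}, and Taibi's identification \cite[Theorem 8.3.1]{taibi_ecAn} of $e_c(A_2, \V_\mu)$ with signed sums of intersection cohomology Euler characteristics.

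First I would compute $e_c(X_{2,1+s})$ modulo polynomials in $L$ for $s = 6$ and $s = 7$. Applying Leray for $\pi^{1+s}$ and \cref{lem:decomposition} reduces this to enumerating dominant weights $\mu$ of $\Sp_4$ with $\mu_1 \leq 1+s$ such that $e_c(A_2, \V_\mu)$ has a non-Tate part. Using \cite[Theorem 8.3.1]{taibi_ecAn} and \cref{thm IH tables}, the only such summand in this range is a Tate twist of $S_\ell\langle 18\rangle$, arising from the parameter $\Delta_{17}[2]\oplus[1]$ (via $\mu = (7,7)$ on $A_2^\Sat$) and possibly via Eisenstein contributions of $(\Sym^2 \Delta_{17})[1]$ on $A_1^\Sat$. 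Counting multiplicities of $\V_{(7,7)}(-m)$ in the K\"unneth decomposition of $R^q\pi^{1+s}_*\Qell$ yields $e_c(X_{2,7}) \equiv -S_\ell\langle 18\rangle$ and $e_c(X_{2,8}) \equiv -(36 + 64L + 36L^2) S_\ell\langle 18\rangle$ modulo $L$-polynomials.

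When $G(\tau)$ is trivial, $\gamma(\tau) = T(\tau)$ is a Zariski locally trivial $\G_m^k$-bundle, so $e_c(\gamma(\tau)) = (L-1)^k e_c(X_{2,1+s})$, immediately yielding case (1). When $G(\tau) \cong \Z/2$ with involution $\iota$ corresponding to $(-1, t) \in \{\pm 1\} \ltimes \Z^s$, one uses
\[ e_c(\gamma(\tau)) = \tfrac{1}{2}\bigl((L-1)^k e_c(X_{2,1+s}) + (1+L)^k \, \mathrm{tr}(\iota^*, e_c(X_{2,1+s}))\bigr), \]
where the $(1+L)^k$ factor on the fiber reflects that $\iota$ inverts each $\G_m$ factor (with $[-1]^* = (-1)^q$ on $H^q(\G_m)$) while the $\Z^s$ translations act trivially on cohomology. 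The crucial remaining input is $\mathrm{tr}(\iota^*, e_c(X_{2,1+s}))$. I would identify the action of $-1 \in \{\pm 1\}$ on the base $X_{2,1+s}$ as $[-1]$ on only the first factor (parametrizing the extension class of the semi-abelian variety by $\G_m$), and trivial on the remaining $s$ factors (parametrizing the positions of the sections in the abelian part, unaffected by inverting the $\G_m$ subgroup); the $\Z^s$ again acts trivially on the base. Hence on each K\"unneth summand $\bigotimes_{k=1}^{1+s} R^{q_k}\pi_*\Qell$ of $R^q\pi^{1+s}_*\Qell$, the involution acts by $(-1)^{q_1}$. Enumerating the tuples $(q_1, \ldots, q_{1+s})$ carrying a $\V_{(7,7)}(-m)$ summand weighted by this sign gives $\mathrm{tr}(\iota^*, e_c(X_{2,7})) \equiv -S_\ell\langle 18\rangle$ and $\mathrm{tr}(\iota^*, e_c(X_{2,8})) \equiv -(22 + 36L + 22L^2) S_\ell\langle 18\rangle$. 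For instance, for $s=7$ and $m=0$, the tuples $(2^7, 0)$ and $(1^2, 2^6)$ together contribute $(7+1) + (21 - 7) = 22$, matching the constant term. Substituting into the formula then produces the claimed polynomials of cases (2) and (3).

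The main obstacle lies in the combinatorial enumeration just described: one must identify all K\"unneth tuples whose tensor product contains a $\V_{(7,7)}(-m)$ summand, compute the relevant $\Sp_4$-multiplicities (via Littlewood--Richardson-like rules for symplectic groups), and carry out the signed sum weighted by $(-1)^{q_1}$. The palindromic structure of the resulting polynomials reflects the symmetry of this combinatorics, but its verification is a direct, if tedious, case analysis.
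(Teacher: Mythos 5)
Your proposal follows essentially the same route as the paper: the torus bundle $T(\tau)\to X_{2,s+1}$, the Leray spectral sequence reducing to the multiplicities of $\V_{7,7}(-i)$ in $R^\bullet\pi^{s+1}_*\Qell$ (since $(7,7)$ is the only relevant weight with non-Tate cohomology on $A_2$ in the range $\mu_1\le 8$), and for $G(\tau)\cong\Z/2$ the splitting into $\pm$-eigenspaces on base and fiber, your formula $\tfrac12\bigl((L-1)^k e_c + (L+1)^k\,\mathrm{tr}(\iota^*)\bigr)$ being algebraically identical to the paper's $e_c^+e_c^+ + e_c^-e_c^-$ with the involution acting by $(-1)^i$ on $R^i\pi_*$ of a single tensor factor. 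Your intermediate values ($g_6=1$, $g_7=36+64L+36L^2$, traces $-1$ and $-(22+36L+22L^2)$) agree with the paper's $g_s^\pm$ and reproduce the stated polynomials.
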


\begin{proof}
		Consider first the case that $G(\tau)$ is trivial. Then $\gamma(\tau) = T(\tau)$ is a torus bundle $T(\tau)\stackrel p \rightarrow X_{2,s+1}$ of rank $k$. The local systems $R^qp_!\Qell$ are trivial, 
		because $p$ is a Zariski locally trivial fibration. We get a spectral sequence
		\begin{equation}
			\label{SS} E_2^{pq}=H^p_c(X_{2,s+1}) \otimes H^q_c(\mathbb G_m^k) \implies H^{p+q}_c(T(\tau))
		\end{equation}
		and so $e_c(T(\tau)) = (L-1)^k e_c(X_{2,s+1})$. 
		
		We then study $e_c(X_{2,s+1})$ through the Leray spectral sequence of the fibration $\pi^{s+1}:X_{2,s+1}\rightarrow A_2$. We know the cohomology of any symplectic local system $\V_\lambda$ on $A_2$ \cite{localsystemsA2}. These results show in particular that $\lambda  = (7,7)$ is the unique $\lambda$ for which $H^\bullet_c(A_2,\V_\lambda)$ has non-Tate cohomology and which minimizes $\lambda_1$: the representation $S_\ell\langle 18 \rangle$ appears in $H^3_c(A_2,\V_{7,7})$.  The second smallest value of $\lambda_1$ for which there is a local system with non-Tate cohomology is $\lambda_1=9$. Consequently, for $s \in \{6,7\}$ we have 
		\[
		e_c(X_{2,s+1})\equiv -g_s(L)S_{\ell}\langle 18\rangle
		\]
		for a polynomial $g_s \in \Z[L]$, depending on $s$; namely, the $i$th coefficient of $g$ is the multiplicity of $\V_{7,7}(-i)$ in $R^{14 + 2i}\pi^{s+1}_\ast\Q_\ell$, where $\pi^{s+1} : X_{2,s+1} \to A_2$. (Indeed, by what was said above, all $\V_\lambda$ except $\V_{7,7}$ contribute only Tate classes; since $\V_{7,7}(-i)$ has weight $14+2i$, it can occur nontrivially in $R^j\pi^{s+1}_\ast\Q_\ell$ only for $j=14+2i$.) A direct computation similar to that in Lemma \ref{lem:X36X37} shows that 
		\begin{equation} \label{computation of gs} g_6(L)= 1 \qquad \text{and} \qquad g_7(L) = 36+64L+36L^2.\end{equation}
	This gives the result for $G(\tau)$ trivial.

		We now consider the case that $G:=G(\tau)$ is nontrivial (hence cyclic of order $2$), and $\gamma(\tau) = [T(\tau)/G]$. If $X$ is a space with a $G$-action, let us write $e_c^+(X)$ resp.\ $e_c^-(X)$ for the Euler characteristic of the $(+1)$- resp.\ $(-1)$-eigenspaces of the $G$-action on the compact support cohomology. Equivalently,
		$$ e_c^+(X) = e_c([X/G]) \qquad \text{and} \qquad e_c^-(X) = e_c(X)-e_c^+(X).$$
		The group $G$ acts on the spectral sequence \eqref{SS}, and on the $E_2$-page it acts separately on both tensor factors. Since $e_c([T(\tau)/G])$ is the Euler characteristic of the $G$-invariants on the $E_2$-page, we find that 
		\[ e_c(\gamma(\tau)) = e_c([T(\tau)/G]) = e_c^+ (X_{2,s+1}) e_c^+(\mathbb G_m^k) + e_c^-(X_{2,s+1}) e_c^-(\mathbb G_m^k).\]
The group $G$ acts on $H^\bullet_c(\mathbb G_m^k)$ as the identity in even degrees, and by multiplication by $-1$ in odd degrees, which makes it easy to read off $e_c^+(\mathbb G_m^k)$ and $e_c^-(\mathbb G_m^k)$. In studying the $G$-action on $H^\bullet_c(X_{2,s+1})$ we again use the Leray spectral sequence of the fibration over $A_2$, since $G$ acts fiberwise. The group $G$ will act nontrivially on one of the tensor factors of $R\pi^{s+1}_\ast\Q_\ell \cong (R\pi^1_*\Qell)^{\otimes s+1}$; on this tensor factor, it will act on $R^i \pi_\ast\Q_\ell$ trivially for even $i$ and by $-1$ for odd $i$. We can then write 
$$ g_s(L) = g_s^+(L) + g_s^-(L)$$
with the $i$th coefficient of $g_s^+(L)$ (resp.\ $g_s^-(L)$) recording the multiplicity of $\V_{7,7}(-i)$ in the $(+1)$-eigenspace (resp.\ $(-1)$-eigenspace) of $R^{14+2i} \pi_*^{s+1} \Q_\ell$. In the same way that one derives \eqref{computation of gs} one finds that 
\[ g_6^+(L)= 1, \qquad g_6^-(L)=0,\qquad g_7^+(L) = 29+50L+29L^2, \qquad g_7^-(L)=7+14L+7L^2.\]
Then $e_c^\pm(X_{2,s+1})\equiv -g_s^\pm(L)S_{\ell}\langle 18\rangle$, and the result follows by putting the above together.
\end{proof}

\subsection{Non-cancellation}

\begin{lem}
    Let $s \in \{6,7\}$. Then $H^\bullet(\Xbar_{3,s})$ is not of Tate type.
\end{lem}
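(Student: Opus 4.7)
The plan is to compute $e(\Xbar_{3,s})$ in the Grothendieck group of $\ell$-adic Galois representations via the stratification of \cref{sec:stratacoh}, show that its non-Tate part is a nonzero polynomial multiple of $S_\ell\langle 18\rangle$, and conclude. First I would check, using \cref{prop:allsubquotients} and inspection of Tables \ref{tab:rest}, \ref{tab:C}, \ref{tab:DEF}, that for $g=3$ and $s\in\{6,7\}$ the only non-Tate Galois representation which can appear in any stratum is $S_\ell\langle 18\rangle$ up to Tate twist (it comes from the single parameter $\Delta_{17}[2]\oplus[1]$ on $\IH^\bullet(A_2^\Sat,\V_{(7,7)})$), and that strata of torus rank $r\geq 2$ contribute only Tate classes, because the constraint $\lambda_1 \leq s+r$ with $g-r\leq 1$ rules out the only non-Tate entries (at $\lambda_1 = 10$ or above) of the tables for $g\leq 1$. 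Thus $e(\Xbar_{3,s}) \equiv Q_s(L) \cdot S_\ell\langle 18\rangle$ for some $Q_s\in\Z[L]$, and it suffices to prove $Q_s\not\equiv 0$.

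Next I would combine \cref{lem:X36X37} and \cref{lem: ec of torus rank one strata} to write
\[ Q_s(L) = P_s(L) - g_s(L)\sum_{d=0}^s N_d^{\mathrm{triv}}(L-1)^{s-d} - N_0^G h_{s,0}(L) - N_1^G h_{s,1}(L), \]
where $N_d^{\mathrm{triv}}$ counts orbits of rank-$1$ cones with trivial stabilizer and $\dim\tau^\aff=d$, and $N_d^G$ the analogous count with stabilizer $\Z/2$, and where $P_s, g_s, h_{s,0}, h_{s,1}$ are the polynomials extracted from those two lemmas. By \cref{prop:number of torus rank one strata} we have $N_0^G + N_1^G = 2^s$ and $N_d^G = 0$ for $d\geq 2$; the numbers $N_d^{\mathrm{triv}}$ depend on the choice of admissible decomposition and are not known a priori.

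To circumvent this dependence I would specialize at $L=1$. All the factors $(L-1)^{s-d}$ then vanish except for $d=s$, leaving
\[ Q_s(1) = P_s(1) - g_s(1)\, N_s^{\mathrm{triv}} - N_0^G\, h_{s,0}(1) - N_1^G\, h_{s,1}(1). \]
All quantities on the right are non-negative, so
\[ Q_s(1) \;\leq\; P_s(1) - 2^s \cdot \min\bigl(h_{s,0}(1), h_{s,1}(1)\bigr). \]
A short direct computation gives $P_6(1)=924$ with $\min(h_{6,0}(1), h_{6,1}(1)) = 16$, hence $Q_6(1)\leq 924 - 64\cdot 16 = -100$; and $P_7(1)=283712$ with $\min(h_{7,0}(1), h_{7,1}(1)) = 2560$, hence $Q_7(1)\leq 283712 - 128\cdot 2560 = -43968$. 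In both cases $Q_s(1) < 0$, so $Q_s \not\equiv 0$ and $H^\bullet(\Xbar_{3,s})$ fails to be of Tate type.

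The main obstacle is making the non-cancellation argument uniform over all simplicial admissible decompositions $\widetilde{\Sigma}$: a priori, the interior contribution $P_s(L)$ and the boundary contributions $h_{s,d}(L)$ could have cancelled for some clever choice of triangulation. The evaluation at $L=1$ trick works precisely because the combinatorial identity $N_0^G+N_1^G = 2^s$ forces the total boundary contribution at $L=1$ to be at least $2^s\cdot\min(h_{s,0}(1), h_{s,1}(1))$, which in our two regimes already exceeds $P_s(1)$; the remaining negative terms $g_s(1) N_s^{\mathrm{triv}}$ can only make things worse.
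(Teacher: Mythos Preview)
Your argument is correct and takes a genuinely different route from the paper's. Both proofs rely on the same inputs --- \cref{lem:X36X37}, \cref{lem: ec of torus rank one strata}, and \cref{prop:number of torus rank one strata} --- to reduce to a combinatorial non-cancellation statement involving the unknown numbers of rank-$1$ strata of each type. The paper argues by tracking the top-degree coefficients (of $L^6$ and $L^5$ for $s=6$, of $L^9$ for $s=7$) and showing that the $2^s$ strata with $\Z/2$ stabilizer force one of these coefficients to be strictly negative. Your approach instead specializes at $L=1$: this kills every trivial-stabilizer contribution $-(L-1)^{s-d}g_s(L)$ with $d<s$ outright, so the only unknown that survives is $N_s^{\mathrm{triv}}$, and it enters with a nonpositive sign. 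The inequality $Q_s(1) \leq P_s(1) - 2^s\min(h_{s,0}(1),h_{s,1}(1))$ then follows immediately from $N_0^G+N_1^G=2^s$, and your numerics ($924-1024=-100$ and $283712-327680=-43968$) check out.

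What your approach buys is uniformity: you never need to worry about the signs of individual coefficients of $(L-1)^k$ (note that $-(L-1)^6$ has a \emph{positive} $L^5$-coefficient, so the paper's sentence ``the coefficient is always nonpositive'' requires some care to interpret). The paper's approach, by contrast, is more localized in weight and makes the mechanism of non-cancellation visible degree-by-degree. One small point: your appeal to \cref{prop:allsubquotients} for the rank-$\geq 2$ strata is slightly loose, since that proposition concerns $H^\bullet(\Xbar_{g,s})$ globally rather than $e_c$ of individual strata; the correct justification (which you essentially give) is that these strata are torus-bundle quotients over $X_{g-r,s+r}$ with $g-r\leq 1$ and $\lambda_1\leq s+r\leq 9$, so $e_c(A_1,\V_\lambda)$ is Tate for all relevant $\lambda$.
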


\begin{proof}We compute $e_c(\Xbar_{3,s})$ by adding up $e_c(\gamma(\tau))$ for every cone $\tau$. 

Consider first the case $s=6$. By \cref{lem:X36X37}, the contribution from the interior is $$e_c(X_{3,6}) = L^6 S_\ell\langle 18\rangle + 21 L^5 S_\ell\langle 18\rangle  + \dots $$ If $\tau$ is of rank $> 1$, then $e_c(\gamma(\tau))$ is a polynomial in $L$. We consider the coefficients of $L^6 S_\ell\langle 18\rangle$ and $L^5 S_\ell\langle 18\rangle$ in $e_c(\gamma(\tau))$ for $\tau$ of rank $1$. By \cref{lem: ec of torus rank one strata}, the coefficient is always nonpositive. By \cref{prop:number of torus rank one strata}, there are exactly $64$ cones $\tau$ with stabilizer group $G(\tau)\cong \Z/2$, and for each of them $e_c(\gamma(\tau))$ contains either a term $-L^6 S_\ell\langle 18\rangle$ or $-L^5 S_\ell\langle 18\rangle$ (depending on $\dim \tau$). Either way, adding up all of them we see that either the coefficient of $L^6 S_\ell\langle 18\rangle$ or $L^5 S_\ell\langle 18\rangle$ in $e_c(\Xbar_{3,6})$ is strictly negative. 

Now consider $s=7$. The contribution from the interior is $e_c(X_{3,7})=28 L^9 S_\ell\langle 18\rangle + \dots$ and again $e_c(\gamma(\tau))$ is a polynomial in $L$ if $\tau$ has rank $>1$. When $\tau$ has rank $1$, then by \cref{lem: ec of torus rank one strata}, the coefficient of $L^9 S_\ell\langle 18\rangle$ in $e_c(\gamma(\tau))$ is nonzero precisely for $\dim \tau^\aff =0$, in which case the coefficient is either $-36$ or $-29$. Either way, we see that the coefficient of $L^9 S_\ell\langle 18\rangle$ in $e_c(\Xbar_{3,7})$ must be strictly negative.
\end{proof}

This finishes the proof of \cref{thm:tateornot}.

\begin{rem}We point out a nontrivial consistency check of our computations. When computing $e_c(\Xbar_{3,s})$ for $s=6,7$ we found that the odd weight terms were all of the form $L^i S_\ell\langle 18\rangle$ for $i \leq 6$, resp.\ $i \leq 9$. In particular, it follows that $H^{31}(\Xbar_{3,6})=0$ and $H^{37}(\Xbar_{3,7})=0$. By Poincar\'e duality, this implies that $H^{17}(\Xbar_{3,6})=H^{17}(\Xbar_{3,7})=0$. Hence in both cases, the coefficient of $S_\ell\langle 18\rangle$ must cancel when adding up $e_c(\gamma(\tau))$ over all cones $\tau$. Let us directly verify this fact.

Consider first the case $s=6$. The coefficient of $S_\ell\langle 18\rangle$ in $e_c(\gamma(\tau))$ is $32$ when $\gamma(\tau)=X_{3,6}$ (\cref{lem:X36X37}), it is zero when $\tau$ has rank $>1$, and it is $(-1)^d$ if $\tau$ has rank $1$ and dimension $d$, except if $\dim \tau = 2$ and $G(\tau) \cong \Z/2$, in which case the coefficient is zero (\cref{lem: ec of torus rank one strata}).

Let $a_i$ (resp.~$b_i$) denote the number of rank $1$ cones of dimension $i+1$ whose stabilizer is trivial, resp.~cyclic of order $2$. Recall that $a_i=0$ for $i>1$. We obtain the equation
\begin{equation}\label{EC 1}
	a_0 + \sum_i (-1)^i b_i = 32. 
\end{equation} 
We can also directly prove \eqref{EC 1} as follows.
We obtain a cell decomposition of $T=(\R/\Z)^6$ with $a_i+2b_i$ cells in dimension $i$, and so we have $0=e(T)=\sum_i (-1)^i (a_i+2b_i)$.
Taking the half-sum of this equation with $a_0+a_1=2^s=64$ (Proposition \ref{prop:number of torus rank one strata}) recovers \eqref{EC 1}.

We now consider the case $s=7$. The coefficient of $S_\ell\langle 18\rangle$ in $e_c(\gamma(\tau))$ is $1408 = 11\cdot 128$ when $\gamma(\tau)=X_{3,7}$, and it is zero when $\tau$ has rank $>1$. It is $36(-1)^{d+1}$ if $\tau$ has rank $1$, dimension $d$, and trivial stabilizer. If $G(\tau)\cong \Z/2$ then the coefficient is $7$ or $-29$, when $\dim \tau$ is $1$ or $2$. Keeping notation as above, we find the equation
\begin{equation}\label{EC 2}
	-7 a_0 + 29 a_1 - 36 \sum_i (-1)^i b_i = 11 \cdot 128.
\end{equation}
Again this can be recovered as $-18$ times the equation $\sum_i (-1)^i (a_i+2b_i)=0$ plus $11$ times the equation $a_0+a_1=128$.

We also note that the vanishing of the $S_{\ell}\langle 18\rangle$-isotypic part of $H^{17}(\Xbar_{3,6})$ and $H^{17}(\Xbar_{3,7})$ follows from the proof of Theorem \ref{thm:holomorphic} in Section \ref{sec:holforms}.
\end{rem}

\section{Hodge numbers}\label{sec:Hodge}

\begin{proof}[Proof of Theorem \ref{thm:hodge}]
The result follows immediately from the Hodge--Tate comparison isomorphism \cite{faltings_padichodge} and Corollary \ref{cor:purepart} if we can find a prime $\ell$ (and $\iota:\C\simeq \Qellbar$) such that all Galois representations associated to automorphic representations in Theorem \ref{thm IH} are absolutely irreducible (conjecturally, they are always irreducible).
For symmetric powers of $2$-dimensional representations associated to level one (or more generally, non-CM of weight $\geq 2$) eigenforms, this is known to hold for all $\ell$ and $\iota$.
For the $4$-dimensional Galois representations associated to $\Delta_{w_1,w_2}$ ($23 \geq w_1>w_2>0$ odd) this is known for all but finitely many $\ell$ as a special case of \cite[Theorem B]{Ramakrishnan_irreducibility}.
\end{proof}

\subsection{Holomorphic forms}\label{sec:holforms}
Theorem \ref{thm:hodge} gives general constraints on the Hodge structure of $H^{k}(\Xbar_{g,s})$. Here, we give further strong constraints on the holomorphic (and thus antiholomorphic, by Hodge symmetry) part $H^{k,0}(\Xbar_{g,s})$, proving Theorem \ref{thm:holomorphic}. We also use holomorphic forms to prove Theorem \ref{thm:interiornontate}.

\begin{proof}[Proof of Theorem \ref{thm:holomorphic}]
We apply Proposition \ref{prop:allsubquotients} to reduce to the study of the groups $\IH^p(A_{g-r}^{\Sat}, \V_{\lambda}(-m))$.
Only the groups with $m=0$ can contribute nontrivial holomorphic forms.\footnote{We are using that $\IH^\bullet(A_g^\Sat, \V_\lambda)$ is effective, i.e.\ $h^{p,q}>0$ only if $p,q \geq 0$. Indeed, one can show it is effective by embedding it in the cohomology of a nonsingular compactification $\Xbar_{g,s}$.} 
We have $m\geq \binom{r+1}{2}+rs$, and hence $m\geq 1$ if $r\neq 0$. Therefore, we only need to consider $\IH^p(A_{g}^{\Sat}, \V_{\lambda})$ with $p+|\lambda|=k$ and $\lambda_1\leq s$. The cases where the space of holomorphic forms vanishes then follow by examining Tables \ref{tab:C} and \ref{tab:DEF}. For the non-vanishing cases, we argue as in the proof of Theorem \ref{thm:tateornot}: there is an injection of Galois representations  \[\IH^{\bullet-q}(A^{\Sat}_g, R^q \pi_* \Qell) \hookrightarrow H^\bullet(\Xbar_{g,s},\Qell).\]
When $g=3$ and $s\geq 8$ or $4\leq g \leq 7$ and $s\geq c(g)$, we find the Galois representation $\Sym^2 S_{\ell}\langle 11 \rangle$, coming from the first five rows of Table \ref{tab:DEF}. The associated Hodge structure $\mathrm{Hdg}(\Sym^2 S_{\ell}\langle 12 \rangle)$ has a nonzero space of holomorphic $22$-forms. The result follows by the Hodge--Tate comparison.\end{proof}

\begin{rem}\label{rem:g12}\cref{thm:holomorphic} only treats the case $g \geq 3$. The cases $g\in \{1,2\}$ work out somewhat differently. In these cases, explicit formulas for $\dim H^{k,0}(\Xbar_{g,s})$, for all values of $k$, can be extracted from known results in the literature. In genus one, $\Xbar_{1,s}$ is birational to $\Mbar_{1,s-1}$, the moduli space of stable curves of genus $1$ with $s-1$ markings. Because the spaces of holomorphic forms are birational invariants for nonsingular spaces, the formula in \cite[Proposition 3.1]{CLPWFA} gives the answer.

In genus two, the Abel--Jacobi map 
    \[
    M_{2,2s}/\mathfrak{S}_2^s\rightarrow X_{2,s} 
    \]
    given by $(C,p_1,\dots,p_{2s})\mapsto(\mathrm{Jac}(C),\omega_C^*(p_1+p_2),\dots, \omega_C^*(p_{2s-1}+p_{2s}))$ is birational, so any toroidal compactification $\Xbar_{2,s}$ is birational to $\Mbar_{2,2s}/\mathfrak{S}_2^s$. Therefore, it suffices to compute $H^{k,0}(\Mbar_{2,2s})^{\mathfrak{S}_2^s}$. Now \cite[Proposition 3.5]{CLPWFA} computes $H^{k,0}(\Mbar_{2,2s})$ as a representation of the symmetric group $\mathfrak S_{2s}$ in terms of the $(k,0)$ part of $W_k H^{3}(A_2,\V_{\lambda})$ for $|\lambda|=k-3$; the latter are expressed in terms of spaces of vector-valued Siegel modular forms in \cite{localsystemsA2} (or, for that matter, by the results of this paper). This allows one to write down a not-too-compact formula for the answer.
\end{rem}

\begin{proof}[Proof of Theorem \ref{thm:interiornontate}]
    Let $\Xbar_{g,s}$ be any nonsingular toroidal compactification of $X_{g,s}$. We have a short exact sequence
    \[
    H^{k-2}(\tilde{\partial X}_{g,s},\C)\rightarrow H^k(\Xbar_{g,s},\C)\rightarrow W_kH^k(X_{g,s},\C)\rightarrow 0,
    \]
    where $\tilde{\partial X}_{g,s}$ is the normalization of the boundary of $X_g$ in $\Xbar_g$. The first morphism is of Hodge type $(1,1)$. Hence, the second map restricted to the space of holomorphic $k$-forms $H^{k,0}(\Xbar_g)\rightarrow F^k W_k H^k(X_{g,s},\C)$ is an isomorphism. Therefore, if $H^{k,0}(\Xbar_g)\neq 0$ for some $k>0$, then $H^\bullet(X_{g,s})$ is not Tate. 
    For $g=1,2$ and $s\geq c(g)$, it is well-known that $\Xbar_{g,s}$ admits non-trivial holomorphic forms (see Remark \ref{rem:g12}).  When $g=3$, we use Lemma \ref{lem:X36X37}, which shows that $H^\bullet(X_{3,6})$ is not Tate. Because $X_{3,s}\rightarrow X_{3,6}$ is proper for $s\geq 6$, it follows that $H^{\bullet}(X_{3,s})$ is not Tate for any $s\geq 6$.
    For $4\leq g\leq 7$ and $s\geq c(g)$, there exist holomorphic $22$-forms by Theorem \ref{thm:holomorphic}.

    For $g\geq 9$, we argue as in the proof of Theorem \ref{thm:tateornot} and use the Hodge--Tate comparison. The parameters $\Delta^{(j)}_{2g-3}[4]\oplus [2g-7]$ contribute the Galois representations $\bigoplus_j\Sym^2 \rho_{\Delta^{(j)}_{2g-3}}$. The associated Hodge structures $\mathrm{Hdg}(\Sym^2 \rho_{\Delta^{(j)}_{2g-3}})$ contribute holomorphic $4g-6$ forms. By the decomposition theorem and the comparison isomorphism, it follows that $H^{4g-6,0}(\Abar_g)\neq 0$ for $g=7$ or $g\geq 9$. 

    For $g=8$, the above argument fails because there are no cusp forms of weight $14$ for $\SL_2(\Z)$, and among the parameters $\psi$ contributing to $\IH^\bullet(A_8)$, only $\Delta_{11}[6] \oplus [5]$ has non-Tate contribution which occurs with a negative Tate twist so $\IH^\bullet(A_8)$ only has positive Hodge-Tate weights.
    Instead, we check that the Euler characteristic $e_c(A_8)$ is not Tate as in \cite[\S 9.2]{taibi_ecAn}.
    We have an explicit formula of the form
    \[ e_c(A_8) = \sum_{g', \lambda', m'} n(\lambda',m') e_\IH(A_{g'}^\Sat, \V_{\lambda'}(-m')) \]
    where $g' \leq g$, $\lambda'+g' \leq 8$ and $m' \geq 0$ and $n(\lambda',m') \in \Z$.
    The parameters contributing to $\IH^\bullet(A_{g'}^\Sat, \V_{\lambda'})$ are of the form $\psi = \oplus_i \psi_i$ with
    \[ \psi_i \in \{ [2d+1], \Delta_{11}[2], \Delta_{15}[2], \Delta_{11}[4], \Delta_{11}[6] \} \]
    and only those containing $\Delta_{11}[4]$ or $\Delta_{11}[6]$ have non-Tate contributions.
    These are either of the form $\Delta_{11}[4] \oplus [2d+1]$ or $\Delta_{11}[6] \oplus [2d+1]$, by regularity.
    The latter do not contribute non-Tate classes in weight less than $28$, while the former contribute $\Sym^2 S_\ell \langle 12 \rangle$ to $\IH^{22}(A_{d+4}^\Sat, \V_{\lambda'})$ modulo Tate classes (see \ref{it:typeE}).
    Considering only weight $22$ to simplify the computation, so that we need only consider terms corresponding to $m'=0$ and $\lambda'=(7-g',7-g',7-g',7-g',0,\dots,0)$ in the above sum, we find that the weight $22$ part of $e_c(A_8)$ is equal to $-\Sym^2 S_\ell \langle 12 \rangle$ (modulo Tate classes).

    Now we prove the converse. We have that $\Xbar_{g,s}$ is a normal crossings compactification of $X_{g,s}$ with boundary strata corresponding to cones $\tau$. We consider the weight spectral sequence \cite[Example 3.5]{spectralsequencestratification} computing the compactly supported cohomology $H_c^{\bullet}(X_{g,s})$. The entries on the first page of the spectral sequence are the cohomology groups of the normalizations of the closures of the locally closed strata $\gamma(\tau)$ described in Section \ref{sec:compact}. We will prove that all of the entries on the first page of the spectral sequence are Tate. 
    
    The normalizations of the strata closures are smooth and proper, and each is stratified by $\gamma(\tau')$, where $\tau'$ is a face of $\tau$. Using the spectral sequence \eqref{BMSS}, it suffices to show that $\gr^W_i H^i_c(\gamma(\tau))$ is Tate for all $i$ and all $\tau$ in the cone decomposition defining $\Xbar_{g,s}$. By Lemma \ref{stratification lemma 2}, $\gr^W_i H^i_c(\gamma(\tau))$ is a direct sum of subquotients of Galois representations of the form $\gr^W_{i} H_c^p(A_{g-r},\V_{\lambda}(-m))$, where $(\lambda,m)$ ranges over dominant weights of $\mathrm{GSp}_{2(g-r)}$ satisfying $p+|\lambda|+2m=i$, $m\geq \binom{r+1}{2}+rs-\dim(\tau)$, and $\lambda_1 \leq s+r$, where $r=\mathrm{rank}(\tau)$. Each $\gr^W_{i} H_c^p(A_{g-r},\V_{\lambda}(-m))$ embeds in the corresponding intersection cohomology group $\IH^p(A^{\Sat}_{g-r},\V_{\lambda}(-m))$. First, if $i\leq 21=\dim A_6$, then we see that the contributions are all Tate by examining Tables \ref{tab:rest}, \ref{tab:C}, and \ref{tab:DEF}, using the bound $0\leq r \leq 6$. If $i>21$, we apply Poincar\'e duality to the intersection cohomology groups $\IH^p(A_{g-r}^{\Sat},\V_{\lambda}(-m))$ to reduce to the case $i\leq 21$.\end{proof}

\section{Relative Lie algebra cohomology and \texorpdfstring{$L^2$}{L2}-cohomology}\label{sec:L2}

In the proofs of our main results we have used that $\IH^\bullet(A_g^\Sat,\V_\lambda)$ decomposes into summands indexed by Arthur--Langlands parameters. This allows us to calculate $\IH^\bullet(A_g^\Sat,\V_\lambda)$ (an algebro-geometric object) in terms of data involving automorphic representations. There is however also a more direct connection between automorphic representations and $\IH^\bullet(A_g^\Sat,\V_\lambda)$, via $(\mathfrak g,K)$-cohomology and the \emph{Matsushima--Murakami formula}. It seems plausible that some version of our main results could be derived using this connection, too, but we have not attempted to carry this out. Moreover, an approach using $(\mathfrak g,K)$-cohomology would only see the ``shadow'' of our results involving Hodge numbers and real Hodge structures, as opposed to our results in $\ell$-adic cohomology, where we explicitly identify the Galois representations in cohomology. In any case, let us briefly expound upon this connection.

Let us temporarily denote by $\V_\lambda$ the real polarized variation of Hodge structure on $A_g$ associated to the irreducible representation $V_\lambda$ of $\Sp_{2g}$. Inside the real de Rham complex of $\V_\lambda$, one may consider the subcomplex consisting of differential forms $\omega$ such that $\omega$ and $d\omega$ are both square integrable. Its cohomology is by definition the $L^2$-cohomology of $A_g$, and we denote it $H^\bullet_{(2)}(A_g,\V_\lambda)$. 
The \emph{Zucker conjecture}, proven independently by Looijenga \cite{looijengazucker} and Saper--Stern \cite{saperstern}, expresses the transcendentally defined $L^2$-cohomology groups in terms of intersection cohomology:
\[ H^\bullet_{(2)}(A_g,\V_\lambda) \cong \IH^\bullet(A_g^\Sat,\V_\lambda).\]
This holds more generally for any Shimura variety. Both sides of the isomorphism are naturally equipped with real Hodge structures, using the representation of $L^2$-cohomology by harmonic forms on the left-hand side, and Saito's theory of mixed Hodge modules on the right-hand side, and it is a recent result of Looijenga \cite{Looijenga2025} that it is an isomorphism of real Hodge structures.

Recall from \cite{borel-regularization} and \cite{borelcasselman} that the $L^2$-cohomology groups $H^\bullet_{(2)}(A_g, \V_\lambda)$ may be computed from the level one discrete automorphic spectrum of $G=\GSp_{2g}$.
For simplicity we complexify $\V_\lambda$ in the following discussion, i.e.\ we assume that $V_\lambda$ is an irreducible algebraic representation of $\Sp_{2g,\C}$, extended as usual to a representation of $G_\C$.
This does not incur a loss of information as the isomorphism class of a representation of $\mathbb{S} := \Res_{\C/\R} \GL_{1,\C}$ (i.e.\ a real Hodge structure) is determined by its base change to $\C$.
Denote \(G = \GSp_{2g}\), specifically \(G(A) = \{ (x,c) \in \GL_{2g}(A) \times A^\times \,|\, {}^t x J_g x = c J_g\}\) where
\[ J_g = \begin{pmatrix} 0 & I_g \\ -I_g & 0 \end{pmatrix}. \]
Realize \(A_g\) (over \(\C\)) as the Shimura variety in level \(G(\Zhat)\) associated to the Shimura datum \((G,X)\) where \(X\) is the \(G(\R)\)-orbit of
\begin{align*}
  h : \mathbb{S} & \longrightarrow G_{\R} \\
  a + i b & \longmapsto \begin{pmatrix} aI_g & -bI_g \\ bI_g & aI_g \end{pmatrix}.
\end{align*}
Let \(K \simeq \R_{>0} \times U(g)\) be the centralizer of \(h\) in \(G(\R)\).
The morphism \(h\) may be uniquely written on \(\mathbb{S}(\R) = \C^\times\) as \(z \mapsto \mu_h(z) \mu_h'(\ol{z})\) where \(\mu_h, \mu_h': \GL_{1,\C} \to G_\C\).
Let $\mathcal{A}^2(G, \omega_\lambda^{-1})$ be the space of automorphic forms $G(\Q) \backslash G(\A) \to \C$ which transform under $\R_{>0} \subset G(\R)$ by the inverse of the central character $\omega_\lambda$ of $V_\lambda$ and which after a suitable twist are square-integrable on $G(\Q) \backslash G(\A) / \R_{>0}$.
We have an identification
\[ H^\bullet_{(2)}(A_g, \V_\lambda) \simeq H^\bullet(\gfrak, K; \mathcal{A}^2(G, \omega_\lambda^{-1})^{G(\Zhat)} \otimes V_\lambda) \]
where the right-hand side is defined as the cohomology of the Chevalley-Eilenberg complex (differentials omitted)
\begin{equation} \label{eq:def_g_K_coh}
  \Hom_K(\wedge^\bullet \gfrak/\kfrak, \mathcal{A}^2(G, \omega_\lambda^{-1})^{G(\Zhat)} \otimes V_\lambda).
\end{equation}
Let us recall the complex Hodge structure on these cohomology groups, as a representation of \(\mathbb{S}_{\C}\) (according to the convention in \cite[(1.1.1.1)]{deligne-shimuravarieties}).
Let \(\pfrak^{\pm} \subset \gfrak\) be the eigenspace on which \(\operatorname{Ad} \mu_h(z)\) acts by \(z^{\pm 1}\), so that we have a decomposition \(\gfrak = \kfrak \oplus \pfrak\) with \(\pfrak = \pfrak^+ \oplus \pfrak^-\), in particular
\[ \wedge^\bullet \gfrak/\kfrak \simeq \wedge^\bullet \pfrak^+ \oplus \wedge^\bullet \pfrak^-. \]
We let \(z \in \mathbb{S}(\R)\) act by \(z\) on \(\gfrak^+\) and by \(\ol{z}\) on \(\gfrak^-\) (note that this is not the adjoint action composed with \(h\)).
We endow \(V_\lambda\) with the action of \(\mathbb{S}_\C\) obtained by composing the action of \(G_\C\) with \(h_\C\).
Finally we endow \(\mathcal{A}^2(G, \omega_\lambda^{-1})\) with the trivial action of \(\mathbb{S}_\C\), giving us an action of \(\mathbb{S}_\C\) on \eqref{eq:def_g_K_coh}.

In general making this description more precise would require detailed knowledge of (part of) the discrete automorphic spectrum of $G$, but fortunately in level one the restriction map
\[ \mathcal{A}^2(G, \omega_\lambda^{-1})^{G(\Zhat)} \longrightarrow \mathcal{A}^2(\Sp_{2g})^{\Sp_{2g}(\Zhat)} \]
is bijective, and the above $(\gfrak,K)$-cohomology groups are obviously just $(\mathfrak{sp}_{2g},U(g))$-cohomology groups.
Relying on Arthur's endoscopic classification \cite{arthurclassification} for $\Sp_{2g,\Q}$, the identification of cohomological Arthur-Langlands packets for $\Sp_{2g}(\R)$ with the (more concrete) Adams-Johnson packets \cite{AMR} and the explicit computation of $(\mathfrak{sp}_{2g},U(g))$-cohomology of irreducible unitary $(\mathfrak{sp}_{2g},U(g))$-modules \cite[Proposition 6.19]{vz}, one can in principle explicitly compute $H^\bullet_{(2)}(A_g, \V_\lambda)$ explicitly as a representation of $\mathbb{S}_\C$ (even adding the commuting action of the Hecke algebra $\mathcal{H}(\Sp_{2g}(\A_f), \Sp_{2g}(\Zhat))$).
But Arthur gave in \cite[\S 9]{arthur_unip} (relying on \cite{vz}) a more conceptual description of the $L^2$-cohomology of a Shimura variety, conditional on the endoscopic classification of the discrete automorphic spectrum of the group underlying the Shimura datum.
While this classification is not known in full generality for $\GSp_{2g}$, as we saw above in level one we can reduce the matter to the endoscopic classification for $\Sp_{2g}$.
Simply paraphrasing \cite[Proposition 9.1]{arthur_unip} (using Deligne's sign convention) yields Theorem \ref{thm:Hodge_g_K_A_g} below, for which we need to recall a few notions.

As usual denote \(\tau = \lambda + \rho\), and consider \(\psi = \psi_0 \oplus \dots \oplus \psi_r \in \tPsidut(\Sp_{2g})\).
For each \(0 \leq i \leq r\) we have a local Arthur-Langlands parameter \(\psi_{i,\infty}: W_\R \times \SL_2(\C) \to \mathcal{M}_{\psi_i}(\C) \simeq \SO_{n_i d_i}(\C)\) such that composing with the standard representation of \(\mathcal{M}_{\psi_i}\) yields \(\operatorname{LL}(\pi_{i,\infty}) \otimes \Sym^{d_i-1} \Std_{\SL_2}\) where \(\operatorname{LL}(\pi_{i,\infty}): W_\R \to \GL_{n_i}(\C)\) is the Langlands parameter of \(\pi_{i,\infty}\).
To be concrete the Langlands parameter \(\operatorname{LL}(\pi_{i,\infty})\) is characterized as follows.
\begin{itemize}
\item For \(i>0\) it is the unique representation of \(W_\R\) whose restriction to \(\C^\times\) is equivalent to
  \[ \bigoplus_{j=1}^{n_i/2} z \longmapsto \operatorname{diag}((z/|z|)^{2w_j(\pi_i)}, (z/|z|)^{-2w_j(\pi_i)}). \]
  \item For \(i=0\) it is the unique representation of \(W_\R\) with trivial determinant whose restriction to \(\C^\times\) is equivalent to
  \[ 1 \oplus \bigoplus_{j=1}^{(n_0-1)/2} z \longmapsto \operatorname{diag}((z/|z|)^{2w_j(\pi_i)}, (z/|z|)^{-2w_j(\pi_i)}). \]
\end{itemize}
Let \(\mathcal{M}_{\psi_i,\mathrm{sc}}\) be the simply connected cover of \(\mathcal{M}_{\psi_i}\) and let \(\mathcal{GM}_{\psi_i,\mathrm{sc}} = (\GL_1 \times \mathcal{M}_{\psi_i,\mathrm{sc}}) / \mu_2\) where the subgroup \(\mu_2\) is diagonally embedded (in the second factor its image is \(\ker \mathcal{M}_{\psi_i,\mathrm{sc}} \to \mathcal{M}_{\psi_i}\)).
Note that \(\mathcal{GM}_{\psi_i,\mathrm{sc}}\) is isomorphic to \(\GSpin_{n_i d_i}\).
We have lifts \(\tilde{\psi}_{i,\infty}: W_\R \times \SL_2(\C) \to \mathcal{GM}_{\psi_i,\mathrm{sc}}(\C)\) such that the composition with \(\GSpin_{n_i d_i}(\C) \to \C^\times\), $(t,g) \mapsto t^2$ is given on \(\C^\times \subset W_\R\) by \(z \mapsto (z \ol{z})^{|\tau_i|}\) (up to a twist this is \cite[Proposition-Definition 3.4.4]{taibi_ecAn}).
These lifts may not be unique but their restrictions to \(\C^\times \times \SL_2(\C)\) are unique.
Let \(\iota_\R: \C^\times \to W_\R \times \SL_2(\C)\), \(z \mapsto (z, \operatorname{diag}((z \ol{z})^{1/2}, (z \ol{z})^{-1/2}))\).
Let \(\spin_{\psi_0}\) be the spin representation of \(\mathcal{GM}_{\psi_0,\mathrm{sc}}\), and recall from \cite[Definition 4.7.4]{taibi_ecAn} the half-spin representations \(\spin_{\psi_i}^\pm\) of \(\mathcal{GM}_{\psi_i,\mathrm{sc}}\) for \(i>0\).
To be precise for $i>0$ the half-spin representation $\spin_{\psi_i}^+$ is characterized among $\spin_{\psi_i}^\pm$ by the property that the representation $\spin_{\psi_i}^+ \circ \tilde{\psi}_{i,\infty} \circ \iota_\R$ contains a character $z \mapsto z^a \ol{z}^b$ with $a = |\tau_i|$.
The signs $(u_i(\psi))_{1 \leq i \leq r}$ were recalled before Theorem \ref{thm IH tables}.

\begin{thm} \label{thm:Hodge_g_K_A_g}
  The representation \eqref{eq:def_g_K_coh} of \(\mathbb{S}(\R)\) decomposes as \(\bigoplus_\psi H_\psi\), ranging over \(\psi \in \tPsidut(\Sp_{2g})\), with \(H_\psi\) isomorphic to the composition with \(\iota_\R\) of the \emph{dual} of the representation
  \[ \spin_{\psi_0} \circ \tilde{\psi}_{0,\infty} \otimes \bigotimes_{i=1}^r \spin_{\psi_i}^{u_i(\psi)} \circ \tilde{\psi}_{i,\infty} \]
  of \(W_\R \times \SL_2(\C)\).
\end{thm}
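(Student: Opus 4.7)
The plan is to deduce the statement directly from \cite[Proposition 9.1]{arthur_unip} by rewriting Arthur's formula in the Shimura-theoretic language of the excerpt and translating his normalizations into Deligne's sign convention. Conceptually, no new input beyond what is already cited in the paragraphs immediately preceding the theorem is required: Arthur's endoscopic classification for $\Sp_{2g,\Q}$ \cite{arthurclassification}, the identification of cohomological packets at infinity with Adams-Johnson packets \cite{AMR}, the Vogan-Zuckerman formula for $(\mathfrak{sp}_{2g}, U(g))$-cohomology \cite[Proposition 6.19]{vz}, and the packaging of the multiplicity formula via $\GSpin$ lifts in \cite[\S 3.4 and \S 4.7]{taibi_ecAn}.

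First I would reduce from $\GSp_{2g}$ to $\Sp_{2g}$. As noted in the excerpt, the level-one restriction map $\mathcal{A}^2(G, \omega_\lambda^{-1})^{G(\Zhat)} \to \mathcal{A}^2(\Sp_{2g})^{\Sp_{2g}(\Zhat)}$ is a bijection, so \eqref{eq:def_g_K_coh} is identified with an $(\mathfrak{sp}_{2g}, U(g))$-cohomology group carrying an additional $\mathbb{S}_\C$-action coming from $\mu_h$ on $\gfrak/\kfrak = \pfrak^+ \oplus \pfrak^-$ and on $V_\lambda$. Arthur's classification then decomposes the discrete level-one spectrum as a sum indexed by $\psi \in \tPsidut(\Sp_{2g})$, and the resulting decomposition of \eqref{eq:def_g_K_coh} provides the decomposition $\bigoplus_\psi H_\psi$ claimed in the theorem.

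Next I would identify each $H_\psi$. By \cite{AMR}, the archimedean local packet attached to $\psi_\infty$ is the corresponding Adams-Johnson packet; each cohomological member $\pi_\infty$ is an $A_{\mathfrak{q}}(\lambda)$-module whose $(\mathfrak{sp}_{2g}, U(g))$-cohomology with coefficients in $V_\lambda$ is computed by \cite[Proposition 6.19]{vz} as an exterior algebra on a compact form of a Levi factor. Summing over the Adams-Johnson packet, weighted by the characters of the global component group that survive Arthur's multiplicity formula, collapses--by the centralizer-theoretic argument of \cite[\S 9]{arthur_unip}--into the tensor product
\[ \spin_{\psi_0} \circ \tilde{\psi}_{0,\infty} \otimes \bigotimes_{i=1}^r \spin_{\psi_i}^{\epsilon_i} \circ \tilde{\psi}_{i,\infty} \]
of representations of $W_\R \times \SL_2(\C)$, for certain signs $\epsilon_i \in \{\pm 1\}$. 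Composing with $\iota_\R$ then yields a $\C^\times = \mathbb{S}(\R)$-representation; the dual in the final answer accounts for Deligne's convention, under which $\mu_h$ acts on $\pfrak^+$ with weight $+1$, opposite to the cohomological grading implicit in Arthur's formulation. This last dualization, combined with the standard Borel-Casselman comparison \cite{borel-regularization,borelcasselman} used to set up \eqref{eq:def_g_K_coh}, converts Arthur's raw answer into the precise shape stated.

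The main obstacle is purely one of bookkeeping: (i) precisely matching Deligne's Hodge cocharacter convention with Arthur's, hence the appearance of the dual; (ii) verifying that the signs $\epsilon_i$ produced by Arthur's multiplicity formula on the Adams-Johnson packet coincide with $u_i(\psi)$ as defined in \cite[Theorems 4.7.2 and 7.1.3]{taibi_ecAn}, which requires matching symplectic root numbers against Kottwitz signs; and (iii) pinning down which of the two half-spin representations $\spin_{\psi_i}^{\pm}$ is meant, using the normalization recalled in the excerpt that $\spin_{\psi_i}^+ \circ \tilde{\psi}_{i,\infty} \circ \iota_\R$ contains a character $z \mapsto z^a \ol{z}^b$ with $a = |\tau_i|$. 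Once these three reconciliations are carried out, the theorem follows by direct transcription of Arthur's Proposition 9.1.
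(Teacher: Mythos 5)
Your proposal matches the paper's proof: the authors likewise obtain the theorem by paraphrasing \cite[Proposition 9.1]{arthur_unip} using Arthur's classification for $\Sp_{2g}$ and \cite{AMR}, decomposing the resulting representations into tensor products of (half-)spin representations exactly as in \cite[\S 4.7]{taibi_ecAn}, with the dual arising from Deligne's sign convention (replacing $r_\mu$ by $r_{-\mu}$). Your additional bookkeeping points (the level-one reduction to $\Sp_{2g}$, Vogan--Zuckerman, and the identification of the signs with $u_i(\psi)$) are exactly the ingredients the paper's surrounding discussion supplies.
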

\begin{proof}
  As explained above this formula is proved by paraphrasing the proof of \cite[Proposition 9.1]{arthur_unip} using Arthur's endoscopic classification for $\Sp_{2g}$ and \cite{AMR}, decomposing final representations into tensor products of (half-)spin representations exactly as in \cite[\S 4.7]{taibi_ecAn}.
  Note that we adopted Deligne's sign conventions so one has to replace the minuscule representation $r_\mu$ in \cite[\S 9]{arthur_unip} by its dual $r_{-\mu}$.
\end{proof}

\begin{rem} \label{rem:comp_Arthur_Kottwitz}
  Of course this is very similar to \cite[Theorem 4.7.2]{taibi_ecAn}, and indeed \cite[Proposition 9.1]{arthur_unip} is the Hodge analogue of Kottwitz' conjecture \cite{kottwitz} on $\ell$-adic \'etale intersection cohomology of minimal compactifications of Shimura varieties.
  To our knowledge, contrary to the case of compact Shimura varieties, there is no de Rham (or even just Hodge-Tate) $\ell$-adic comparison isomorphism relating the two.
  For this reason we do not yet know the Hodge-Tate weights of the Galois representations $\sigma_{\psi_i,\iota}^{\spin,u_i(\psi)}$ occurring in \cite[Theorem 7.1.3]{taibi_ecAn} (when $i>0$), except for those having dimension $2$ (when $\psi_i$ has dimension $4$) and $8$ (when $\psi_i$ has dimension $8$).
\end{rem}

\bibliographystyle{alpha}
\bibliography{database}

\end{document}